\def\volno{16}\fi
\def\volyear{2009}\fi
\def\papno{R00}\fi
\newtheorem{theorem}{Theorem}[section]
\newtheorem{lemma}[theorem]{Lemma}
\newtheorem{proposition}[theorem]{Proposition}
\newtheorem{corollary}[theorem]{Corollary}
\newtheorem{definition}[theorem]{Definition}
\newtheorem{example}[theorem]{Example}
\newtheorem{remark}[theorem]{Remark}
\DeclareMathOperator{\diam}{diam}
\newcommand{\spb}[1]{\smallskip}
\newcommand{\mpb}[1]{\medskip}
\newcommand{\bpb}[1]{\bigskip}
\renewcommand{\a}{\alpha}
\renewcommand{\d}{\delta}
\newcommand{\g}{\gamma}
\newcommand{\G}{\Gamma}
\newcommand{\s}{\sigma}
\begin{document}

\title{Gromov hyperbolicity in lexicographic product graphs}

\author[Walter Carballosa]{Walter Carballosa$^{(1)}$}
\address{National council of science and technology (CONACYT) $\&$ Autonomous University of Zacatecas,
Paseo la Bufa, int. Calzada Solidaridad, 98060 Zacatecas, ZAC, Mexico}
\email{waltercarb@gmail.com}
\thanks{$^{(1)}$ Supported in part by a grant from Ministerio de Econom{\'\i}a y Competitividad (MTM 2013-46374-P) Spain.}

\author[Amauris de la Cruz]{Amauris de la Cruz$^{(1)}$}
\address{Department of Mathematics, Universidad Carlos III de Madrid,
Avenida de la Universidad 30, 28911 Legan\'es, Madrid, Spain}
\email{alcruz@math.uc3m.es}

\author[Jos\'e M. Rodr{\'\i}guez]{Jos\'e M. Rodr{\'\i}guez$^{(1,2)}$}
\address{Department of Mathematics, Universidad Carlos III de Madrid,
Avenida de la Universidad 30, 28911 Legan\'es, Madrid, Spain}
\email{jomaro@math.uc3m.es}
\thanks{$^{(2)}$ Supported in part by a grant from CONACYT (CONACYT-UAG I0110/62/10), Mexico.}


\date{}

\maketitle{}

\begin{abstract}
If $X$ is a geodesic metric space and $x_1,x_2,x_3\in X$, a {\it
geodesic triangle} $T=\{x_1,x_2,x_3\}$ is the union of the three
geodesics $[x_1x_2]$, $[x_2x_3]$ and $[x_3x_1]$ in $X$. The space
$X$ is $\delta$-\emph{hyperbolic} $($in the Gromov sense$)$ if any side
of $T$ is contained in a $\delta$-neighborhood of the union of the two
other sides, for every geodesic triangle $T$ in $X$.
If $X$ is hyperbolic, we denote by
$\delta(X)$ the sharp hyperbolicity constant of $X$, i.e.
$\delta(X)=\inf\{\delta\ge 0: \, X \, \text{ is $\delta$-hyperbolic}\}.$
In this paper we characterize the lexicographic product of two graphs $G_1\circ G_2$ which are hyperbolic,
in terms of $G_1$ and $G_2$:
the lexicographic product graph $G_1\circ G_2$ is hyperbolic if and only if $G_1$ is hyperbolic, unless if $G_1$ is a trivial graph (the graph with a single vertex); if $G_1$ is trivial, then $G_1\circ G_2$ is hyperbolic if and only if $G_2$ is hyperbolic.
In particular, we obtain the sharp inequalities $\delta(G_1)\le \delta(G_1\circ G_2) \le \delta(G_1) + 3/2$ if $G_1$ is not a trivial graph, and we characterize the graphs for which the second inequality is attained.
\end{abstract}

{\it Keywords:}  Lexicographic product graphs;  geodesics; Gromov hyperbolicity;  infinite graphs.

{\it AMS Subject Classification numbers:}   05C69;  05A20; 05C50.


\section{Introduction}

Hyperbolic spaces play an important role in geometric
group theory and in the geometry of negatively curved
spaces (see \cite{ABCD, GH, G1}).
The concept of Gromov hyperbolicity grasps the essence of negatively curved
spaces like the classical hyperbolic space, Riemannian manifolds of
negative sectional curvature bounded away from $0$, and of discrete spaces like trees
and the Cayley graphs of many finitely generated groups. It is remarkable
that a simple concept leads to such a rich
general theory (see \cite{ABCD, GH, G1}).

The different kinds of products of graphs are an important research topic in graph theory, applied mathematics and computer science.
Some large graphs are composed from some existing smaller ones by using several products of graphs,
and many properties of such large graphs are strongly associated with that of the corresponding smaller ones.
In particular, the lexicographic product of graphs has been extensively investigated in relation to a wide range of subjects
(see, \emph{e.g.}, \cite{KPT,Po,SSUA,YX,ZLM} and the references therein).

The first works on Gromov hyperbolic spaces deal with
finitely generated groups (see \cite{G1}).
Initially, Gromov spaces were applied to the study of automatic groups in the science of computation
(see, \emph{e.g.}, \cite{O}); indeed, hyperbolic groups are strongly geodesically automatic, \emph{i.e.}, there is an automatic structure on the group \cite{Cha}.

The concept of hyperbolicity appears also in discrete mathematics, algorithms
and networking. For example, it has been shown empirically
in \cite{ShTa} that the internet topology embeds with better accuracy
into a hyperbolic space than into an Euclidean space
of comparable dimension; the same holds for many complex networks, see \cite{KPKVB}.
A few algorithmic problems in
hyperbolic spaces and hyperbolic graphs have been considered
in recent papers (see \cite{ChEs, Epp, GaLy, Kra}).
Another important
application of these spaces is the study of the spread of viruses through on the
internet (see \cite{K21,K22}).
Furthermore, hyperbolic spaces are useful in secure transmission of information on the
network (see \cite{K27,K21,K22,NS}).


If $X$ is a metric space we say that the curve $\g:[a,b]\longrightarrow X$ is a
\emph{geodesic} if we have $L(\g|_{[t,s]})=d(\g(t),\g(s))=|t-s|$ for every $s,t\in [a,b]$
(then $\gamma$ is equipped with an arc-length parametrization).
The metric space $X$ is said \emph{geodesic} if for every couple of points in
$X$ there exists a geodesic joining them; we denote by $[xy]$
any geodesic joining $x$ and $y$; this notation is ambiguous, since in general we do not have uniqueness of
geodesics, but it is very convenient.
Consequently, any geodesic metric space is connected.
If the metric space $X$ is
a graph, then the edge joining the vertices $u$ and $v$ will be denoted by $[u,v]$.

In order to consider a graph $G$ as a geodesic metric space, identify (by an isometry)
any edge $[u,v]\in E(G)$ with the interval $[0,1]$ in the real line;
then the edge $[u,v]$ (considered as a graph with just one edge)
is isometric to the interval $[0,1]$.
Thus, the points in $G$ are the vertices and, also, the points in the interior
of any edge of $G$.
In this way, any graph $G$ has a natural distance defined on its points, induced by taking shortest paths in $G$,
and we can see $G$ as a metric graph.
Throughout this paper, $G=(V,E)$ denotes a simple connected graph such that every edge has length $1$.
These properties guarantee that any graph is a geodesic metric space.
Note that to exclude multiple edges and loops is not an important loss of generality, since \cite[Theorems 8 and 10]{BRSV2} reduce the problem of compute
the hyperbolicity constant of graphs with multiple edges and/or loops to the study of simple graphs.

Consider a polygon $J=\{J_1,J_2,\dots,J_n\}$
with sides $J_j\subseteq X$ in a geodesic metric space $X$.
We say that $J$ is $\d$-{\it thin} if for
every $x\in J_i$ we have that $d(x,\cup_{j\neq i}J_{j})\le \d$.
Let us
denote by $\d(J)$ the sharp thin constant of $J$, \emph{i.e.},
$\d(J):=\inf\{\d\ge 0: \, J \, \text{ is $\d$-thin}\,\}\,. $
If $x_1,x_2,x_3$ are three points in $X$, a {\it geodesic triangle} $T=\{x_1,x_2,x_3\}$ is
the union of the three geodesics $[x_1x_2]$, $[x_2x_3]$ and
$[x_3x_1]$ in $X$.
We say that $X$ is $\d$-\emph{hyperbolic} if every geodesic
triangle in $X$ is $\d$-thin, and we denote by $\d(X)$ the sharp
hyperbolicity constant of $X$, \emph{i.e.}, $\d(X):=\sup\{\d(T): \, T \,
\text{ is a geodesic triangle in }\,X\,\}.$ We say that $X$ is
\emph{hyperbolic} if $X$ is $\d$-hyperbolic for some $\d \ge 0$; then $X$ is hyperbolic if and only if
$ \d(X)<\infty$. A geodesic \emph{bigon} is a geodesic triangle $\{x_1,x_2,x_3\}$ with $x_2=x_3$. Therefore, every bigon in a $\d$-hyperbolic geodesic metric space is $\d$-thin.

Trivially, any bounded
metric space $X$ is $(\diam X)$-hyperbolic.
A normed linear space is hyperbolic if and only if it has dimension one.
A geodesic space is $0$-hyperbolic if and only if it is a metric tree.
If a complete Riemannian manifold is simply connected and their sectional curvatures satisfy
$K\leq c$ for some negative constant $c$, then it is hyperbolic.
See the classical references \cite{ABCD,GH} in order to find more background and further results.

We want to remark that the main examples of hyperbolic graphs are the trees.
In fact, the hyperbolicity constant of a geodesic metric space can be viewed as a measure of
how ``tree-like'' the space is, since those spaces $X$ with $\delta(X) = 0$ are precisely the metric trees.
This is an interesting subject since, in
many applications, one finds that the borderline between tractable and intractable
cases may be the tree-like degree of the structure to be dealt with
(see, \emph{e.g.}, \cite{CYY}).

A main problem in the theory is to characterize in a simple way the hyperbolic graphs. Given a Cayley graph (of a presentation with solvable word problem)
there is an algorithm which allows to decide if it is hyperbolic.
However, for a general graph deciding whether or not a space is hyperbolic is a very difficult problem.
Therefore, it is interesting to study the hyperbolicity of particular classes of graphs.
The papers \cite{BRST,BHB1,CCCR,CRS2,CRSV,MRSV2,PeRSV,PRSV,R,Si,WZ} study the hyperbolicity of, respectively, complement of graphs, chordal graphs, strong product graphs, corona and join of graphs,
line graphs, Cartesian product graphs, cubic graphs, tessellation graphs, short graphs, median graphs and  $k$-chordal graphs.
In \cite{CCCR,CRS2,MRSV2} the authors characterize the hyperbolic product graphs (for strong product, corona and join of graphs, and Cartesian product) in terms of properties of their factor graphs.

The study of lexicographic product graphs is a subject of increasing interest (see, \emph{e.g.}, \cite{KPT,Po,SSUA,YX,ZLM} and the references therein).
In this paper we characterize the hyperbolic lexicographic product of two graphs $G_1\circ G_2$, in terms of $G_1$ and $G_2$:
if $G_1$ has at least two vertices, then $G_1\circ G_2$ is hyperbolic if and only if $G_1$ is hyperbolic; besides, if $G_1$ has a single vertex, then $G_1\circ G_2$ is hyperbolic if and only if $G_2$ is hyperbolic (see Theorem \ref{t:Caracterizacion 1} and Remark \ref{r:d_Trivial}).
We also prove the sharp inequalities $\delta(G_1)\le \delta(G_1\circ G_2) \le \delta(G_1) + 3/2$ if $G_1$ is not a trivial graph, see Theorems \ref{t:sublex} and \ref{t:CotaSup};
Example \ref{ex:Cn_P2} provides a family of graphs for which the first inequality is attained;
besides, Theorems \ref{th:equal+3/2} and \ref{th:hyp3/2_diam2} characterize the graphs for which the second inequality is attained.

Furthermore, we obtain the precise value of the hyperbolicity constant for many lexicographic products (see Examples \ref{ex:Pn_P2}, \ref{ex:Cn_P2} and Theorem \ref{t:tree_graph}).
In particular, Theorem \ref{t:tree_graph} allows to compute, in a simple way, the hyperbolicity constant of the lexicographic product of any tree and any graph.


\
\section{Distances in lexicographic products}

In order to estimate the hyperbolicity constant of the lexicographic product of two graphs $G_1$ and $G_2$, we must obtain bounds on the distances between any two arbitrary points in $G_1\circ G_2$. Besides, we study the geodesics in $G_1\circ G_2$, relating them with the geodesics in $G_1$. The lemmas of this section provide these results.

We will use the lexicographic product definition given in \cite{IK00}.

\begin{definition}\label{d:lexprod}
Let $G_1=(V(G_1),E(G_1))$ and $G_2=(V(G_2),E(G_2))$ be two graphs. The lexicographic product $G_1\circ G_2$ of $G_1$ and $G_2$ has $V(G_1) \times V(G_2)$ as vertex set, so that two distinct vertices $(u_1,v_1)$ and $(u_2,v_2)$ of $G_1\circ G_2$ are adjacent if either $[u_1,u_2]\in E(G_1)$, or $u_1=u_2$ and $[v_1,v_2]\in E(G_2)$.
\end{definition}

Note that the lexicographic product of two graphs is not always commutative. We use the notation $(x,y)$ for the points of the graph $G_1 \circ G_2$ with $x \in V(G_1)$ or $y \in V(G_2)$. Otherwise, this notation can be ambiguous.

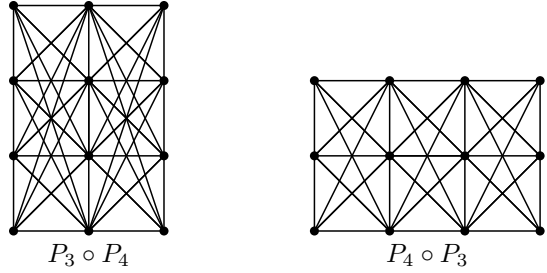
\begin{figure}[h]
\centering
\scalebox{1}
{\begin{pspicture}(-0.2,-0.7)(7.2,3.2)
\psline[linewidth=0.02cm,dotsize=0.07055555cm 2.5]{*-*}(0,0)(0,1)
\psline[linewidth=0.02cm,dotsize=0.07055555cm 2.5]{*-*}(0,2)(0,3)
\psline[linewidth=0.02cm,dotsize=0.07055555cm 2.5]{*-*}(1,0)(1,3)
\psline[linewidth=0.02cm,dotsize=0.07055555cm 2.5]{*-*}(1,1)(2,1)
\psline[linewidth=0.02cm,dotsize=0.07055555cm 2.5]{*-*}(1,2)(2,2)
\psline[linewidth=0.02cm,dotsize=0.07055555cm 2.5]{*-*}(2,0)(2,3)
\psline[linewidth=0.02cm,dotsize=0.07055555cm 2.5]{-}(0,0)(1,1)(0,2)(1,3)(0,3)(1,2)(0,1)(1,0)(0,0)
\psline[linewidth=0.02cm,dotsize=0.07055555cm 2.5]{-}(1,0)(2,1)(1,2)(2,3)(1,3)(2,2)(1,1)(2,0)(1,0)
\psline[linewidth=0.02cm,dotsize=0.07055555cm 2.5]{-}(0,1)(1,1)(1,2)(0,2)(0,1)
\psline[linewidth=0.02cm,dotsize=0.07055555cm 2.5]{-}(0,0)(1,1)(2,2)(1,3)(0,2)(1,1)(2,0)
\psline[linewidth=0.02cm,dotsize=0.07055555cm 2.5]{-}(0,3)(1,2)(2,1)(1,0)(0,1)(1,2)(2,3)
\psline[linewidth=0.02cm,dotsize=0.07055555cm 2.5]{-}(0,0)(1,2)(2,0)
\psline[linewidth=0.02cm,dotsize=0.07055555cm 2.5]{-}(0,2)(1,0)(2,2)
\psline[linewidth=0.02cm,dotsize=0.07055555cm 2.5]{-}(0,1)(1,3)(2,1)
\psline[linewidth=0.02cm,dotsize=0.07055555cm 2.5]{-}(0,3)(1,1)(2,3)
\psline[linewidth=0.02cm,dotsize=0.07055555cm 2.5]{-}(0,0)(1,3)(2,0)
\psline[linewidth=0.02cm,dotsize=0.07055555cm 2.5]{-}(0,3)(1,0)(2,3)
\uput[270](1,0){$P_3\circ P_4$}
\psline[linewidth=0.02cm,dotsize=0.07055555cm 2.5]{*-*}(4,0)(5,0)
\psline[linewidth=0.02cm,dotsize=0.07055555cm 2.5]{*-*}(6,0)(7,0)
\psline[linewidth=0.02cm,dotsize=0.07055555cm 2.5]{*-*}(4,1)(7,1)
\psline[linewidth=0.02cm,dotsize=0.07055555cm 2.5]{*-*}(5,1)(5,2)
\psline[linewidth=0.02cm,dotsize=0.07055555cm 2.5]{*-*}(6,1)(6,2)
\psline[linewidth=0.02cm,dotsize=0.07055555cm 2.5]{*-*}(4,2)(7,2)
\psline[linewidth=0.02cm,dotsize=0.07055555cm 2.5]{-}(4,0)(5,1)(6,0)(7,1)(7,0)(6,1)(5,0)(4,1)(4,0)
\psline[linewidth=0.02cm,dotsize=0.07055555cm 2.5]{-}(4,1)(5,2)(6,1)(7,2)(7,1)(6,2)(5,1)(4,2)(4,1)
\psline[linewidth=0.02cm,dotsize=0.07055555cm 2.5]{-}(5,0)(5,1)(6,1)(6,0)(5,0)
\psline[linewidth=0.02cm,dotsize=0.07055555cm 2.5]{-}(4,0)(5,1)(6,2)(7,1)(6,0)(5,1)(4,2)
\psline[linewidth=0.02cm,dotsize=0.07055555cm 2.5]{-}(7,0)(6,1)(5,2)(4,1)(5,0)(6,1)(7,2)
\psline[linewidth=0.02cm,dotsize=0.07055555cm 2.5]{-}(4,0)(5,2)(6,0)(7,2)
\psline[linewidth=0.02cm,dotsize=0.07055555cm 2.5]{-}(4,2)(5,0)(6,2)(7,0)
\uput[270](5.5,0){$P_4\circ P_3$}
\end{pspicture}}
\caption{Non commutative lexicographic product of two graphs ($P_3\circ P_4 \not\simeq P_4\circ P_3$).} \label{fig:LexProd}
\end{figure}

\begin{remark}\label{r:remark1}
The Cartesian and the strong product of two graphs are subgraphs of the lexicographic product of two graphs, \emph{i.e.}, $G_1 \Box G_2\subseteq G_1 \boxtimes G_2\subseteq G_1\circ G_2$.
\end{remark}

Along this work by \emph{trivial graph} we mean a graph having just a single vertex, and we denote it by $E_1$.
If $G_1$ and $G_2$  are isomorphic, then we write $G_1 \simeq G_2$.

\begin{remark}\label{r:remark2}
Let $G$ be any graph. Then $G \circ E_1 \simeq G$ and $E_1 \circ G \simeq G$.
\end{remark}

In what follows we denote by $\pi$ the projection $\pi:G_1\circ G_2\rightarrow G_1$. The following result allows to compute the distance between any two vertices of $G_1\circ G_2$.

\begin{lemma}\label{l:dist}
 Let $G_1$ be a non-trivial graph and $G_2$ any graph and $(u,v)$, $(u',v')$ two vertices in $G_1\circ G_2$. Then
  \[d_{G_1\circ G_2}((u,v),(u',v'))=\left\{
\begin{array}{ll}
\min\{2,d_{G_2}(v,v')\},\quad &\mbox{if }\ u=u',\\

d_{G_1}(u,u'),\quad &\mbox{if }\ u \neq u'.
\end{array}
\right.
\]
 \end{lemma}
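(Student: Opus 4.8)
The key observation is that distances in $G_1 \circ G_2$ are controlled by the adjacency structure inherited from $G_1$: whenever the first coordinates differ, movement between "$G_1$-layers" is governed entirely by edges of $G_1$, and the second coordinate is irrelevant for adjacency. This suggests proving that the projection $\pi$ is distance-nonincreasing and, more precisely, that it behaves isometrically in the case $u \neq u'$, while the case $u = u'$ requires a separate argument capped at distance $2$.

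First I would handle the case $u \neq u'$. I would show $d_{G_1 \circ G_2}((u,v),(u',v')) = d_{G_1}(u,u')$ by two inequalities. For the upper bound, I would lift a geodesic $u = w_0, w_1, \dots, w_k = u'$ in $G_1$ (with $k = d_{G_1}(u,u')$) to a path in $G_1 \circ G_2$; since consecutive $w_i, w_{i+1}$ are adjacent in $G_1$, the vertices $(w_i, \cdot)$ and $(w_{i+1}, \cdot)$ are adjacent in $G_1 \circ G_2$ regardless of the chosen second coordinates, giving a walk of length $k$, hence $d_{G_1 \circ G_2} \le d_{G_1}(u,u')$. For the reverse inequality, I would observe that any edge of $G_1 \circ G_2$ projects under $\pi$ either to an edge of $G_1$ or to a single vertex (the latter when the first coordinates coincide), so $\pi$ cannot increase distances; thus $d_{G_1}(u,u') = d_{G_1}(\pi(u,v),\pi(u',v')) \le d_{G_1 \circ G_2}((u,v),(u',v'))$.

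Next I would treat the case $u = u'$, establishing $d_{G_1 \circ G_2}((u,v),(u,v')) = \min\{2, d_{G_2}(v,v')\}$. The upper bound by $d_{G_2}(v,v')$ follows by lifting a $G_2$-geodesic within the single layer $\{u\} \times V(G_2)$, using the second adjacency clause of Definition~\ref{d:lexprod}. The upper bound by $2$ is the crucial point where nontriviality of $G_1$ is used: since $G_1$ has at least two vertices, $u$ has a neighbor $u'' \in V(G_1)$, so the path $(u,v) \to (u'',v) \to (u,v')$ has length $2$ (both steps use edges of $G_1$ in the first coordinate). For the matching lower bound, I would argue that when the two points are distinct (so $d_{G_2}(v,v') \ge 1$) any path between them has length at least $1$, and length exactly $1$ forces $[v,v'] \in E(G_2)$, i.e. $d_{G_2}(v,v') = 1$; combining this with the two upper bounds yields equality with $\min\{2, d_{G_2}(v,v')\}$.

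I expect the main obstacle to be the lower bound in the case $u = u'$, specifically confirming that the distance cannot drop below $\min\{2, d_{G_2}(v,v')\}$ when $d_{G_2}(v,v') \ge 2$. The subtlety is that a shortest path might leave the layer $\{u\} \times V(G_2)$ and return; I would need to rule out a path of length $1$ (which would require direct adjacency, forcing $d_{G_2}(v,v') \le 1$, a contradiction) and thereby conclude the distance is at least $2$. Once this is pinned down, assembling the two cases gives the stated formula.
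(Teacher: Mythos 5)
Your proposal is correct and follows essentially the same route as the paper: for $u\neq u'$ you lift a $G_1$-geodesic for the upper bound and use that $\pi$ is distance-nonincreasing for the lower bound (the paper phrases this as projecting a hypothetical shorter geodesic to get a contradiction), and for $u=u'$ you use the detour through a neighboring layer plus the observation that non-adjacency forces distance at least $2$, which matches the paper's remark that any path leaving $\{u\}\circ G_2$ has length at least $2$.
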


 \begin{proof}
 Assume first that $u=u'$, thus $(u,v), (u,v') \in V\big(\{u\} \circ G_2\big)$. If $d_{G_2}(v,v') \leq 2$ then
 $d_{G_1\circ G_2}\big( (u,v),(u,v')\big)=d_{G_2}(v,v')$ since a path in $G_1\circ G_2$ joining $(u,v)$ and $(u,v')$ which is not contained in $\{u\} \circ G_2$ has a vertex out of $\{u\} \circ G_2$, and so, its length is at least $2$.
 If $d_{G_2}(v,v') > 2$ then
 $$d_{G_1\circ G_2}((u,v),(u,v'))= d_{G_1\circ G_2}((u,v),\{w\} \circ G_2) + d_{G_1\circ G_2}(\{w\} \circ G_2,(u,v'))=2,$$
 where $[u,w] \in E(G_1)$.

Assume now that $u\neq u'$. If $\g:=[uu']$  is a geodesic in $G_1$ joining the points $u$ and $u'$ with $L(\g)=k$, then there exist vertices $A_{1},\ldots ,A_{k-1}$ in $\g \setminus \{u,u'\}$. Without loss of generality we can assume that $\g$ meets $A_{1},\ldots,A_{k-1}$ in this order. If we fix $v_0\in V(G_2)$, then
 $$d_{G_1\circ G_2}((u,v),(u',v'))\leq d_{G_1\circ G_2}((u,v),(A_1,v_0)) + \ldots + d_{G_1\circ G_2}((A_{k-1},v_0),(u',v')) = k.$$

   If $d_{G_1\circ G_2}((u,v) , (u',v')) < k$, then there exists a geodesic $\G$ in $G_1\circ G_2$ joining $(u,v)$ and $(u',v')$ with $L(\G) = r < k$.
   Denote by $B_{1},\ldots,B_{r-1}$ the vertices in $\G \setminus \{(u,v),(u',v')\}$. Without loss of generality we can assume that $\G$ meets $B_{1},\ldots,B_{r-1}$ in this order.
   Then we have
   $$\G:= [(u,v),B_{1}] \bigcup \left\{ \bigcup_{j=1}^{r-2} [B_{j}, B_{j+1}]\right\} \bigcup [ B_{r-1},(u',v')].$$
   By Definition \ref{d:lexprod},
   $$\g_1:= [u,\pi(B_{1})] \bigcup \left\{ \bigcup_{j=1}^{r-2} [\pi(B_{j}),\pi(B_{j+1})]\right\} \bigcup [\pi(B_{r-1}),u']$$
   is a path joining $u$ and $u'$ in $G_1$ such that $L(\g_1) \leq L(\G) < L(\g)$.
   This is a contradiction, thus
   $$d_{G_1\circ G_2}((u,v),(u',v'))=d_{G_1}(u,u').$$
 \end{proof}

Let $X$ be a metric space, $Y$ a non-empty subset of $X$ and $\varepsilon$ a positive number.
We call $\varepsilon$-\emph{neighborhood}
of $Y$ in $X$, denoted by $\mathcal{V}_{\varepsilon}(Y)$ to the set $\{x\in X: d_X(x,Y)\leq\varepsilon\}$.

\begin{lemma}\label{l:vecindad}
 Let  $G_1$ be a non-trivial graph and $G_2$ any  graph. Then $G_1 \circ G_2 \subseteq \mathcal{V}_{3/2}{(G_1 \circ \{v\})}$ for every $v \in V(G_2)$.
\end{lemma}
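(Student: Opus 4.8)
The plan is to reduce the statement to vertices and then pass to interior points of edges by a one-line triangle-inequality argument. The crucial observation is that, although a vertex $(u,w)$ may sit at distance $2$ from the vertex $(u,v)$ of its own fiber when $w$ and $v$ are far apart in $G_2$ (here the naive bound $d_{G_1\circ G_2}((u,w),(u,v))=\min\{2,d_{G_2}(w,v)\}$ only gives $2$), one can instead reach the slice $G_1\circ\{v\}$ through a \emph{neighboring} fiber of $G_1$, where the distance collapses to a $G_1$-distance and stays bounded by $1$ independently of $G_2$.

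First I would prove that every vertex of $G_1\circ G_2$ lies within distance $1$ of $G_1\circ\{v\}$. Given a vertex $(u,w)$, since $G_1$ is a non-trivial connected graph the vertex $u$ has at least one neighbor $u'\in V(G_1)$, i.e. $[u,u']\in E(G_1)$. As $u\neq u'$, Lemma \ref{l:dist} yields
\[
d_{G_1\circ G_2}\big((u,w),(u',v)\big)=d_{G_1}(u,u')=1,
\]
and $(u',v)\in V\big(G_1\circ\{v\}\big)$. Hence $d_{G_1\circ G_2}\big((u,w),G_1\circ\{v\}\big)\le 1$ for every vertex $(u,w)$, uniformly in the $G_2$-coordinate $w$.

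Next I would treat an arbitrary point $p\in G_1\circ G_2$. Either $p$ is a vertex, already covered by the previous step, or $p$ lies in the interior of some edge $[a,b]$ with $a,b\in V(G_1\circ G_2)$. Since that edge has length $1$, one of its endpoints, say $a$, satisfies $d_{G_1\circ G_2}(p,a)\le 1/2$. Combining this with the vertex bound and the triangle inequality gives
\[
d_{G_1\circ G_2}\big(p,G_1\circ\{v\}\big)\le d_{G_1\circ G_2}(p,a)+d_{G_1\circ G_2}\big(a,G_1\circ\{v\}\big)\le \tfrac12+1=\tfrac32,
\]
which is exactly the asserted inclusion $p\in\mathcal{V}_{3/2}\big(G_1\circ\{v\}\big)$.

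I do not anticipate a genuine obstacle: the argument is short once the right slice vertex is chosen. The only point requiring care is precisely the case where $w$ is far from $v$ in $G_2$, and this is where the non-triviality of $G_1$ is indispensable—it guarantees a neighbor $u'$ of $u$, allowing us to route to the slice through an adjacent fiber at cost $1$ rather than being forced to stay inside the fiber at cost up to $2$. This is what makes both the vertex bound $1$ and the overall bound $3/2$ hold uniformly for every graph $G_2$.
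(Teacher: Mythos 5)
Your proof is correct and follows essentially the same route as the paper: bound vertices by $1$ via an adjacent fiber $(u',v)$ (the paper cites Definition \ref{d:lexprod} directly where you cite Lemma \ref{l:dist}, but the content is identical), then extend to interior points of edges by adding $1/2$ via the triangle inequality. No gaps.
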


\begin{proof}
Let $p$ be any point of $G_1 \circ G_2$. If $p \in V(G_1 \circ G_2)$, then consider any $u_0\in V(G_1)$ such that $[\pi(p),u_0]\in E(G_1)$. Definition \ref{d:lexprod} gives $d_{G_1 \circ G_2}(p,G_1\circ\{v\})\leq d_{G_1 \circ G_2}(p,(u_0,v)) =1$ for every $v \in V(G_2)$ since $G_1$ is non-trivial.
Assume that  $p \notin V(G_1 \circ G_2)$. Let $A\in V(G_1 \circ G_2)$ with $d_{G_1 \circ G_2}(p,A)\leq 1/2$. Hence, we have
$$d_{G_1 \circ G_2}(p,G_1 \circ \{v\}) \leq d_{G_1 \circ G_2}(p, A) + d_{G_1 \circ G_2}(A, G_1 \circ \{v\}) \leq 3/2.$$
\end{proof}

\begin{lemma} \label{l:geodesicas}
Let $y_1, y_2$ be any points in $G_2$ with $d_{G_2}(y_1,y_2) \leq 5/2$ and  $x_0$ a fixed vertex in $G_1$. Then $\g:=\{x_0\} \times [y_1 y_2]$ is a geodesic in $G_1 \circ G_2$ joining the points $(x_0,y_1)$ and $(x_0,y_2)$.
\end{lemma}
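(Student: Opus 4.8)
The plan is to prove the stronger metric statement that
$d_{G_1\circ G_2}\big((x_0,y_1),(x_0,y_2)\big)=d_{G_2}(y_1,y_2)$. Since the curve
$\g=\{x_0\}\times[y_1y_2]$ lies in the fiber over $x_0$ and has length exactly
$d_{G_2}(y_1,y_2)$ --- the map $y\mapsto (x_0,y)$ sends each edge of $G_2$ isometrically onto
a ``fiber'' edge of $G_1\circ G_2$, because $[v,v']\in E(G_2)$ iff $(x_0,v)$ and $(x_0,v')$
are adjacent (Definition \ref{d:lexprod}) --- this equality immediately gives that $\g$ is a
geodesic. If $G_1$ is trivial there are no other edges and the claim is obvious, so I would
assume $G_1$ non-trivial and fix a neighbor $u$ of $x_0$ in $G_1$. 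I would also record two
easy facts: the projection $\pi$ is $1$-Lipschitz (it collapses every fiber edge to a point
and maps every ``cross'' edge isometrically onto an edge of $G_1$), and every point of $G_2$
lies within distance $1/2$ of $V(G_2)$, so $m_i:=d_{G_2}(y_i,V(G_2))\le 1/2$ for $i=1,2$.

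The curve $\g$ already yields $d_{G_1\circ G_2}\big((x_0,y_1),(x_0,y_2)\big)\le d_{G_2}(y_1,y_2)$,
so it remains to bound from below by $d_{G_2}(y_1,y_2)$ an arbitrary path $\G$ joining
$(x_0,y_1)$ and $(x_0,y_2)$. I would split into two cases according to whether $\G$ stays in
the fiber over $x_0$. If it does, $\G$ projects (via the second coordinate) to a path in $G_2$
from $y_1$ to $y_2$ of the same length, hence $L(\G)\ge d_{G_2}(y_1,y_2)$. If $\G$ leaves the
fiber, the key observation is that $\G$ can pass between the fiber over $x_0$ and another fiber
only through vertices $(x_0,w)$, $w\in V(G_2)$, traversing full cross edges; since $\pi\circ\G$
is then a loop at $x_0$ in $G_1$ that reaches a vertex $\ne x_0$, the total cross-edge length of
$\G$ is at least $2$. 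Writing $w_1$ (resp. $w_2$) for the first (resp. last) fiber vertex from
which $\G$ crosses to another fiber, the prefix of $\G$ before $w_1$ and the suffix after $w_2$
stay in the fiber and travel from $y_1$ to $w_1$ and from $w_2$ to $y_2$; these are disjoint
from the cross edges, so $L(\G)\ge m_1+m_2+2$.

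The main obstacle is the final, sharp inequality
$$m_1+m_2+2\ge d_{G_2}(y_1,y_2)\qquad\text{under the hypothesis } d_{G_2}(y_1,y_2)\le 5/2,$$
which is exactly where the constant $5/2$ enters; it can fail once $d_{G_2}(y_1,y_2)>5/2$, which
is precisely why a fiber geodesic may cease to be a geodesic of the product beyond that distance.
To prove it I would take a geodesic $[y_1y_2]$ in $G_2$ and let $a_i\in[0,1)$ be the distance from
$y_i$ to the first vertex met along $[y_1y_2]$, so that $d_{G_2}(y_1,y_2)=a_1+N+a_2$ with
$N\in\mathbb{Z}_{\ge 0}$ and $m_i=\min(a_i,1-a_i)$. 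The hypothesis forces $N\le 2$, and a short
case analysis on $N\in\{0,1,2\}$ (using $a_1+a_2\le 3/2$ when $N=1$ and $a_1+a_2\le 1/2$ when
$N=2$, both consequences of $d_{G_2}(y_1,y_2)\le 5/2$) gives $m_1+m_2+2\ge a_1+a_2+N$ in every
case, with equality exactly at the threshold. Combining the two cases yields
$d_{G_1\circ G_2}\big((x_0,y_1),(x_0,y_2)\big)=d_{G_2}(y_1,y_2)$, and hence that $\g$ is a geodesic
joining $(x_0,y_1)$ and $(x_0,y_2)$.
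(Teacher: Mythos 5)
Your proof is correct and follows essentially the same route as the paper's: both arguments reduce to showing that any competing path leaving the fiber $\{x_0\}\circ G_2$ has length at least $(\text{distance from } y_1 \text{ to a vertex of } G_2) + 2 + (\text{distance from a vertex to } y_2)$, which is at least $d_{G_2}(y_1,y_2)$ once $d_{G_2}(y_1,y_2)\le 5/2$; the paper packages this as a contradiction using the triangle inequality through the exit and entry vertices $B_1,B_2$ together with $d_{G_2}(B_1,B_2)\le 2$, whereas you verify the equivalent inequality $m_1+m_2+2\ge d_{G_2}(y_1,y_2)$ by a direct case analysis on $N$. One cosmetic point: the claim that $\Gamma$ traverses full cross edges requires taking $\Gamma$ to be a geodesic (as the paper does) rather than a genuinely arbitrary path, which could backtrack inside a cross edge; this is handled by the standard reduction to shortest paths when bounding the distance from below.
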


\begin{proof}
If $G_1$ is the trivial graph, then $G_1\circ G_2 \simeq G_2$ and we have the result. Assume that $G_1$ is a non-trivial graph.
Seeking for a contradiction assume that $\g$ is not a geodesic in $G_1 \circ G_2$.
Therefore, there is a geodesic $\G$ in $G_1 \circ G_2$ joining $(x_0,y_1)$ and $(x_0,y_2)$ which is not contained in $\{x_0\} \circ G_2$. Hence, $\G$ has a vertex $V$ outside of $\{x_0\} \circ G_2$; thus, we have $2\le L(\G)< L(\g)\le 5/2$. We have
$$\G=[(x_0,y_1) (x_0,B_1)]\cup[(x_0,B_1),V]\cup[V,(x_0,B_2)]\cup[(x_0,B_2)(x_0,y_2)],$$
where $B_i$ is a closest vertex to $y_i$ in $G_2$, for $i=1,2$. Since $\g\cup\G$ contains a cycle $C$ with $(x_0,B_1),(x_0,B_2)\in C$ and  $L(\g) + L(\G) < 5$ we have $L(C)\leq 4$ and $d_{G_2}(B_1,B_2)\le 2$, and so, we obtain
\begin{eqnarray*}
d_{G_2}(y_1,y_2)\!\!\!\! &\le&\!\!\!\! d_{G_2}(y_1,B_1) + d_{G_2}(B_1,B_2) + d_{G_2}(B_2,y_2)\\
 &\le& \!\!\!\!  d_{G_2}(y_1,B_1) + 2 + d_{G_2}(B_2,y_2) = L(\G) < L(\g)=d_{G_2}(y_1,y_2).
\end{eqnarray*}
This is the contradiction we were looking for, and so, $\g$ is a geodesic in $G_1 \circ G_2$.
\end{proof}

\begin{corollary}\label{c:geod3}
Let  $G_1$ be a non-trivial graph and $G_2$ any  graph, $y_1, y_2$ any points in $G_2$ with $d_{G_2}(y_1,y_2) > 3$ and  $x_0$ a fixed vertex in $G_1$.
Then  $\{x_0\} \times [y_1 y_2]$ is not a geodesic in $G_1 \circ G_2$.
\end{corollary}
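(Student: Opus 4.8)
The plan is to prove the contrapositive of Corollary \ref{c:geod3} by leveraging the distance formula from Lemma \ref{l:dist}. Suppose, seeking a contradiction, that $\gamma := \{x_0\} \times [y_1 y_2]$ \emph{is} a geodesic in $G_1 \circ G_2$ while $d_{G_2}(y_1,y_2) > 3$. Since both endpoints $(x_0,y_1)$ and $(x_0,y_2)$ share the same first coordinate $u=u'=x_0$, Lemma \ref{l:dist} immediately gives
\[
d_{G_1 \circ G_2}\big((x_0,y_1),(x_0,y_2)\big) = \min\{2, d_{G_2}(y_1,y_2)\} = 2,
\]
because $d_{G_2}(y_1,y_2) > 3 > 2$ and $G_1$ is non-trivial. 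Therefore any genuine geodesic joining these two points has length exactly $2$.

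On the other hand, if $\gamma = \{x_0\} \times [y_1 y_2]$ were a geodesic, its length would equal $d_{G_1\circ G_2}\big((x_0,y_1),(x_0,y_2)\big) = 2$. But $\gamma$ lies entirely inside the copy $\{x_0\} \circ G_2$, where the induced metric coincides with $d_{G_2}$ along the path $[y_1 y_2]$; hence $L(\gamma) = d_{G_2}(y_1,y_2) > 3$. This contradicts $L(\gamma) = 2$, and the contradiction establishes the claim. In short, the heart of the argument is that within a single fiber the lexicographic-product distance saturates at $2$, so no path confined to that fiber can be geodesic once the $G_2$-distance of its endpoints exceeds $2$.

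The only point requiring a little care is confirming that $L(\gamma) = d_{G_2}(y_1,y_2)$: the curve $\gamma$ is the image of a geodesic $[y_1 y_2]$ of $G_2$ under the isometric embedding $y \mapsto (x_0,y)$ of the fiber, and each unit edge of $[y_1 y_2]$ maps to an edge of $\{x_0\}\circ G_2$ of length $1$, so arclength is preserved and $L(\gamma) = L([y_1 y_2]) = d_{G_2}(y_1,y_2)$. I do not anticipate any real obstacle here; the corollary is essentially a direct reading of Lemma \ref{l:dist} applied to the case $u=u'$, and it serves as a sharp companion to Lemma \ref{l:geodesicas}, which guaranteed geodesicity in the regime $d_{G_2}(y_1,y_2)\le 5/2$. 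The gap $(5/2, 3]$ is left genuinely ambiguous, which is why the threshold in this corollary is stated as the strict inequality $d_{G_2}(y_1,y_2) > 3$.
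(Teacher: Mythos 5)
There is a genuine gap: you apply Lemma \ref{l:dist} to the points $(x_0,y_1)$ and $(x_0,y_2)$, but that lemma is stated (and proved) only for \emph{vertices} of $G_1\circ G_2$, whereas the corollary allows $y_1,y_2$ to be arbitrary points of $G_2$, including interior points of edges. For non-vertex points the formula $d_{G_1\circ G_2}\big((x_0,y_1),(x_0,y_2)\big)=\min\{2,d_{G_2}(y_1,y_2)\}$ is simply false: any path leaving the fiber $\{x_0\}\circ G_2$ must first travel from $y_1$ to its nearest vertex $B_1$ and, at the end, from $B_2$ to $y_2$, so the distance can be as large as $\tfrac12+1+1+\tfrac12=3$. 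Indeed Remark \ref{r:midpoints} in the paper exhibits exactly this: two midpoints at $G_2$-distance $3$ in the same fiber are at distance $3$ in $G_1\circ G_2$, not $2$. Your closing observation that the threshold is $>3$ rather than $>2$ is a symptom of this inconsistency --- under your own argument the corollary would hold already for $d_{G_2}(y_1,y_2)>2$, which is false.

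The paper's proof avoids this by not invoking Lemma \ref{l:dist} at all for the endpoints: it explicitly builds the competitor path
$\G:=[(x_0,y_1)(x_0,B_1)]\cup[(x_0,B_1),(u_0,v_0)]\cup[(u_0,v_0),(x_0,B_2)]\cup[(x_0,B_2)(x_0,y_2)]$
through a neighboring fiber, where $B_i$ is the vertex of $G_2$ nearest to $y_i$, and bounds its length by $\tfrac12+1+1+\tfrac12=3<d_{G_2}(y_1,y_2)=L(\{x_0\}\times[y_1y_2])$. Your argument is correct in the special case where $y_1,y_2$ are vertices (and then already $d_{G_2}(y_1,y_2)>2$ suffices), but to cover the general case you need the explicit path construction, or at least an extension of the distance formula to non-vertex points that accounts for the extra two half-edges.
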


\begin{proof}
Let $B_i$ be the closest vertex to $y_i$ in $G_2$, for $i=1,2$. Since $G_1$ is a non-trivial graph there is a vertex $u_0\in V(G_1)$ such that $[x_0,u_0]\in E(G_1)$. For any fixed $v_0\in V(G_2)$ we have
$$\G:=[(x_0,y_1) (x_0,B_1)]\cup[(x_0,B_1),(u_0,v_0)]\cup[(u_0,v_0),(x_0,B_2)]\cup[(x_0,B_2)(x_0,y_2)]$$
is a path in $G_1\circ G_2$ joining $(x_0,y_1)$ and $(x_0,y_2)$. Besides, since $d_{G_2}(y_1,B_1)\le 1/2$ and $d_{G_2}(y_2,B_2)\le 1/2$ we have $L(\G)\le 3 < d_{G_2}(y_1,y_2)=L(\{x_0\}\times [y_1y_2])$.
\end{proof}

\begin{remark}\label{r:midpoints}
Let $y_1, y_2$ be two midpoints in any graph $G_2$ with $d_{G_2}(y_1,y_2) = 3$ and  $x_0$ a fixed vertex in any graph $G_1$.
Then  $\{x_0\} \times [y_1 y_2]$ is a geodesic in $G_1 \circ G_2$ joining $(x_0,y_1)$ and $(x_0,y_2)$.
 \end{remark}

\begin{lemma}\label{l:+3vert}
Let  $G_1$ be a non-trivial graph and $G_2$ be any  graph. If $\g$ is a geodesic in $G_1 \circ G_2$ joining $x$ and $y$ with $L(\g) > 3$, then $\pi(\g)$ contains at least three vertices in $G_1$.

Furthermore, if $\s$ is a path in $G_1 \circ G_2$ joining $x$ and $y$, then $\pi(\s)$ contains at least three vertices in $G_1$.
\end{lemma}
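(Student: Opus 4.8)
The plan is to establish a single statement from which both assertions follow at once:
\emph{if $d_{G_1\circ G_2}(x,y)>3$, then for every path $\s$ in $G_1\circ G_2$ joining $x$ and $y$ the set $\pi(\s)$ contains at least three vertices of $G_1$.} The first conclusion is the special case $\s=\g$, since for a geodesic $L(\g)=d_{G_1\circ G_2}(x,y)>3$; the ``furthermore'' part is exactly this statement, because the points $x,y$ are those of the first part and hence satisfy $d_{G_1\circ G_2}(x,y)=L(\g)>3$. I would then argue by contradiction: assume $\pi(\s)$ contains at most two vertices of $G_1$ and deduce $d_{G_1\circ G_2}(x,y)\le 3$.

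First I would analyze $Z:=\pi(\s)$. Since $\s$ is connected and $\pi$ is continuous, $Z$ is connected; as a connected subset of a graph cannot join two non-adjacent vertices without passing through a third, the hypothesis on $Z$ forces one of two pictures: either $Z$ reaches only a single vertex $u_0$ (plus partial edges issuing from it), or it reaches two adjacent vertices $u_0,u_1$ and is contained in the edge $[u_0,u_1]$ together with some partial edges issuing from $u_0$ and from $u_1$. In all cases every point of $Z$ lies within $G_1$-distance $1$ of $\{u_0,u_1\}$. Consequently $x,y\in\pi^{-1}(Z)$, and it suffices to bound the diameter of $\pi^{-1}(Z)$ in $G_1\circ G_2$.

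The key tool is Lemma~\ref{l:dist}. Each of $x,y$ is an interior point (or an endpoint) of an edge of $G_1\circ G_2$ whose projection lies in $Z$, so it has an endpoint-vertex lying in a layer $\{u_0\}\circ G_2$ or $\{u_1\}\circ G_2$, at some distance $t_x$ (resp.\ $t_y$) with $0\le t_x,t_y\le 1$; call these the \emph{anchors} $A_x,A_y$. By Lemma~\ref{l:dist} any two vertices whose projections lie in $\{u_0,u_1\}$ are at distance at most $2$ (the projections are either equal, giving $\min\{2,\cdot\}\le 2$, or they are the adjacent pair $u_0,u_1$, giving $d_{G_1}(u_0,u_1)=1$). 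Routing through the anchors thus gives $d_{G_1\circ G_2}(x,y)\le t_x+2+t_y$, which already yields the bound $3$ whenever $t_x+t_y\le 1$.

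The remaining and main difficulty is when $x$ or $y$ lies \emph{deep} inside a horizontal edge, i.e.\ $t_x>1/2$ or $t_y>1/2$, for then the anchor estimate only gives $4$. Here I would reroute. A point lying at parameter $t>1/2$ on a horizontal edge issuing from $u_i\in\{u_0,u_1\}$ toward a neighbor $u_i'$ is at distance $1-t<1/2$ from a vertex $B$ of the adjacent layer $\{u_i'\}\circ G_2$. A short case analysis — according to whether the two anchors lie in a common layer or in the two distinct layers $u_0,u_1$, and to which of $x,y$ are deep — shows that one of the four routes through the anchors and adjacent-layer vertices always has length at most $3$: whenever the two chosen vertices project into a common closed neighbourhood, their distance is at most $2$ by Lemma~\ref{l:dist}, while the contributions $1-t<1/2$ compensate for the larger $G_1$-separation (for instance, in the ``both deep in the same layer'' case the route through the adjacent layers has length $(1-t_x)+2+(1-t_y)=4-t_x-t_y<3$, and in the ``both deep in different layers'' case the route through the anchors has length $t_x+1+t_y<3$). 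This gives $d_{G_1\circ G_2}(x,y)\le 3$ in every case, the desired contradiction. The crux of the whole argument is precisely this deep-edge rerouting, since the naive estimate obtained by projecting the endpoints to their nearest anchors is off by exactly one.
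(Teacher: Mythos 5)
Your proof is correct. The paper's own argument has the same engine but a different logical shape: it fixes the closest vertices $V_1,V_2$ to $x,y$ on the geodesic $\g$, uses Lemma \ref{l:dist} to force $\pi(V_1)=\pi(V_2)$ and $d_{G_1\circ G_2}(V_1,V_2)=2$, notes that $L(\g)>3$ makes one end-segment longer than $1/2$, and reroutes through the opposite endpoint $W$ of that edge to build a path strictly shorter than $\g$, contradicting geodesicity; the ``furthermore'' clause is then dispatched in one line as ``straightforward.'' You instead prove the contrapositive of a single stronger statement --- if some path joining $x$ and $y$ projects onto at most two vertices of $G_1$, then $d_{G_1\circ G_2}(x,y)\le 3$ --- from which both assertions follow simultaneously. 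The key mechanism is identical (the deep-edge rerouting through the far endpoint, combined with the bound $d\le 2$ from Lemma \ref{l:dist} for two vertices whose projections lie in a common closed neighbourhood of $G_1$), but your packaging treats arbitrary paths directly, which turns the second assertion --- precisely the one the paper glosses over --- into an immediate corollary rather than an afterthought. Your case analysis is only written out for two configurations, but the remaining ones (one deep and one shallow endpoint, or a path contained in a single edge) succumb to the same two estimates, so nothing is genuinely missing.
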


\begin{proof}
Since $L(\g)>3$ then $\g$ contains at least three vertices in $G_1 \circ G_2$. Let $V_1$ and $V_2$ be the closest vertices to $x$ and $y$ in $\g$, respectively.
Seeking for a contradiction assume that $\pi(\g)$ contains either one or two vertices in $G_1$.
Since $G_1$ is a non-trivial graph and $\pi(\g)$ contains at most two vertices, Lemma \ref{l:dist} gives that $d_{G_1 \circ G_2}(V_1,V_2)=2$ and $\pi(V_1)= \pi(V_2)$.
Furthermore, since $L(\g)>3$ we have either $d_{G_1 \circ G_2}(x,V_1)>1/2$ or $d_{G_1 \circ G_2}(y,V_2)>1/2$. Without loss of generality we can assume that $d_{G_1 \circ G_2}(x,V_1)>1/2$.
Let $W$ be the vertex in $G_1\circ G_2$ with $x$ in the edge $[V_1,W]$.
Then $d_{G_1 \circ G_2}(x,W)< 1/2 < d_{G_1 \circ G_2}(x,V_1)$.
Consider now a path $\g_1:=[x W]\cup [W V_2] \cup [V_2 y]$ joining $x$ and $y$ in $G_1 \circ G_2$. Hence, $L(\g_1)<L(\g)$ since $d_{G_1 \circ G_2}(W,V_2) \leq 2$. This is the contradiction we were looking for, and then $\pi(\g)$ contains at least three vertices in $G_1$.
Finally, since $L(\s)\ge L(\g)$ and $\pi(\g)$ contains at least three vertices, the proof is straightforward.
\end{proof}

\begin{lemma}\label{l:proy}
Let  $G_1$ be a non-trivial graph and $G_2$ be any  graph. Consider a geodesic $\g$ in $G_1 \circ G_2$ joining $x$ and $y$. If $L(\g) > 3$, then $\pi(\g)$ is a geodesic in $G_1$ joining $\pi(x)$ and $\pi(y)$. Besides, if $L(\g) = 3$ then $\pi(\g)$ contains a geodesic in $G_1$ joining $\pi(x)$ and $\pi(y)$.
\end{lemma}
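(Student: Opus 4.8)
The plan is to exploit that the projection $\pi$ never increases distances. First I would record, exactly as in the proof of Lemma \ref{l:dist}, that if $\s$ is any path in $G_1\circ G_2$ then $\pi(\s)$ is a path in $G_1$ joining the projections of its endpoints with $L(\pi(\s))\le L(\s)$; in particular $\pi$ is $1$-Lipschitz, so $d_{G_1}(\pi(x),\pi(y))\le L(\pi(\g))\le L(\g)$. Since a path whose length equals the distance between its endpoints is automatically a geodesic, it suffices to prove the single equality $L(\pi(\g))=d_{G_1}(\pi(x),\pi(y))$. As the left-hand side is the total ``horizontal'' length of $\g$ (the edges moving in the $G_1$-coordinate, the ``vertical'' edges collapsing to points under $\pi$), this amounts to showing that the projection of $\g$ performs no detour in $G_1$.

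The core case is when $x$ and $y$ are vertices. Here $L(\g)>3>2$, so Lemma \ref{l:dist} forces $\pi(x)\ne\pi(y)$ and $L(\g)=d_{G_1\circ G_2}(x,y)=d_{G_1}(\pi(x),\pi(y))$. Combining this with the chain $d_{G_1}(\pi(x),\pi(y))\le L(\pi(\g))\le L(\g)$ squeezes everything to equality, whence $\pi(\g)$ is a geodesic (and, as a by-product, $\g$ contains no vertical edge at all). The case $L(\g)=3$ with $x,y$ vertices is identical.

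To reach the general statement I would reduce to the vertex case by passing to the two extreme vertices $V_1,V_2$ of $G_1\circ G_2$ on $\g$ (those closest to $x$ and to $y$, which exist and already force $\pi(\g)$ to meet at least three vertices by Lemma \ref{l:+3vert}), with $a=d(x,V_1)$ and $b=d(V_2,y)$ in $[0,1)$. The middle piece $\g|_{[V_1V_2]}$ is a vertex-to-vertex geodesic of length $L(\g)-a-b$; when this exceeds $3$ the vertex case makes it all-horizontal and shows $\pi(\g|_{[V_1V_2]})$ is a $G_1$-geodesic, after which one only has to verify that the two short end-pieces prolong it without backtracking — a backtrack of $\pi(\g)$ would, together with the freedom to choose the $G_2$-coordinate along any horizontal edge, produce a path from $x$ to $y$ strictly shorter than $\g$. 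The main obstacle is precisely this no-detour claim in the remaining regime $3<L(\g)<5$, where $L(\g|_{[V_1V_2]})\le 3$ and the vertex case is unavailable. Here I would argue directly from Lemma \ref{l:dist} applied to $V_1,V_2$ (which still gives $d_{G_1}(\pi(V_1),\pi(V_2))=d(V_1,V_2)$ as soon as $d(V_1,V_2)>2$) and from the fact that two vertices of a common fibre $\{u\}\circ G_2$ are at distance at most $2$ (Lemma \ref{l:dist}, reinforced by Lemmas \ref{l:vecindad} and \ref{l:geodesicas} and Corollary \ref{c:geod3}): if $\pi(\g)$ retraced an edge of $G_1$, the corresponding excursion of $\g$ would remain within distance $2$ of a single fibre and could be replaced by a cheaper crossing through an adjacent fibre, contradicting that $\g$ is geodesic — this is exactly what excludes the length-$3.2$ configuration with two vertical end-pieces.

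Finally, for $L(\g)=3$ the conclusion must be weakened to ``contains a geodesic'', because genuine vertical geodesics occur: by Remark \ref{r:midpoints} a fibre geodesic of length $3$ projects to a single vertex, so $\pi(\g)$ may degenerate. I would treat this case by the same reduction to $V_1,V_2$; now $L(\pi(\g))\le 3$, and since $\pi(\g)$ is a connected walk joining $\pi(x)$ and $\pi(y)$ whose inefficiency is controlled by the bounded vertical slack, its image contains a shortest $G_1$-path between $\pi(x)$ and $\pi(y)$, that is, a geodesic, which is exactly the required statement.
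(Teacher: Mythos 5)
Your opening reduction is sound and your vertex--endpoint case is actually cleaner than the paper's: since $\pi$ is $1$-Lipschitz on paths, and Lemma \ref{l:dist} gives $d_{G_1\circ G_2}(x,y)=d_{G_1}(\pi(x),\pi(y))$ once $L(\g)>2$ forces $\pi(x)\neq\pi(y)$, the chain $d_{G_1}(\pi(x),\pi(y))\le L(\pi(\g))\le L(\g)=d_{G_1}(\pi(x),\pi(y))$ does close that sub-case. The paper instead runs a lifting-by-contradiction argument uniformly, and the reason is precisely the part you leave open: the lemma is stated for arbitrary points $x,y$, and the heart of the matter is the two end-pieces when $x$ or $y$ lies in the interior of an edge $[V_1',V_1]$ or $[V_r',V_r]$.

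There the proposal has a genuine gap. Knowing that the middle piece $\pi(\g|_{[V_1V_r]})$ is a $G_1$-geodesic does not imply that $\pi(\g)$ is one: a competing shorter path $\G$ in $G_1$ from $\pi(x)$ to $\pi(y)$ need not ``backtrack'' along $\pi(\g)$ at all --- it may simply leave the edge containing $\pi(x)$ through the \emph{other} endpoint $\pi(V_1')$. Ruling this out requires lifting $\G$ back to $G_1\circ G_2$ starting from $x$ through whichever of $V_1,V_1'$ projects onto $\G$ (the paper's $V_i^*$), and checking via Lemma \ref{l:dist} that the lift has length $L(\G)$ plus the end corrections, hence is shorter than $\g$; this needs $d_{G_1}(v_1^*,v_r^*)\ge 2$ so that the lifted middle segment does not collapse, which is where $L(\g)\ge 3$ enters. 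Your phrases ``prolong it without backtracking'', ``could be replaced by a cheaper crossing through an adjacent fibre'', and the final ``its inefficiency is controlled by the bounded vertical slack, so its image contains a shortest path'' name the right obstructions but assert rather than prove the conclusion --- note in particular that a walk joining two points need \emph{not} contain a geodesic between them (e.g.\ the walk $v_1v_2v_3v_4$ in $C_5$), so the $L(\g)=3$ case really does require the paper's explicit split according to whether $\pi(\g)$ meets one, two, three or four vertices, the two-vertex case being the degenerate one with $\pi(x)=\pi(y)$. To complete your argument you would have to carry out the $V_i^*$-lifting construction for the end-pieces; once you do, you are essentially rewriting the paper's proof.
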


\begin{proof}
Assume first that $L(\g)>3$. By Lemma \ref{l:+3vert}, $\pi(\g)$ contains at least three vertices in $G_1$. Denote by $V_{1},\ldots,V_{r}$ the vertices of $G_1\circ G_2$ in $\g$ with $r \geq 3$, and $v_{1},\ldots,v_{r}$ their projections  in $G_1$ (there are at least three different vertices). Without loss of generality we can assume that $\g$ meet $V_{1},\ldots,V_{r}$ in this order. Let $V_{1}',V_{r}'$ be two vertices in $G_1\circ G_2$ such that $x\in [V_{1}',V_1]$ and $y\in [V_{r}',V_r]$, and denote by $v_{1}',v_{r}'$ their projections in $G_1$, respectively.
Since $d_{G_1\circ G_2}( V_{1} , V_{r} )\geq 2$ and $d_{G_1\circ G_2}( x , y )\geq 3$, Lemma \ref{l:dist} gives $d_{G_1}\big( \{v_{1},v_1'\} , \{v_{r},v_r'\} \big)\geq 2$.

Seeking for a contradiction assume that there is a geodesic $\G$ in $G_1$ joining $\pi(x)$ and $\pi(y)$ with length less than $L(\pi(\g))$.
Let us consider $v_i^*:=\{v_{i},v_i'\}\cap \G$ and $V_i^*\in\{V_i,V_i'\}$ with $\pi(V_i^*)=v_i^*$ for $i\in\{1,r\}$.
Now, we have three cases.
\begin{enumerate}
\item $\pi(x)\neq v_1$ and $\pi(y)\neq v_r$. Then $\pi(x)\in [v_{1}',v_1]$ and $\pi(y)\in [v_{r}',v_r]$. Let $\g_1:=[x V_{1}^{*}]\cup [V_{1}^{*}V_{r}^{*}]\cup [V_{r}^{*}y]\subset G_1\circ G_2$.
Since $d_{G_1}( v_1^* , v_{r}^* )\geq 2$, Lemma \ref{l:dist} gives $d_{G_1\circ G_2}(V_{1}^{*},V_{r}^{*})=d_{G_1}(v_{1}^{*},v_{r}^{*})$, and so $L(\g_1)= L(\G)<L(\pi(\g))\le L(\g)$.
This is the contradiction we were looking for, and so, $\pi(\g)$ is a geodesic in $G_1$ joining $\pi(x)$ and $\pi(y)$.

\item $\pi(x)= v_1$ and $\pi(y)\neq v_r$ or $\pi(x)\neq v_1$ and $\pi(y)= v_r$. By symmetry, we can assume $\pi(x)=v_1$ and $\pi(y)\neq v_r$. Then $\pi(y)\in [v_{r}',v_r]$ and $d_{G_1\circ G_2}(x,V_1)\leq 1/2$. Let $\g_1:=[x V_{1}]\cup [V_{1}V_{r}^{*}]\cup [V_{r}^{*}y]\subset G_1\circ G_2$. Since $d_{G_1}( v_{1} , v_{r}^* )\geq 2$, Lemma \ref{l:dist} gives $d_{G_1\circ G_2}(V_{1},V_{r}^{*})=d_{G_1}(v_{1},v_{r}^{*})$, and so $L(\g_1)=L(\G)+L([xV_1])<L(\pi(\g))+L([xV_1])\le L(\g)$. This is the contradiction we were looking for, and so, $\pi(\g)$ is a geodesic in $G_1$ joining $\pi(x)$ and $\pi(y)$.

\item $\pi(x)= v_1$ and $\pi(y)= v_r$.
Then $\pi(\g)=\pi([V_1V_r])$. Since $d_{G_1}( v_{1} , v_{r} )\geq 2$, Lemma \ref{l:dist} gives $d_{G_1\circ G_2}(V_{1},V_{r})=d_{G_1}(v_{1},v_{r})$. Then $L\big(\pi(\g)\big)=d_{G_1}(v_1,v_r)$, and $\pi(\g)$ is a geodesic in $G_1$ joining $\pi(x)$ and $\pi(y)$.
\end{enumerate}

Assume now that $L(\g)=3$. Then $\pi(\g)$ contains either one, two, three or four vertices in $G_1$.

If $\pi(\g)$ contains a single vertex in $G_1$, then $\g$ is contained in $\{v\}\circ G_2$ for some $v\in V(G_1)$. Thus, $\pi(\g)=v$ is a geodesic in $G_1$ joining $\pi(x)$ with $\pi(y)$.

If $\pi(\g)$ contains exactly two vertices in $G_1$, then $x,y$ are midpoints  of edges and $\pi(x)=\pi(y)$.

If $\pi(\g)$ contains three or four vertices in $G_1$, then $\pi(\g)$ contains a geodesic in $G_1$ joining $\pi(x)$ and $\pi(y)$, and the argument used in the proof of the case $L(\g)>3$ gives that $\pi(\g)$ is a geodesic.
\end{proof}

\begin{remark}\label{r:proy3}
Let $\g$ be a geodesic in $G_1 \circ G_2$ joining $x$ and $y$. If $L(\g) = 3$ and $\pi(\g)$ is not a geodesic in $G_1$ joining $\pi(x)$ and $\pi(y)$, then $x,y$ are midpoints of edges, $\pi(x)=\pi(y)\in V(G_1)$ and $\diam (\pi(\g))=1$.
\end{remark}

\begin{definition}\label{def:diam}
 The diameter of the vertices of the graph $G$, denoted by $\diam V(G)$, is defined as,
    $$\diam V(G):= \sup \{d_{G}(u,v): u,v\in V(G)\},$$
 and the diameter of the graph $G$, denoted by $\diam G$, is defined as,
$$\diam G:= \sup \{d_{G}(x,y): x,y\in G\}.$$
\end{definition}

\begin{corollary}\label{c:proy3}
Let $\g$ be a geodesic in $G_1 \circ G_2$ joining $x$ and $y$. If $\pi(\g)$ is not a geodesic in $G_1$ joining $\pi(x)$ and $\pi(y)$, then $\diam \big(\pi(\g)\big)< 3$.
\end{corollary}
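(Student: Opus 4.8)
The plan is to deduce Corollary \ref{c:proy3} directly from Lemma \ref{l:proy} and Remark \ref{r:proy3}, which together already handle almost all cases. First I would argue that if $L(\g) > 3$, then by Lemma \ref{l:proy} the projection $\pi(\g)$ \emph{is} a geodesic in $G_1$ joining $\pi(x)$ and $\pi(y)$; so under the hypothesis that $\pi(\g)$ is \emph{not} such a geodesic, we are forced into the regime $L(\g) \le 3$. Next I would dispose of the subcase $L(\g) < 3$ by a short direct estimate, and then invoke Remark \ref{r:proy3} for the remaining boundary case $L(\g) = 3$.

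For the case $L(\g) = 3$: by hypothesis $\pi(\g)$ is not a geodesic in $G_1$ joining $\pi(x)$ and $\pi(y)$, so Remark \ref{r:proy3} applies and yields immediately that $\diam(\pi(\g)) = 1 < 3$, finishing this case. For the case $L(\g) < 3$, the key observation is that the projection map $\pi$ does not increase distances in the relevant sense: since $\pi$ sends each edge of $G_1 \circ G_2$ either to a single vertex of $G_1$ or to an edge of $G_1$ (by Definition \ref{d:lexprod}), the image $\pi(\g)$ is a path in $G_1$ whose length is at most $L(\g)$. Hence any two points $a, b \in \pi(\g)$ satisfy $d_{G_1}(a,b) \le \diam(\pi(\g)) \le L(\pi(\g)) \le L(\g) < 3$. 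This gives the bound $\diam(\pi(\g)) < 3$ outright in this subcase.

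Assembling these pieces: whenever $\pi(\g)$ fails to be a geodesic joining $\pi(x)$ and $\pi(y)$, Lemma \ref{l:proy} rules out $L(\g) > 3$, so either $L(\g) < 3$ (handled by the length estimate, giving $\diam(\pi(\g)) \le L(\g) < 3$) or $L(\g) = 3$ (handled by Remark \ref{r:proy3}, giving $\diam(\pi(\g)) = 1 < 3$). In both surviving cases $\diam(\pi(\g)) < 3$, which is exactly the claim.

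I expect the only point requiring care to be the length bound $\diam(\pi(\g)) \le L(\pi(\g)) \le L(\g)$ in the subcase $L(\g) < 3$: one must check that $\pi$ maps the geodesic $\g$ to a genuine path (possibly with repeated or collapsed vertices) of no greater length, which follows because adjacent points of $G_1 \circ G_2$ project to points of $G_1$ at distance at most $1$, as recorded in Lemma \ref{l:dist}. This is a routine monotonicity argument rather than a genuine obstacle, so the corollary follows almost entirely from the already-established Lemma \ref{l:proy} and Remark \ref{r:proy3}.
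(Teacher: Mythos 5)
Your proposal is correct and follows exactly the route the paper intends for this corollary (which it states without an explicit proof): Lemma \ref{l:proy} excludes $L(\g)>3$, Remark \ref{r:proy3} gives $\diam(\pi(\g))=1$ when $L(\g)=3$, and the $1$-Lipschitz behaviour of $\pi$ recorded in Lemma \ref{l:dist} handles $L(\g)<3$ via $\diam(\pi(\g))\le L(\g)<3$. No gaps; this is the same argument, just written out.
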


Notice that, if $\g$ is a geodesic  in $G_1\circ G_2$ joining the points $x$ and $y$, then it is possible that $\pi(\g)$ does not contain a geodesic  in $G_1$ joining the points $\pi(x)$ and $\pi(y)$, as the following example shows.

\begin{example}
Consider $G_1$ as the \emph{cycle graph} $C_3$ with vertices $\{v_1,v_2,v_3\}$ and $G_2$ as the \emph{path graph} $P_3$ with vertices $\{w_1,w_2,w_3\}$ and $E(G_2)=\{[w_1,w_2],[w_2,w_3]\}$.
Let $x$ and $y$ be the midpoints of edges $[(v_1,w_1),(v_2,w_1)]$ and $[(v_1,w_3),(v_3,w_3)]$, respectively.
We have that $\g:= [x (v_2,w_1)]\cup [(v_2,w_1),(v_3,w_3)] \cup [(v_3,w_3) y]$ is a geodesic in $G_1\circ G_2$ joining $x$ and $y$, but $\pi(\g)=[\pi(x) v_2] \cup [v_2, v_3] \cup [v_3 \pi(y)]$ does not contain the geodesic in $G_1$ joining $\pi(x)$ and $\pi(y)$ (note that this geodesic is $[\pi(x) v_1] \cup [v_1 \pi(y)]$).
\end{example}


\
\section{Hiperbolicity in lexicographic products}

Some bounds for the hyperbolicity constant of the lexicographic product of two graphs are studied in this section.
These bounds allow to prove Theorem \ref{t:Caracterizacion 1}, which characterizes the hyperbolic lexicographic products of two graphs.

We say that a subgraph  $\G$ of $G$ is \emph{isometric} if $d_{\G}(x,y)=d_{G}(x,y)$ for every $x,y\in \G$.
The following result which appears in \cite[Lemma 5]{RSVV} will be useful.

\begin{lemma}\label{l:subgraph}
 If $\G$ is an isometric subgraph of $G$, then $\d(\G) \le \d(G)$.
\end{lemma}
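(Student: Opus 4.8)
The plan is to show that every geodesic triangle of $\Gamma$ is, verbatim, a geodesic triangle of $G$, and that the thin constant of such a triangle takes the same value whether it is measured in $\Gamma$ or in $G$; the conclusion $\delta(\Gamma)\le\delta(G)$ then follows simply by taking suprema. If $\delta(G)=\infty$ there is nothing to prove, so I would first reduce to the case in which $G$ is hyperbolic.

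The first step is to check that a geodesic of $\Gamma$ is also a geodesic of $G$. If $\gamma:[a,b]\to\Gamma$ is a geodesic in $\Gamma$, then $L(\gamma|_{[t,s]})=d_\Gamma(\gamma(t),\gamma(s))=|t-s|$ for all $s,t$. Since $\Gamma\subseteq G$ as metric spaces and all points of $\gamma$ lie in $\Gamma$, the length $L(\gamma|_{[t,s]})$ is computed identically in both spaces (the two metrics agree on $\Gamma$), and the isometry hypothesis gives $L(\gamma|_{[t,s]})=d_G(\gamma(t),\gamma(s))=|t-s|$; hence $\gamma$ is a geodesic of $G$ as well. Consequently, any geodesic triangle $T=\{x_1,x_2,x_3\}$ of $\Gamma$, being the union of three geodesics of $\Gamma$, is simultaneously a geodesic triangle of $G$.

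The second step is to transfer thinness. I would fix such a $T$ with sides $J_1,J_2,J_3$ and a point $x\in J_i$. Both $x$ and every point of $\bigcup_{j\neq i}J_j$ lie in $\Gamma$, so the isometry hypothesis applies to each such pair, giving $d_\Gamma(x,y)=d_G(x,y)$ for every $y\in\bigcup_{j\neq i}J_j$; taking the infimum over these $y$ yields $d_\Gamma\big(x,\bigcup_{j\neq i}J_j\big)=d_G\big(x,\bigcup_{j\neq i}J_j\big)$. Therefore the thin constant $\delta(T)$ computed in $\Gamma$ equals the one computed in $G$. As $T$ is a geodesic triangle of the $\delta(G)$-hyperbolic space $G$, this common value is at most $\delta(G)$, and taking the supremum over all geodesic triangles $T$ of $\Gamma$ gives $\delta(\Gamma)\le\delta(G)$.

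The only delicate point, which I regard as the crux of the argument, is the bookkeeping of which metric enters the definition of thinness: a priori $\delta(T)$ could differ between $\Gamma$ and $G$ because the defining distance refers to the ambient space. What resolves this is that the notion of \emph{isometric subgraph} preserves distances between \emph{all} points of $\Gamma$, not merely between vertices; since vertices of geodesic triangles are frequently midpoints or other interior points of edges, it is precisely this full-strength isometry that forces the two thin constants to coincide and makes the transfer in the second step valid.
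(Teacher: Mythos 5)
Your proof is correct, and it matches the intended argument: the paper does not prove this lemma itself but quotes it from \cite[Lemma 5]{RSVV}, and the standard proof of that cited result is exactly what you give. The two key observations you make --- that a geodesic of $\G$ is a geodesic of $G$ because arc length is intrinsic and the two distance functions agree on $\G$, and that the thin constant of a triangle of $\G$ is the same whether measured in $\G$ or in $G$ because the isometry hypothesis applies to \emph{all} points of $\G$ (midpoints of edges included), not just vertices --- are precisely the points that make the supremum comparison $\d(\G)\le\d(G)$ go through.
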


The next theorem shows an important qualitative result: if $G_1$ is not hyperbolic then $G_1 \circ G_2$ is not hyperbolic.

\begin{theorem}\label{t:sublex}
 Let $G_1$ and $G_2$ two graphs, then $\d(G_1) \leq \d(G_1 \circ G_2).$
\end{theorem}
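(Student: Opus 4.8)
The plan is to realize $G_1$ as an isometric subgraph of $G_1\circ G_2$ and then apply Lemma \ref{l:subgraph}. Before doing so I would dispose of the degenerate case: if $G_1$ is the trivial graph $E_1$, then $G_1$ is a single point, so $\delta(G_1)=0\le \delta(G_1\circ G_2)$ and there is nothing to prove. Hence I may assume from now on that $G_1$ is non-trivial, which is precisely the hypothesis under which Lemma \ref{l:dist} is available.

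Fix an arbitrary vertex $v\in V(G_2)$ and consider the subgraph $\Gamma:=G_1\circ\{v\}$ of $G_1\circ G_2$, whose vertex set is $\{(u,v):u\in V(G_1)\}$. By Definition \ref{d:lexprod}, two distinct such vertices $(u_1,v)$ and $(u_2,v)$ are adjacent in $G_1\circ G_2$ if and only if $[u_1,u_2]\in E(G_1)$, since the alternative condition $u_1=u_2$ cannot occur for distinct vertices of $\Gamma$. Thus the map $u\mapsto (u,v)$ is a graph isomorphism between $G_1$ and $\Gamma$, and in particular $\delta(\Gamma)=\delta(G_1)$.

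The heart of the argument is to check that $\Gamma$ is an \emph{isometric} subgraph of $G_1\circ G_2$. For any two vertices $(u_1,v),(u_2,v)\in V(\Gamma)$, Lemma \ref{l:dist} gives $d_{G_1\circ G_2}((u_1,v),(u_2,v))=d_{G_1}(u_1,u_2)=d_{\Gamma}((u_1,v),(u_2,v))$, so the two metrics already coincide on the vertices of $\Gamma$. To upgrade this to all points of $\Gamma$ (midpoints and other interior points of edges), I would use that any path leaving an edge must exit through one of its two endpoints: a geodesic of $G_1\circ G_2$ joining two points $x,y\in\Gamma$ reaches the interiors of the edges of $\Gamma$ containing $x$ and $y$ only through their endpoints, so its length equals the sum of the relevant edge-fragments and a vertex-to-vertex distance, which by the previous identity matches the corresponding shortest-path computation carried out inside $\Gamma$. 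Consequently $d_{G_1\circ G_2}(x,y)=d_{\Gamma}(x,y)$ for all $x,y\in\Gamma$.

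Finally, since $\Gamma$ is an isometric subgraph of $G_1\circ G_2$, Lemma \ref{l:subgraph} yields $\delta(G_1)=\delta(\Gamma)\le \delta(G_1\circ G_2)$, which is the claimed inequality. I expect the only genuinely delicate point to be the reduction from vertex distances to distances between arbitrary points of $\Gamma$; everything else is a direct application of the distance formula of Lemma \ref{l:dist} together with the isometric-subgraph inequality of Lemma \ref{l:subgraph}.
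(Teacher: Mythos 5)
Your proposal is correct and follows exactly the paper's route: the paper proves this theorem in one line by asserting that $G_1\circ\{y\}$ is an isometric subgraph of $G_1\circ G_2$ and invoking Lemma \ref{l:subgraph}. You simply supply the details of the isometry claim (via Lemma \ref{l:dist} and the reduction to vertex distances) that the paper leaves implicit.
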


\begin{proof}
Since $G_1 \circ \{y\}$ is an isometric subgraph of $G_1 \circ G_2$ for every $y \in V(G_2)$, Lemma \ref{l:subgraph} gives the result.
\end{proof}

Example \ref{ex:Cn_P2} shows that the equality in Theorem \ref{t:sublex} is attained: $\d(C_n)=\d(C_n\circ P_2)$ for $n\geq 5$.

Note that the strong product graph $G\boxtimes P_2$ is isomorphic to $G\circ P_2$ for any graph $G$.
We recall that $\d(P_n)=0$ since the path graph $P_n$ is a tree; besides, it is well known that the hyperbolicity constant of the cycle graph $C_n$ is $n/4$, see \cite[Theorem 11]{RSVV}.
The following results which appear in \cite{CCCR} give the hyperbolicity constant of some lexicographic product graphs.

\begin{example}\label{ex:Pn_P2}
Let $P_n$ be the path graph with $n\geq 2$. Then
  \[\d(P_n\circ P_2)=\left\{
\begin{array}{ll}
1,\quad &\mbox{if }\ n=2,\\

5/4,\quad &\mbox{if }\ n=3,\\

3/2,\quad &\mbox{if }\ n\ge 4.
\end{array}
\right.
\]
\end{example}

\begin{example}\label{ex:Cn_P2}
Let $C_n$ be the cycle graph with $n\geq 3$. Then

  \[\d(C_n\circ P_2)=\left\{
\begin{array}{ll}
1, \quad &\mbox{if }\ n=3,\\
5/4,\quad &\mbox{if }\ n=4,\\
n/4,\quad &\mbox{if }\ n\ge5.
\end{array}
\right.
\]
\end{example}

\begin{example}\label{ex:complete}
Let $K_m, K_n$ be the complete graphs with $m,n$ vertices, respectively, and $m,n\ge 2$. Then $K_m\circ K_n$ is isomorphic to $K_{mn}$ and $\d(K_m\circ K_n)=1$.
\end{example}

\begin{proposition}\label{p:IsomLex}
  Let  $G_1$ be a non-trivial graph and $G_2$ any  graph. Consider isometric subgraphs $\G_1,\G_2$ of $G_1,G_2$, respectively, with $\G_1$ non-trivial. Then $\G_1\circ \G_2$ is an isometric subgraph to $G_1\circ G_2$.
\end{proposition}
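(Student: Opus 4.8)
The plan is to show two things in turn: that $\G_1\circ\G_2$ sits inside $G_1\circ G_2$ as a subgraph, and that the two graph metrics coincide on $\G_1\circ\G_2$. First I would check the inclusion. On vertices it is immediate, since $V(\G_1)\times V(\G_2)\subseteq V(G_1)\times V(G_2)$. For the edges I would take any edge $[(u_1,v_1),(u_2,v_2)]$ of $\G_1\circ\G_2$ and apply Definition \ref{d:lexprod}: either $[u_1,u_2]\in E(\G_1)\subseteq E(G_1)$, or $u_1=u_2$ and $[v_1,v_2]\in E(\G_2)\subseteq E(G_2)$; in both cases the same pair is an edge of $G_1\circ G_2$. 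Hence $\G_1\circ\G_2\subseteq G_1\circ G_2$, which already gives $d_{\G_1\circ\G_2}(x,y)\ge d_{G_1\circ G_2}(x,y)$ for all $x,y\in\G_1\circ\G_2$, so only the reverse inequality remains.

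Next I would prove that the metrics agree on vertices, which is the heart of the argument. Since both $G_1$ and $\G_1$ are non-trivial, Lemma \ref{l:dist} applies to both products. For vertices $(u,v),(u',v')\in V(\G_1\circ\G_2)$, if $u\neq u'$ then Lemma \ref{l:dist} gives $d_{\G_1\circ\G_2}\big((u,v),(u',v')\big)=d_{\G_1}(u,u')$ and $d_{G_1\circ G_2}\big((u,v),(u',v')\big)=d_{G_1}(u,u')$, and these coincide because $\G_1$ is isometric in $G_1$. If $u=u'$ then the lemma gives $\min\{2,d_{\G_2}(v,v')\}$ and $\min\{2,d_{G_2}(v,v')\}$ respectively, and these coincide because $\G_2$ is isometric in $G_2$. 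Thus the two metrics agree on every pair of vertices of $\G_1\circ\G_2$.

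Finally I would promote this to arbitrary points. Both $\G_1\circ\G_2$ and $G_1\circ G_2$ are graphs with edges of length $1$, so for two points $x,y$ interior to edges $e,e'$ the distance equals $\min_{p,q}\big(\ell_e(x,p)+d(p,q)+\ell_{e'}(q,y)\big)$, where $p,q$ run over the endpoints of $e,e'$ and $\ell$ is the length measured along the edge (the cases where $x$ or $y$ is a vertex, or where $e=e'$, being limiting or elementary). Because $x,y\in\G_1\circ\G_2$, the edges $e,e'$ and hence their endpoints $p,q$ belong to $\G_1\circ\G_2$; the within-edge lengths are intrinsic and identical in both graphs, and the vertex distances $d(p,q)$ agree by the previous step. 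Therefore the two distances coincide, which yields the missing inequality and the isometry. I expect the last step to be the main obstacle, since one must rule out that a geodesic of $G_1\circ G_2$ shortcuts through edges or vertices outside $\G_1\circ\G_2$; the reduction to edge endpoints is exactly what confines this extra freedom to the vertex-to-vertex portion, where equality has already been established.
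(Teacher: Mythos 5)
Your proposal is correct and follows essentially the same route as the paper: the trivial subgraph inequality, the vertex case via Lemma \ref{l:dist} combined with the isometry of $\G_1$ in $G_1$ and $\G_2$ in $G_2$, and the reduction of arbitrary points to the endpoints of the edges containing them. The paper phrases the last step by decomposing a geodesic of $G_1\circ G_2$ as $[xA_i]\cup[A_iB_j]\cup[B_jy]$ rather than via your min-over-endpoints formula, but this is the same argument.
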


Note that taking $\G_1$ as a trivial graph, $\G_1\circ \G_2$ is not an isometric subgraph to $G_1\circ G_2$ if $\diam V(\G_2) \ge3$.

\begin{proof}
Since $\G_1\circ \G_2$ is a subgraph of $G_1\circ G_2$, we have $d_{\G_1\circ \G_2}(x,y)\geq d_{G_1\circ G_2}(x,y)$ for every $x,y\in \G_1\circ \G_2$.
Let $x,y$ be any points of $\G_1\circ \G_2$. If $x,y\in V(\G_1\circ \G_2)$ then by Lemma \ref{l:dist} we have $d_{G_1\circ G_2}(x,y)=d_{\G_1\circ \G_2}(x,y)$ and we obtain the result.
Without loss of generality we can assume that $x,y\notin V(\G_1\circ \G_2)$.
Let $A_1,A_2,B_1,B_2\in V(\G_1 \circ \G_2)$ with $x\in [A_1,A_2]$, $y\in [B_1,B_2]$.
Consider a geodesic $\g$ in $G_1\circ G_2$ joining $x$ and $y$ with $\g:=[xA_i]\cup [A_iB_j]\cup [B_jy]$ for some $i,j\in \{1,2\}$.
Then
$$d_{\G_1\circ \G_2}(x,y)\leq d_{\G_1\circ \G_2}(x,A_i)+d_{\G_1\circ \G_2}(A_i,B_j)+d_{\G_1\circ \G_2}(B_j,y)=d_{G_1\circ G_2}(x,y).$$
Thus, $d_{G_1\circ G_2}(x,y)=d_{\G_1\circ \G_2}(x,y)$.
\end{proof}

\begin{theorem}\label{t:HypIsomLex}
Let  $G_1$ be a non-trivial graph and $G_2$ any graph. Then
$$\d(G_1\circ G_2)=\max\{ \d(\G_1\circ \G_2) : \G_i \text{ is isometric to } G_i \text{ for } i=1,2 \text{ and } \G_1 \text{ non-trivial}\}.$$
\end{theorem}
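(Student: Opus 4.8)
The plan is to prove the two inequalities $\delta(G_1\circ G_2)\ge \max\{\dots\}$ and $\delta(G_1\circ G_2)\le \max\{\dots\}$ separately. The first inequality is the easy direction and follows immediately from Proposition \ref{p:IsomLex} together with Lemma \ref{l:subgraph}: if $\G_1,\G_2$ are isometric subgraphs of $G_1,G_2$ with $\G_1$ non-trivial, then $\G_1\circ\G_2$ is an isometric subgraph of $G_1\circ G_2$, so $\delta(\G_1\circ\G_2)\le\delta(G_1\circ G_2)$; taking the supremum over all such pairs gives one half of the identity. (The choice $\G_1=G_1$, $\G_2=G_2$ shows the maximum is at least $\delta(G_1\circ G_2)$, so the maximum is actually attained and the $\max$ is well-defined once the reverse inequality is established.)

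For the reverse inequality $\delta(G_1\circ G_2)\le\max\{\delta(\G_1\circ\G_2)\}$, I would argue that every geodesic triangle $T=\{x_1,x_2,x_3\}$ in $G_1\circ G_2$ is already contained in a sub-lexicographic-product $\G_1\circ\G_2$ with $\G_1,\G_2$ isometric subgraphs and $\G_1$ non-trivial, whose thinness constant $\delta(\G_1\circ\G_2)$ therefore bounds $\delta(T)$. Concretely, let $\g_1,\g_2,\g_3$ be the three geodesic sides of $T$. By Lemma \ref{l:proy}, applied to each side, the projection $\pi(\g_k)$ either is or contains a geodesic in $G_1$ joining the projected endpoints; collect the vertices of $G_1$ appearing in $\pi(\g_1)\cup\pi(\g_2)\cup\pi(\g_3)$ and let $\G_1$ be the subgraph of $G_1$ induced by these vertices. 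The crux is to verify that $\G_1$ can be taken \emph{isometric} in $G_1$: since the relevant portions of the $\pi(\g_k)$ are geodesics (or short, by Corollary \ref{c:proy3}, with diameter $<3$), the union of three geodesics between three points of $G_1$ spans an isometric subgraph, and it is non-trivial because $T$ has positive size. Similarly, the second coordinates appearing in $T$ lie in a finite vertex set of $G_2$; I would take $\G_2$ to be an isometric subgraph of $G_2$ containing those vertices, invoking the local structure of distances in $G_2$ (distances at most $2$ between relevant second coordinates, by Lemma \ref{l:dist}). Then $T\subset\G_1\circ\G_2$ and, by Proposition \ref{p:IsomLex}, this inclusion is isometric, so the three sides of $T$ remain geodesics in $\G_1\circ\G_2$ and $\delta(T)\le\delta(\G_1\circ\G_2)\le\max\{\dots\}$. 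Taking the supremum over all triangles $T$ yields $\delta(G_1\circ G_2)\le\max\{\dots\}$.

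The main obstacle I anticipate is the careful selection of $\G_1$ and $\G_2$ so that (i) they are genuinely isometric in $G_1$ and $G_2$ (not merely subgraphs), (ii) $\G_1$ is non-trivial, and (iii) the \emph{same} three geodesics forming $T$ are still geodesics in the smaller product $\G_1\circ\G_2$. Point (iii) is delivered automatically by Proposition \ref{p:IsomLex} once (i) is secured, so the real work is establishing that a finite union of geodesic segments among finitely many points of a graph sits inside an isometric subgraph. This requires a short argument that the convex-type hull of finitely many geodesics can be enlarged to an isometric subgraph without altering the relevant distances, and handling the degenerate cases (sides of length $\le 3$, or where $\pi(\g_k)$ fails to be a geodesic) via Corollary \ref{c:proy3} and Remark \ref{r:proy3}. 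Once these technicalities are discharged, the two inequalities combine to give the stated equality.
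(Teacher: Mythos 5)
Your first paragraph already contains the entire proof, and it coincides with the paper's. The parenthetical remark you make there --- that the choice $\G_1=G_1$, $\G_2=G_2$ is admissible (a graph is an isometric subgraph of itself, and $G_1$ is non-trivial by hypothesis) and shows the maximum is at least $\d(G_1\circ G_2)$ --- \emph{is} the reverse inequality $\d(G_1\circ G_2)\le\max\{\dots\}$. Combined with the easy direction from Lemma \ref{l:subgraph} and Proposition \ref{p:IsomLex}, which gives $\d(\G_1\circ\G_2)\le\d(G_1\circ G_2)$ for every admissible pair, the equality follows and the maximum is attained at $(G_1,G_2)$. That two-line argument is exactly what the paper does.

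The entire second paragraph is therefore unnecessary, and you should be aware that as written it would not survive scrutiny: the claim that ``the union of three geodesics between three points of $G_1$ spans an isometric subgraph'' is false in general. For instance, in a graph consisting of a long cycle with one chord, one can choose three points whose pairwise geodesics all avoid the chord; the subgraph induced by the vertices they visit contains a pair of vertices whose distance in the subgraph exceeds their distance in the ambient graph (which uses the chord). Repairing this would force you to enlarge $\G_1$, potentially up to all of $G_1$, at which point the argument collapses back to the trivial one. Fortunately none of this is needed: delete the second paragraph and your proof is complete and identical in substance to the paper's.
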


\begin{proof}
By Lemma \ref{l:subgraph} and Proposition \ref{p:IsomLex} we have $\d(G_1\circ G_2)\ge \d(\G_1\circ \G_2)$ for any $\G_1,\G_2$. Besides, since any graph is an isometric subgraph of itself we obtain the equality by taking $\G_1=G_1$ and $\G_2=G_2$.
\end{proof}

The next results will be useful.

\begin{theorem}[Theorem 8 in \cite{RSVV}]
\label{t:diameter1} In any graph $G$ the inequality $\d(G)\le\frac12\,\diam G$ holds and
it is sharp.
\end{theorem}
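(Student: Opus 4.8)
The plan is to bound the thin constant $\d(T)$ uniformly over all geodesic triangles $T$ in $G$ by $\frac12\,\diam G$, and then to exhibit a family of graphs attaining equality. The whole argument rests on a single elementary observation about the endpoints of a side of a triangle.

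First I would fix an arbitrary geodesic triangle $T=\{x_1,x_2,x_3\}$ with sides $[x_1x_2]$, $[x_2x_3]$ and $[x_3x_1]$, and take an arbitrary point $p\in[x_1x_2]$. The decisive point is that the endpoints of this side are shared with the remaining two sides: $x_1\in[x_3x_1]$ and $x_2\in[x_2x_3]$, so that
$$d\big(p,[x_2x_3]\cup[x_3x_1]\big)\le\min\{d(p,x_1),\,d(p,x_2)\}.$$
Since $p$ lies on the geodesic $[x_1x_2]$, we have $d(p,x_1)+d(p,x_2)=d(x_1,x_2)$, and the minimum of two nonnegative numbers never exceeds their average. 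Therefore
$$d\big(p,[x_2x_3]\cup[x_3x_1]\big)\le\tfrac12\big(d(p,x_1)+d(p,x_2)\big)=\tfrac12\,d(x_1,x_2)\le\tfrac12\,\diam G.$$
By symmetry the same estimate holds for points on the other two sides, so $\d(T)\le\frac12\,\diam G$; taking the supremum over all geodesic triangles gives $\d(G)\le\frac12\,\diam G$.

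For sharpness I would use the cycle graphs $C_n$. As recalled before the statement, $\d(C_n)=n/4$, while $\diam C_n=n/2$, since the two farthest points of $C_n$ are antipodal and lie at distance equal to half of the total length $n$. Hence $\d(C_n)=\frac12\,\diam C_n$ for every $n$, which shows that the inequality cannot be improved.

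I do not expect a real obstacle here: once one notices that the two endpoints of a side automatically belong to the other two sides, the entire content reduces to the trivial bound $\min\le\text{average}$ applied along a geodesic. The only points needing a little care are that the estimate must use $d(x_1,x_2)\le\diam G$ for the (possibly non-vertex) endpoints of the side, which is immediate from the definition of $\diam G$ as a supremum over all pairs of points of the metric graph, and that sharpness demands an explicit equality case rather than a mere limiting family, which the cycles supply.
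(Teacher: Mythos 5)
Your argument is correct: the identity $d(p,x_1)+d(p,x_2)=d(x_1,x_2)$ for $p$ on the geodesic side, combined with $\min\le$ average and the fact that $x_1,x_2$ lie on the other two sides, gives $\d(T)\le\frac12\,\diam G$ for every geodesic triangle, and the cycles $C_n$ (with $\d(C_n)=n/4$ and $\diam C_n=n/2$) settle sharpness. The present paper only quotes this result from \cite{RSVV} without reproducing a proof, but your argument is precisely the standard one used there, so there is nothing to add.
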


Denote by $J(G)$ the set of vertices and midpoints of edges in $G$.
As usual, by \emph{cycle} we mean a simple closed curve, \emph{i.e.}, a path with different vertices,
unless the last one, which is equal to the first vertex.

\begin{theorem}[Theorem 2.6 in \cite{BRS}]
\label{t:multk/4}
For every hyperbolic graph $G$, $\d(G)$ is a multiple of $1/4$.
\end{theorem}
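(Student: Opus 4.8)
The plan is to reduce the global supremum defining $\delta(G)$ to a discrete set of quarter-integer values, exploiting the piecewise-linear nature of distance functions in a graph all of whose edges have length $1$; throughout, recall that $J(G)$ denotes the set of vertices and edge-midpoints. First I would reduce to triangles whose corners lie in $J(G)$, i.e. I would prove
\[
\delta(G)=\sup\{\,\delta(T):T=\{x,y,z\}\text{ is a geodesic triangle with }x,y,z\in J(G)\,\}.
\]
The idea is that, given an arbitrary triangle, one may displace each corner to a nearest point of $J(G)$ while re-choosing the three sides, without decreasing the thin constant, using that $p\mapsto d_G(p,Y)$ is $1$-Lipschitz for any fixed set $Y$. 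This reduction is the step I expect to be the main obstacle, since controlling how all three sides deform simultaneously as the corners move requires a careful and somewhat lengthy geometric argument; the rest of the proof is then a clean quantization.

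Granting the reduction, I would fix such a triangle $T$, a side $\gamma=[xy]$, and write $Y=[xz]\cup[zy]$ for the union of the other two sides, and then study $f(s):=d_G(\gamma(s),Y)$ along an arc-length parametrization of $\gamma$. Since each edge has length $1$, on the interior of every edge of $\gamma$ the distance to any fixed target is affine with slope $\pm1$; hence $f$ is piecewise linear with slopes in $\{-1,+1\}$, its breakpoints occurring only at graph-vertices on $\gamma$ or at points where the nearest point of $Y$ switches. In particular the maximum of $f$ is attained, and there are two cases. If it is attained at a graph-vertex $p^*\in V(G)$, then $f(p^*)=d_G(p^*,Y)$; computing this distance edge-by-edge on $Y$ (whose corners $x,z,y$ now lie in $J(G)$) shows every candidate value is an integer or a half-integer, so $f(p^*)\in\tfrac12\mathbb{Z}$.

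If instead the maximum is attained at a switch point $p^*$ in the interior of an edge $[A,B]$ of $\gamma$, with $A,B\in V(G)$ consecutive at parameters $s_A$ and $s_B=s_A+1$, then there are two nearest points $q_1,q_2\in Y$ with $d_G(p^*,q_1)=d_G(p^*,q_2)=f(p^*)=:\delta$, where the geodesic to $q_1$ exits $[A,B]$ through $A$ and the geodesic to $q_2$ exits through $B$. Then $\delta=(s^*-s_A)+d_G(A,q_1)=(s_B-s^*)+d_G(B,q_2)$, and adding gives
\[
2\delta=\big((s^*-s_A)+d_G(A,q_1)\big)+\big((s_B-s^*)+d_G(B,q_2)\big)=1+d_G(A,q_1)+d_G(B,q_2).
\]
Since $A$ lies on the geodesic $[p^*q_1]$, any $q'\in Y$ with $d_G(A,q')<d_G(A,q_1)$ would give $d_G(p^*,q')<d_G(p^*,q_1)$, contradicting minimality; hence $q_1$ is also a nearest point of $Y$ to $A$, so $d_G(A,q_1)=d_G(A,Y)\in\tfrac12\mathbb{Z}$ by the same edge-by-edge computation, and likewise $d_G(B,q_2)=d_G(B,Y)\in\tfrac12\mathbb{Z}$. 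Therefore $2\delta\in\tfrac12\mathbb{Z}$, i.e. $\delta\in\tfrac14\mathbb{Z}$.

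Finally I would assemble these facts. In either case the contribution of each side to $\delta(T)$ is a multiple of $1/4$, so $\delta(T)\in\tfrac14\mathbb{Z}$ for every triangle with corners in $J(G)$. Because $G$ is hyperbolic, all these values are bounded above by $\delta(G)<\infty$, so they lie in the \emph{finite} set $\tfrac14\mathbb{Z}\cap[0,\delta(G)]$. Consequently the supremum over this class is attained and equals one of these quarter-integers, which by the reduction of the first paragraph gives $\delta(G)\in\tfrac14\mathbb{Z}$, as desired.
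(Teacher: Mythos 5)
First, a caveat about the comparison you asked for: the paper does not prove this statement at all --- it is imported verbatim as Theorem 2.6 of \cite{BRS}, so there is no internal proof to measure your argument against. Judged on its own terms, the quantization half of your write-up (the analysis of $f(s)=d_G(\gamma(s),Y)$ as a piecewise linear function with slopes $\pm 1$, the identity $2\delta=1+d_G(A,Y)+d_G(B,Y)$ at an interior switch point, and the observation that $d_G(A,Y)\in\tfrac12\mathbb{Z}$ because the infimum of the distance from a vertex to a union of full edges and terminal half-edges is attained at a vertex or at a corner in $J(G)$) is correct and is the standard way this quarter-integrality is obtained. There is one small unhandled case there: if a corner, say $x$, is a midpoint lying on the same edge $[A,B]$ as the switch point $p^*$, the nearest point of $Y$ on the ``$A$-side'' is $x$ itself and the geodesic $[p^*q_1]$ does not exit through $A$, so your identity must be replaced by $2\delta = \tfrac12 + d_G(B,Y)$; the conclusion $\delta\in\tfrac14\mathbb{Z}$ survives, but as written the case is not covered.

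The genuine gap is your first step, and you have correctly identified it as the crux while not actually supplying it. The reduction
\[
\delta(G)=\sup\{\delta(T):T \text{ a geodesic triangle with corners in } J(G)\}
\]
is a theorem of essentially the same depth as the statement being proved (it is Theorem 2.5 of \cite{BRS}, and its refinement is quoted in this paper as Theorem \ref{t:TrianVMp}), and the one-line plan you offer for it does not work as stated. Displacing each corner to a nearest point of $J(G)$ and then ``re-choosing the three sides'' gives no control whatsoever on the thin constant: geodesics between the new corners need not resemble the old sides anywhere except near their endpoints, so the $1$-Lipschitz property of $p\mapsto d_G(p,Y)$ is applied to a set $Y$ that has been replaced wholesale, not perturbed. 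The actual argument in \cite{BRS} keeps the original sides intact away from the corners and performs a local surgery near each corner (extending or truncating each side within the edge containing that corner, and handling the case where two sides leave a corner through different endpoints of that edge), which is where all the work lies. Until that reduction is proved, your argument establishes only that $\delta(T)\in\tfrac14\mathbb{Z}$ for the special triangles, not that $\delta(G)$ itself is such a value.
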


\begin{theorem}[Theorem 2.7 in \cite{BRS}]
\label{t:TrianVMp}
For any hyperbolic graph $G$, there exists
a geodesic triangle $T = \{x, y, z\}$ that is a cycle with $x, y, z \in J(G)$ and $\d(T) = \d(G)$.
\end{theorem}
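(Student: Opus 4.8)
The plan is to start from near-extremal geodesic triangles and reduce them, without losing thinness, to triangles of the required special form, and then to use the fact that $\d(G)$ is quantized (Theorem \ref{t:multk/4}) to replace the supremum by a maximum. First I would fix notation: since $G$ is hyperbolic, $\d(G)<\infty$, so by definition of the hyperbolicity constant there is a sequence of geodesic triangles $T_n=\{x_n,y_n,z_n\}$ with $\d(T_n)\to \d(G)$. For each $n$, I would choose a point $p_n$ on one of the sides, say $p_n\in [x_ny_n]$, realizing $d(p_n,[x_nz_n]\cup[y_nz_n])=\d(T_n)$; such a maximizer exists because each side is a compact geodesic segment and the distance function to a closed set is continuous.

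The first reduction is to geodesic triangles that are cycles. If the three geodesics of $T_n$ do not already form a simple closed curve, then two of them share a nondegenerate common subarc. I would cut $T_n$ at the extreme points where the sides branch and discard the overlapping portions. The key observation is that a point lying on a subarc common to two sides has distance $0$ to another side, so the extremal point $p_n$ never lies on a discarded part; hence the surviving configuration is again a geodesic triangle (in the degenerate case a bigon, which is itself a geodesic triangle with two coinciding vertices), it is a cycle, and its thin constant is at least $\d(T_n)$. Thus I may assume from now on that each $T_n$ is a cycle.

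The second reduction, snapping the vertices to $J(G)$, is the technical heart. Along the geodesic $[x_ny_n]$ the function $f(q)=d\big(q,[x_nz_n]\cup[y_nz_n]\big)$ is $1$-Lipschitz and piecewise linear with slopes in $\{-1,0,+1\}$, its break points occurring only at vertices of $G$, at points where the nearest projection onto the opposite sides changes, or at points equidistant from two such projections. Exploiting this piecewise-linear structure, together with the freedom to choose among (possibly non-unique) geodesics, I would move the endpoints $x_n,y_n,z_n$ and the extremal point $p_n$ to vertices or midpoints of edges, that is, into $J(G)$, in a way that does not decrease $f(p_n)$. Carrying this out on all three sides yields a cycle-triangle $T_n'$ with $x_n',y_n',z_n'\in J(G)$ and $\d(T_n')\ge \d(T_n)$, whence $\d(G)=\sup\{\d(T): T \text{ is a cycle-triangle with vertices in } J(G)\}$.

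Finally I would upgrade this supremum to a maximum. For a cycle-triangle with vertices in $J(G)$, the extremal point and its nearest point on the opposite sides sit at distances in $\tfrac14\mathbb{Z}$ from the vertices, so $\d(T_n')$ is a multiple of $1/4$, the same quantization phenomenon as in Theorem \ref{t:multk/4}. Since $\d(T_n')\to \d(G)$ and both $\d(G)$ and every $\d(T_n')$ are multiples of $1/4$, the sequence is eventually equal to $\d(G)$, and any such $T_n'$ is the desired triangle. I expect the main obstacle to be the second reduction: verifying that the three endpoints and the extremal point can be snapped to $J(G)$ \emph{simultaneously} without ever decreasing the thin constant, since displacing one endpoint reshapes the opposite geodesics and hence the distance function $f$ on the other sides. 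A careful case analysis of the breakpoints of $f$ is exactly what makes this step go through.
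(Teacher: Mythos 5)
The paper does not actually prove this statement: it is imported verbatim as Theorem 2.7 of \cite{BRS} and used as a black box, so there is no internal proof to compare yours against. Judged on its own terms, your outline follows the strategy of the cited source (reduce to near-extremal triangles that are cycles, move the corners into $J(G)$, then use quantization by $1/4$ to upgrade the supremum to a maximum), and the first and last steps are essentially sound: an extremal point cannot lie on a subarc shared by two sides, and once the corners lie in $J(G)$ the distance from a point of one side to the union of the other two is a minimum of finitely many functions of the form $c\pm t$ with $2c\in\mathbb{Z}$, so its maximum over the side lies in $\tfrac14\mathbb{Z}$, and a sequence of such values bounded above by $\delta(G)$ and converging to it must eventually equal it.

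The genuine gap is the second reduction, which you yourself identify as the main obstacle and then defer to ``a careful case analysis.'' That step is the entire content of the theorem, and it is not a routine perturbation. The piecewise-linear structure you invoke describes $f(q)=d\big(q,[x_nz_n]\cup[y_nz_n]\big)$ as $q$ varies along a \emph{fixed} side; it says nothing about how $\max_q f(q)$ behaves when you displace a corner $x_n$ along its edge, because displacing $x_n$ replaces both geodesics incident to it by geodesics between new endpoints, and since geodesics in a graph are not unique these need not contain, or stay uniformly close to, the old ones; the modified curve may also cease to be a geodesic triangle or a cycle. Moreover, your claim that the relevant breakpoints occur at multiples of $1/4$ presupposes that the opposite corners already lie in $J(G)$, which is circular at the stage where you are trying to put them there. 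What is needed, and what your proposal asserts rather than supplies, is an explicit construction showing that for every cycle-triangle $T$ there is a cycle-triangle $T'$ with corners in $J(G)$ and $\delta(T')\ge\delta(T)$ (or at least $\delta(T')\ge\delta(T)-\varepsilon_n$ with $\varepsilon_n\to 0$). That construction is the substance of the argument in \cite{BRS}, and without it the proof does not close.
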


\begin{theorem}\label{t:Cota 1}
If $G_1$ and $G_2$ are non-trivial graphs, then $\d(G_1\circ G_2)\ge 1$.
\end{theorem}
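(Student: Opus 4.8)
The plan is to reduce the lower bound to the already-computed value $\delta(K_2\circ K_2)=1$, by locating an isometric copy of $K_2\circ K_2$ inside $G_1\circ G_2$ and then applying the monotonicity of the hyperbolicity constant under isometric subgraphs (Lemma \ref{l:subgraph}). The conceptual point is that this ``gadget'' embeds isometrically no matter what the internal structure of $G_1$ and $G_2$ is, so no case analysis on $G_1,G_2$ is needed.

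First I would note that an edge of a graph together with its two endpoints is an isometric copy of $K_2$: if $[u_1,u_2]\in E(G_1)$, then this edge is a geodesic, so its intrinsic metric coincides with $d_{G_1}$ on it, and hence $d_{K_2}(x,y)=d_{G_1}(x,y)$ for all points $x,y$ of the edge. Since $G_1$ is non-trivial and connected it has at least one edge, so it contains an isometric subgraph $\Gamma_1\simeq K_2$; the same argument gives an isometric subgraph $\Gamma_2\simeq K_2$ of $G_2$.

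Next I would invoke Proposition \ref{p:IsomLex} with this $\Gamma_1$ (which is non-trivial) and this $\Gamma_2$: it yields that $\Gamma_1\circ\Gamma_2\simeq K_2\circ K_2$ is an isometric subgraph of $G_1\circ G_2$. By Example \ref{ex:complete} with $m=n=2$ we have $K_2\circ K_2\simeq K_4$ and $\delta(K_2\circ K_2)=1$. Therefore Lemma \ref{l:subgraph} gives $1=\delta(K_2\circ K_2)\le\delta(G_1\circ G_2)$, which is the claim.

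Once the tools of the previous sections are assembled there is essentially no obstacle; the only step deserving a line of justification is that a single edge is an isometric subgraph, which is immediate because adjacent vertices are joined by a geodesic of length $1$ (so that Proposition \ref{p:IsomLex} genuinely applies). If one preferred to avoid Proposition \ref{p:IsomLex}, the same conclusion can be reached by hand: choosing $u_1\sim u_2$ in $G_1$ and distinct $v_1,v_2\in V(G_2)$, Lemma \ref{l:dist} shows that the four vertices $(u_i,v_j)$ span an isometric $K_4$ when $v_1\sim v_2$ and an isometric $C_4$ when $d_{G_2}(v_1,v_2)\ge 2$, and in both cases the spanned subgraph has hyperbolicity constant $1$.
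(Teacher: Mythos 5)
Your proof is correct and follows essentially the same route as the paper: the paper also locates an edge (a copy of $P_2=K_2$) in each factor and deduces $\d(G_1\circ G_2)\ge\d(P_2\circ P_2)=1$ via Example \ref{ex:Pn_P2} and Theorem \ref{t:HypIsomLex}, which is exactly the combination of Proposition \ref{p:IsomLex} and Lemma \ref{l:subgraph} that you invoke. The only cosmetic difference is that you cite Example \ref{ex:complete} for the value $\d(K_2\circ K_2)=\d(K_4)=1$ instead of Example \ref{ex:Pn_P2} with $n=2$, which is the same computation.
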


\begin{proof}
Since $G_i$ is a non-trivial graph there is a subgraph $P_{2}^{i}$ in $G_i$ isomorphic to an edge, for $i=1,2$. Hence, by Example \ref{ex:Pn_P2} and Theorem \ref{t:HypIsomLex} we have $\d(G_1\circ G_2)\ge \d(P_{2}^{1}\circ P_{2}^{2})=1$.
\end{proof}

\begin{theorem}\label{t:Cota 5/4 G1}
Let $G_2$ be any non-trivial graph and $G_1$ any graph.
If $\diam V(G_1)= 2$, then $\d(G_1\circ G_2)\ge 5/4$.
If $\diam V(G_1)\ge 3$, then $\d(G_1\circ G_2)\ge 3/2$.
\end{theorem}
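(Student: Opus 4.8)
The plan is to derive both lower bounds by exhibiting isometric subgraphs of $G_1\circ G_2$ of the form $P_n\circ P_2$, whose hyperbolicity constants are already recorded in Example \ref{ex:Pn_P2}, and then to transfer these values upward via the monotonicity of $\d$ under isometric subgraphs.

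First I would locate an isometric copy of $P_2$ in $G_2$: since $G_2$ is non-trivial it has an edge $[v,v']$, and because $d_{G_2}(v,v')=1$ is realized by that edge, the path $P_2=\{v,v'\}$ is an isometric subgraph of $G_2$. Next I would produce the relevant path in $G_1$. If $\diam V(G_1)=2$, I choose vertices $u,u'$ with $d_{G_1}(u,u')=2$ and the intermediate vertex $w$ of a geodesic joining them; then $\{u,w,u'\}$ is an isometric $P_3$ in $G_1$, since its internal distances coincide with those of $G_1$ (it is a geodesic segment, and $[u,u']\notin E(G_1)$ because $d_{G_1}(u,u')=2$). If instead $\diam V(G_1)\ge 3$, I pick vertices at distance at least $3$ and take the four consecutive vertices $a_0,a_1,a_2,a_3$ on the first three edges of a geodesic between them; this is a geodesic segment, so $\{a_0,a_1,a_2,a_3\}$ is an isometric $P_4$ in $G_1$.

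In either case Proposition \ref{p:IsomLex} applies with $\G_1\in\{P_3,P_4\}$ (non-trivial) and $\G_2=P_2$, so $\G_1\circ\G_2$ is an isometric subgraph of $G_1\circ G_2$. Lemma \ref{l:subgraph} together with Example \ref{ex:Pn_P2} then yields
$$\d(G_1\circ G_2)\ge\d(P_3\circ P_2)=\tfrac54 \qquad\text{and}\qquad \d(G_1\circ G_2)\ge\d(P_4\circ P_2)=\tfrac32$$
in the two cases, respectively.

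The only delicate point is checking that the chosen paths are genuinely \emph{isometric} subgraphs and not merely subgraphs; this reduces to the standard fact that every subpath of a geodesic is itself a geodesic, reinforced by the absence of shortcut edges that the distance equalities guarantee. Once this is settled the argument has no real obstacle, since Proposition \ref{p:IsomLex}, Lemma \ref{l:subgraph}, and the known values in Example \ref{ex:Pn_P2} immediately close both inequalities.
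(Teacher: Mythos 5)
Your proposal is correct and follows essentially the same route as the paper: the paper also finds an isometric $P_3$ (resp.\ $P_4$) in $G_1$ and a $P_2$ in $G_2$, and invokes Example \ref{ex:Pn_P2} together with the isometric-subgraph machinery (the paper cites Theorem \ref{t:HypIsomLex}, which is just Lemma \ref{l:subgraph} plus Proposition \ref{p:IsomLex} packaged together). Your extra care in verifying that the chosen paths are genuinely isometric is a welcome, but not substantively different, elaboration.
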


\begin{proof}
Assume that $\diam V(G_1)= 2$.
Since $G_2$ is a non-trivial graph there is a subgraph $P_2$ in $G_2$ isomorphic to an edge. Besides, since $\diam V(G_1)= 2$ then there is an isometric subgraph in $G_1$ isomorphic to a path $P_3$ with $3$ vertices. Example \ref{ex:Pn_P2} and Theorem \ref {t:HypIsomLex} give $5/4=\d(P_3\circ P_2)\leq \d(G_1 \circ G_2)$.

If $\diam V(G_1)\ge 3$, then a similar argument replacing $P_3$ by $P_4$ gives $\d(G_1\circ G_2)\ge 3/2$.
\end{proof}

\begin{theorem}\label{t:Cota 5/4 G2}
If $G_1$ is any non-trivial graph and $G_2$ is any graph with $\diam G_2 > 2$, then $\d(G_1\circ G_2)\ge 5/4$.
\end{theorem}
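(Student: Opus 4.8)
The plan is to exhibit a geodesic bigon in $G_1\circ G_2$ whose thin constant is at least $5/4$; since a bigon is a geodesic triangle with two of its vertices coinciding, this already gives $\delta(G_1\circ G_2)\ge 5/4$. As $G_1$ is non-trivial and connected it has an edge $[a,b]\in E(G_1)$, and I will keep the whole bigon inside the two fibres $\{a\}\circ G_2$ and $\{b\}\circ G_2$, using that $(a,\cdot)$ and $(b,\cdot)$ are always adjacent (Definition \ref{d:lexprod}). The first ingredient is a metric configuration in $G_2$: I claim $\diam G_2>2$ forces an edge $[w,w']\in E(G_2)$ and a vertex $y_2\in V(G_2)$ with $d_{G_2}(w,y_2)\ge 2$ and $d_{G_2}(w',y_2)\ge 2$. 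Then, writing $y_1$ for the midpoint of $[w,w']$, we get $d_{G_2}(y_1,y_2)\ge 5/2$, and replacing $y_2$ by the vertex at distance exactly $5/2$ from $y_1$ along a geodesic $[y_1y_2]$ we may assume $d_{G_2}(y_1,y_2)=5/2$ with $w'$ the endpoint of $y_1$'s edge lying on $[y_1y_2]$. I would prove the claim by a short case analysis: if $\diam V(G_2)\ge 3$ one takes $y_2$ at distance $3$ from a vertex and any incident edge works; if $\diam V(G_2)=2$, the excess $\diam G_2>2$ must be produced with a midpoint, and inspecting the few vertex-distances among the edge(s)/vertex realizing it yields the pair (the essential point being that a distance equal to $5/2$ in a graph occurs only between a midpoint and a vertex).

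Now I build the bigon. Set $x:=(a,y_1)$ and $y:=(a,y_2)$. By Lemma \ref{l:geodesicas}, since $d_{G_2}(y_1,y_2)=5/2$, the path $\gamma_1:=\{a\}\times[y_1y_2]$ is a geodesic of length $5/2$, so $d_{G_1\circ G_2}(x,y)=5/2$. For the second side, fix any $v_0\in V(G_2)$ and put
\[
\gamma_2:=\big(\{a\}\times[y_1,w]\big)\,\cup\,[(a,w),(b,v_0)]\,\cup\,[(b,v_0),(a,y_2)],
\]
where $w$ is the endpoint of $y_1$'s edge \emph{not} on $[y_1y_2]$. The three pieces have lengths $1/2$, $1$ and $1$ (the last is a single edge because $y_2$ is a vertex and $[a,b]\in E(G_1)$), so $L(\gamma_2)=5/2=d_{G_1\circ G_2}(x,y)$ and $\gamma_2$ is a geodesic joining $x$ and $y$. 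Thus $\{x,y\}$ with $\gamma_1,\gamma_2$ is a geodesic bigon.

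The heart of the argument is to show that the midpoint $M:=(a,m)$ of $\gamma_1$ (so $m$ is the midpoint of $[y_1y_2]$, with $d_{G_2}(m,y_1)=d_{G_2}(m,y_2)=5/4$) satisfies $d_{G_1\circ G_2}(M,\gamma_2)\ge 5/4$. The position of $m$ is rigid: travelling $5/4=1+1/4$ from the vertex $y_2$ along $[y_1y_2]$ places $m$ in the interior of the edge $[w',u]$, where $u$ is the neighbour of $y_2$ on $[y_1y_2]$, at distance $1/4$ from $u$ and $3/4$ from $w'$ (note $u\ne w$, since otherwise $d_{G_2}(y_1,y_2)\le 3/2$). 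Hence $d_{G_2}(m,V(G_2))=1/4$, so any path from $M$ to the fibre $\{b\}\circ G_2$ must first reach a vertex of $\{a\}\circ G_2$ (the nearest being $(a,u)$, at distance $1/4$) and then cross in one edge, giving $d_{G_1\circ G_2}(M,(b,w_0))=5/4$ for every $w_0$. The key inequality is $d_{G_2}(m,w)\ge 5/4$: for any vertex $z$, $d_{G_2}(m,z)=\min\{d_{G_2}(w',z)+3/4,\,d_{G_2}(u,z)+1/4\}$, and for $z=w$ the first term is $7/4$ while the second is $\ge 1+1/4=5/4$ since $d_{G_2}(u,w)\ge 1$. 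Feeding $d_{G_2}(m,w)\ge 5/4$ and $d_{G_2}(m,w')=3/4$ into the same "exit through $w$ or $w'$" computation, each point $(a,p)$ with $p\in[y_1,w]$ (say $s=d_{G_2}(y_1,p)\le 1/2$) has $d_{G_2}(m,p)=\min\{(1/2+s)+3/4,\,(1/2-s)+d_{G_2}(m,w)\}\ge 5/4$, and by Lemma \ref{l:geodesicas} this equals $d_{G_1\circ G_2}(M,(a,p))$. Finally, the remaining points of $\gamma_2$ lie on the two crossing edges, whose endpoints are all at distance $\ge 5/4$ from $M$ and along which $d_{G_1\circ G_2}(M,\cdot)$ is concave, hence minimized at an endpoint; so all of $\gamma_2$ stays at distance $\ge 5/4$ from $M$, and $\delta(G_1\circ G_2)\ge 5/4$.

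The main obstacle, and the step needing the right idea, is the choice of the return path $\gamma_2$: routing it through the \emph{far} endpoint $w$ of $y_1$'s edge is exactly what keeps its layer-$a$ stub away from $M$, whereas routing through $w'$ would pass within $3/4$ of $M$ and ruin the bound. The secondary obstacle is the combinatorial claim that $\diam G_2>2$ supplies the edge-plus-vertex configuration at distance $5/2$; the parity phenomenon there (distance $5/2$ pairs a midpoint with a vertex) is precisely what makes the rigid position of $m$ available, and with it both $d_{G_2}(m,V(G_2))=1/4$ and $d_{G_2}(m,w)\ge 5/4$.
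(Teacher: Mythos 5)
Your proof is correct and follows essentially the same route as the paper: both construct a geodesic bigon joining $(v_0,x)$ and $(v_0,y)$, where $x$ is a midpoint and $y$ a vertex of $G_2$ at distance $5/2$, with one side staying in the fibre $\{v_0\}\circ G_2$ and the other exiting through the far endpoint of $x$'s edge and a vertex of an adjacent fibre, and then observe that the midpoint of the first side is at distance $5/4$ from the second. Your write-up simply makes explicit two steps the paper leaves implicit, namely the extraction of the midpoint--vertex pair at distance $5/2$ from $\diam G_2>2$ and the verification that $d(M,\gamma_2)\ge 5/4$.
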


\begin{proof}
Since $\diam G_2\ge 5/2$ we have that there exist a midpoint $x\in J(G_2)\setminus V(G_2)$ and a vertex $y\in V(G_2)$ such that $d_{G_2}(x,y)=5/2$. Hence, by Lemma \ref{l:geodesicas} we have that $\g_1:=\{v_0\}\times [xy]$ is a geodesic in $G_1\circ G_2$ joining the points $(v_0,x)$ and $(v_0,y)$ for some $v_0\in V(G_1)$. Without loss of generality we can assume that $(v_0,x)\in [A_1,A_2]$ such that $A_1\in \g_1$. Denote it by $\g_2:=[(v_0,x)A_2]\cup [A_2W]\cup [W(v_0,y)]$ where $W\in V(\{v_1\}\circ G_2)$ with $[v_0,v_1]\in E(G_1)$. Therefore, $L(\g_2)=5/2$ and $\g_2$ is a geodesic in $G_1\circ G_2$ joining the points $(v_0,x)$ and $(v_0,y)$. Now we have a geodesic bigon $B:=\{\g_1,\g_2\}$ in $G_1\circ G_2$. If $p$ is the midpoint of $\g_1$, then $d_{G_1\circ G_2}(p,\g_2)= 5/4$ and we conclude that $\d(G_1\circ G_2)\ge \d(B) = 5/4$.
\end{proof}

\begin{theorem}\label{t:CotaSup}
Let  $G_1$ be any non-trivial graph and $G_2$ any  graph. Then we have $\d(G_1 \circ G_2) \leq \d(G_1)+3/2$.
\end{theorem}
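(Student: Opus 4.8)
The plan is to bound the thinness of an arbitrary geodesic triangle, so that taking the supremum over all triangles yields hyperbolicity together with the estimate (in particular there is no circularity with the characterization). If $G_1$ is not hyperbolic then $\delta(G_1)=\infty$ and the inequality is vacuous, so I assume $\delta(G_1)<\infty$. Fix a geodesic triangle $T=\{x,y,z\}$ in $G_1\circ G_2$ and a point $p\in[xy]$; I must show $d_{G_1\circ G_2}(p,[yz]\cup[zx])\le \delta(G_1)+3/2$. Since $x\in[zx]$, $y\in[yz]$ and $z\in[yz]\cap[zx]$, we have $d_{G_1\circ G_2}(p,[yz]\cup[zx])\le\min\{d(p,x),d(p,y),d(p,z)\}$, so I may assume $p$ lies at distance greater than $\delta(G_1)+3/2$ from each of the three vertices. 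In particular $d(x,y)=d(x,p)+d(p,y)>3$, hence $L([xy])>3$ and Lemma~\ref{l:proy} guarantees that $\pi([xy])$ is a geodesic of $G_1$.

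Next I would pass to the factor $G_1$ through the $1$-Lipschitz projection $\pi$ (a consequence of Lemma~\ref{l:dist}). Writing $\bar x=\pi(x)$, $\bar y=\pi(y)$, $\bar z=\pi(z)$ and $\bar p=\pi(p)$, I form a geodesic triangle $\bar T$ in $G_1$ whose side from $\bar x$ to $\bar y$ is $\pi([xy])$, and whose two remaining sides are $\pi([yz])$ and $\pi([zx])$ whenever these are geodesics (that is, when the corresponding side of $T$ is longer than $3$, again by Lemma~\ref{l:proy}) and arbitrary geodesics $[\bar y\bar z]$, $[\bar z\bar x]$ otherwise. Since $\bar p\in\pi([xy])$, hyperbolicity of $G_1$ provides a point $\bar q$ on one of the two remaining sides, say the one associated with $[zx]$, with $d_{G_1}(\bar p,\bar q)\le\delta(G_1)$.

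The core of the argument is to lift $\bar q$ back to a point of the actual side $[zx]$ lying within $\delta(G_1)+3/2$ of $p$. I would choose a vertex $Q^{*}$ of $[zx]$ whose projection is close to $\bar q$ but distinct from the $G_1$-vertex $a:=\pi(P)$ nearest to $p$ along $[xy]$ (with $d(p,P)=s\le 1/2$). The decisive feature of the lexicographic product, recorded in Lemma~\ref{l:dist}, is that an edge joining two vertices lying over distinct vertices of $G_1$ costs exactly the $G_1$-distance, independently of the $G_2$-coordinates. Hence a single ``cross-fibre'' hop from $P$ to $Q^{*}$ absorbs any discrepancy of $G_2$-coordinates for free, and one obtains $d(p,Q^{*})\le s+d_{G_1}(a,\pi(Q^{*}))\le 2s+\delta(G_1)+\tfrac12\le \delta(G_1)+\tfrac32$, the half-units accounting precisely for the neighbourhood radius of Lemma~\ref{l:vecindad}.

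The main obstacle is guaranteeing the existence of such a vertex $Q^{*}$ of $[zx]$ projecting off $a$ and within $1/2$ of $\bar q$, and handling the configurations in which the relevant side of $T$ is short (length at most $3$), where its projection need not be a geodesic. In the long-side case the geodesic $\pi([zx])$ passes through $\bar q$ and immediately leaves $a$, so the required neighbouring vertex is available; in the short-side case I would instead invoke Corollary~\ref{c:proy3}, which gives $\diam(\pi([zx]))<3$, to confine the situation near the endpoints $\bar x,\bar z$ and fall back on the triangle vertices $x,z$ (already excluded as close points, which forces the other side to be the relevant one). Verifying that the cross-fibre estimate is always achievable without ever spending the $3/2$ twice is where the delicacy lies, and it is exactly this single use of Lemma~\ref{l:vecindad} that makes the constant $3/2$ sharp.
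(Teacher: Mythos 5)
Your skeleton is the paper's own: project onto $G_1$ (realized there as $G_1\circ\{w\}$), apply $\delta(G_1)$-thinness to a triangle whose $[xy]$-side is the genuine projection $\pi([xy])$ (legitimate since $L([xy])>3$ and Lemma~\ref{l:proy} applies), and lift the close point back at a cost of $1/2$ (to reach the nearest vertex $V_p$) plus one cross-fibre edge of cost exactly $1$, which by Lemma~\ref{l:dist} absorbs the $G_2$-coordinate for free. That accounting is exactly how the paper gets $\delta(G_1)+3/2$. However, the step you yourself flag as delicate is where the gap lies, and your proposed fix does not work. In the short-side case ($L([zx])\le 3$), the point $\bar q$ lives on an auxiliary geodesic $[\bar z\bar x]$ rather than on $\pi([zx])$, and your fallback is to say that $\bar q$ being within $3/2$ of $\bar x$ or $\bar z$ contradicts the standing assumption that $p$ is far from $x$ and $z$. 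But the projection $\pi$ is distance \emph{non-increasing}, so $d_{G_1\circ G_2}(p,x)>\delta(G_1)+3/2$ gives no lower bound on $d_{G_1}(\bar p,\bar x)$; conversely, $d_{G_1}(\bar p,\bar x)\le\delta(G_1)+3/2$ only yields $d_{G_1\circ G_2}(p,x)\le\delta(G_1)+5/2$ or so (one may have to change fibre near $x$ and pay for the half-edges), which misses the target. So you cannot ``force the other side to be the relevant one,'' and the configuration is not actually excluded.

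The paper closes exactly this case differently: assuming no vertex of the \emph{actual} projected sides $\pi([yz])\cup\pi([zx])$ is adjacent to $\a$ (your $\bar q$), one gets $d(\a,\pi(x))\ge 3/2$ and $d(\a,\pi(z))\ge 3/2$, hence $d(\pi(x),\pi(z))\ge 3$ because $\a$ lies on a geodesic between them; this contradicts Corollary~\ref{c:proy3}, which bounds $\diam(\pi([zx]))<3$ when $\pi([zx])$ is not a geodesic. Therefore some vertex $(v_\a,w)$ of the true projected side is adjacent to $\a$, and lifting to a vertex of $[yz]\cup[zx]$ over $v_\a$ costs exactly one edge, preserving the $\delta(G_1)+3/2$ total. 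You would need to supply this (or an equivalent) argument. A second, smaller wrinkle: in the long-side case the vertex of $[zx]$ nearest to the point over $\bar q$ may itself project to $a$, in which case your chain $d_{G_1}(a,\pi(Q^*))\le s+\delta(G_1)+1/2$ fails for the neighbouring vertex you must take instead; the conclusion survives because that neighbour satisfies $d_{G_1}(a,\pi(Q^*))=1$ directly, but the inequality as written should be repaired.
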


\begin{proof}
If $G_1$ is not hyperbolic, then $\d(G_1)=\infty$, and so, Theorem \ref{t:sublex} gives the result (with equality). Assume now that $G_1$ is hyperbolic.
By Theorem \ref{t:TrianVMp} it suffices to consider geodesic triangles $T=\{x,y,z\}$ in $G_1 \circ G_2$ that are cycles with $x,y,z \in J(G_1 \circ G_2)$. Let  $\g_1 := [x y]$, $\g_2 := [y z]$ and $\g_3 := [zx]$. It suffices to prove that $d_{G_1 \circ G_2}(p, \g_2 \cup \g_3) \leq \d(G_1) + 3/2$ for every $p \in \g_1$. If $d_{G_1\circ G_2}(p,\{x,y\})\leq 3/2$, then $d_{G_1 \circ G_2}( p , \g_2 \cup \g_3 ) \leq d_{G_1 \circ G_2}( p , \{x,  y\}) \leq 3/2$.

Assume that $d_{G_1 \circ G_2}(p, \{x,y\} ) > 3/2$; then $L(\g_1)>3$. Let $V_p := (v,w)$ be a closest vertex to $p$ in $\g_1$. Consider the canonical projection $\pi:G_1\circ G_2\longrightarrow G_1\circ \{w\}$. By Lemma \ref{l:proy}, $\pi(\g_1)$ is a geodesic in $G_1\circ \{w\}$ joining the points $\pi(x)$ and $\pi(y)$.

If $\pi(\g_2)$ and $\pi(\g_3)$ are geodesics in $G_1\circ \{w\}$, then there is a point $\a\in \pi(\g_2)\cup \pi(\g_3)$ such that $d_{G_1\circ \{w\}}(V_p,\a)\leq \d(G_1)$. Assume that $\a\in V\big(\pi(\g_2)\cup \pi(\g_3)\big)$. Since $L(\g_1) > 3$ and $\g_2\cup \g_3$ joins $x$ and $y$, by Lemma \ref{l:+3vert}, $\pi(\g_2)\cup \pi(\g_3)$ contains at least three vertices; hence, there exists a vertex $(v_{\a},w)\in V(\pi(\g_2)\cup \pi(\g_3))$ such that $[\a,(v_{\a},w)]\in E(G_1\circ \{w\})$. Let $V_{\a}$ be a vertex in $\big(\{v_{\a}\}\circ G_2\big)\cap \big(\g_2\cup\g_3\big)$. Thus, $[\a,V_{\a}]\in E(G_1\circ G_2)$ and
$$d_{G_1\circ G_2}(p,\g_2\cup \g_3)\leq d_{G_1\circ G_2}(p,V_p)+d_{G_1\circ \{w\}}(V_p,\a)+d_{G_1\circ G_2}(\a,V_{\a})\leq \d(G_1)+3/2.$$
If $\a\notin V(\pi(\g_2)\cup \pi(\g_3))$, then $\a\in \{\pi(x),\pi(y)\}$ and $\a$ is a midpoint in $G_1\circ \{w\}$. Without loss of generality we can assume that $\a=\pi(x)$ and, consequently, $x$ is a midpoint in $G_1\circ G_2$. Let $V_{x}$ be the closest vertex to $x$ in $\g_2\cup \g_3$ and $v_x$ the closest vertex to $\pi(x)$ in $\pi(\g_1)$. Hence, $[V_x,v_x]\in E(G_1\circ G_2)$, $d_{G_1\circ \{w\}}(V_p,v_x)\leq \d(G_1)- 1/2$ and
$$d_{G_1\circ G_2}(p,\g_2\cup \g_3)\leq d_{G_1\circ G_2}(p,V_p)+d_{G_1\circ \{w\}}(V_p,v_x)+d_{G_1\circ G_2}(v_x,V_{x})\leq \d(G_1)+1.$$

If $\pi(\g_2)$ and $\pi(\g_3)$ are not geodesics in $G_1\circ \{w\}$, then there is a point $\a\in [\pi(x)\pi(z)]\cup [\pi(z)\pi(y)]$ such that $d_{G_1\circ \{w\}}(V_p,\a)\leq \d(G_1)$. Notice that, if $\a$ is not a vertex in $G_1\circ \{w\}$ then we repeat the previous argument and obtain the result. Assume now that $\a\in V([\pi(x)\pi(z)]\cup [\pi(z)\pi(y)])$; by symmetry, we can assume that $\a\in V([\pi(x)\pi(z)])$. If $\a\in \pi(\g_2)\cup \pi(\g_3)$, then the previous argument gives $d_{G_1\circ G_2}(p, \g_2 \cup \g_3)\leq \d(G_1)+3/2$. Assume now that $\a \notin \pi(\g_2)\cup \pi(\g_3)$.  Seeking for a contradiction assume that there is not a vertex $(v_{\a},w)\in V(\pi(\g_2)\cup \pi(\g_3))$ such that $[\a,(v_{\a},w)]\in E(G_1\circ \{w\})$.
Then $d_{G_1\circ \{w\}}(\a , V(\pi(\g_2)\cup\pi(\g_3)))\ge 2$; hence, $d_{G_1\circ \{w\}}(\a,\pi(x))\ge3/2$ and $d_{G_1\circ \{w\}}(\a,\pi(z))\ge3/2$.
However, by Corollary \ref{c:proy3} we have $d_{G_1\circ \{w\}}(\pi(x),\pi(z))=d_{G_1\circ \{w\}}(\pi(x),\a)+d_{G_1\circ \{w\}}(\a,\pi(z))<3$, which is a contradiction. Therefore, there exists a vertex $(v_{\a},w)\in V(\pi(\g_2)\cup \pi(\g_3))$ such that $[\a,(v_{\a},w)]\in E(G_1\circ \{w\})$. Let $V_{\a}$ be a vertex in $\big(\{v_{\a}\}\circ G_2\big)\cap \big(\g_2\cup\g_3\big)$. Then $[\a,V_{\a}]\in E(G_1\circ G_2)$ and
$$d_{G_1\circ G_2}(p,\g_2\cup \g_3)\leq d_{G_1\circ G_2}(p,V_p)+d_{G_1\circ \{w\}}(V_p,\a)+d_{G_1\circ G_2}(\a,V_{\a})\leq \d(G_1)+3/2.$$

In both cases, $\pi(\g_2)$ is a geodesic in $G_1\circ \{w\}$ but $\pi(\g_3)$ is not a geodesic in $G_1\circ \{w\}$, and $\pi(\g_3)$ is a geodesic in $G_1\circ \{w\}$ but $\pi(\g_2)$ is not a geodesic in $G_1\circ \{w\}$, a similar argument gives the inequality.
\end{proof}

\begin{remark}\label{r:midpoint}
Let $G_1$ be any hyperbolic graph which is not a tree and let $G_2$ be any graph. The argument in the proof of Theorem \ref{t:CotaSup} gives that if $\d(G_1\circ G_2)= \d(G_1) + 3/2$ then there is a geodesic
triangle $T = \{x, y, z\}$ with $x, y, z \in J(G_1 \circ G_2)$ and a midpoint $p \in [xy]$ such that $d _{G_1\circ G_2} (p, [xz] \cup [zy]) = \d(G_1) + 3/2$. Besides, $d_{G_1\circ \{w\}}(V_p,[\pi(x)\pi(z)]\cup [\pi(z)\pi(y)])=\d(G_1)$ and the distance is attained in a vertex $\a\in [\pi(x)\pi(z)]\cup [\pi(z)\pi(y)]$.
\end{remark}

Example \ref{ex:Pn_P2} and Theorem \ref{th:hyp3/2_diam2} show that the equality in Theorem \ref{t:CotaSup} is attained.

We obtain the following consequence of Theorem \ref{t:sublex} and Theorem \ref{t:CotaSup}.

\begin{theorem}\label{t:InfSup}
Let  $G_1$ be any non-trivial graph and $G_2$ any  graph. Then
$$\d(G_1) \leq \d(G_1 \circ G_2) \leq \d(G_1)+3/2.$$
\end{theorem}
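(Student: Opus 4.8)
The plan is to prove Theorem~\ref{t:InfSup} by simply combining the two inequalities that have already been established, since this theorem is explicitly stated to be a consequence of Theorems~\ref{t:sublex} and~\ref{t:CotaSup}. No new geometric argument is needed; the work is entirely in invoking the right prior results with the right hypotheses.

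First I would address the lower bound. Theorem~\ref{t:sublex} asserts $\d(G_1) \le \d(G_1 \circ G_2)$ for \emph{any} two graphs $G_1$ and $G_2$, with no nontriviality hypothesis required. In particular it holds under the standing assumption of the present statement that $G_1$ is non-trivial and $G_2$ is arbitrary. So the left-hand inequality is immediate. Next I would establish the upper bound via Theorem~\ref{t:CotaSup}, which states precisely that if $G_1$ is a non-trivial graph and $G_2$ is any graph, then $\d(G_1 \circ G_2) \le \d(G_1) + 3/2$. The hypotheses of Theorem~\ref{t:CotaSup} match the hypotheses here exactly, so the right-hand inequality follows directly.

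The only point requiring a word of care is the case where $G_1$ is not hyperbolic, i.e.\ $\d(G_1) = \infty$. In that situation the displayed chain of inequalities should be read in the extended sense $\d(G_1) = \infty \le \d(G_1 \circ G_2) \le \infty$, which forces $\d(G_1 \circ G_2) = \infty$ as well; this is consistent with (indeed already contained in) the proof of Theorem~\ref{t:CotaSup}, whose first line handles the non-hyperbolic case with equality. When $G_1$ is hyperbolic, both bounds are finite and the statement is the genuine two-sided estimate. I do not anticipate any real obstacle here: the proof is a one-line concatenation, and the substantive content lives entirely in the two theorems being cited.

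Concretely, the proof I would write is:

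\begin{proof}
Since $G_1$ is a non-trivial graph, Theorem \ref{t:sublex} gives $\d(G_1) \le \d(G_1 \circ G_2)$ and Theorem \ref{t:CotaSup} gives $\d(G_1 \circ G_2) \le \d(G_1) + 3/2$. Combining both inequalities yields
$$\d(G_1) \le \d(G_1 \circ G_2) \le \d(G_1) + 3/2,$$
as claimed. (If $G_1$ is not hyperbolic, then $\d(G_1) = \infty$ and the inequalities are understood in the extended sense, forcing $\d(G_1 \circ G_2) = \infty$.)
\end{proof}
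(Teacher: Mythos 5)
Your proposal is correct and matches the paper exactly: the paper states Theorem \ref{t:InfSup} as an immediate consequence of Theorems \ref{t:sublex} and \ref{t:CotaSup}, which is precisely the one-line concatenation you give. Your remark on the non-hyperbolic case is a harmless extra clarification already covered by the cited theorems.
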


Theorems \ref{t:Cota 5/4 G1} and \ref{t:CotaSup} have the following consequence.

\begin{corollary}\label{c:infinity}
If $G_1$ is any infinite tree and $G_2$ is any non-trivial graph, then $\d(G_1\circ G_2)= 3/2$.
\end{corollary}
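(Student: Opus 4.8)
The statement to prove is Corollary~\ref{c:infinity}: if $G_1$ is any infinite tree and $G_2$ is any non-trivial graph, then $\d(G_1\circ G_2)=3/2$.

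The plan is to sandwich $\d(G_1\circ G_2)$ between $3/2$ from below and $3/2$ from above, using the two theorems cited just before the corollary. For the lower bound, I would first observe that an infinite tree $G_1$ necessarily contains two vertices at distance at least $3$ (indeed, any two vertices in an infinite tree can be chosen far apart, so $\diam V(G_1)\ge 3$; in fact $\diam V(G_1)=\infty$). Since $G_2$ is non-trivial, Theorem~\ref{t:Cota 5/4 G1} applies in its second case ($\diam V(G_1)\ge 3$) and yields directly $\d(G_1\circ G_2)\ge 3/2$.

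For the upper bound, the key point is that $G_1$, being a tree, is a $0$-hyperbolic space, so $\d(G_1)=0$. Since an infinite tree is a non-trivial graph, Theorem~\ref{t:CotaSup} applies and gives $\d(G_1\circ G_2)\le \d(G_1)+3/2 = 0+3/2 = 3/2$. Combining the two bounds forces $\d(G_1\circ G_2)=3/2$, which is exactly the claim.

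I expect no genuine obstacle here, since the corollary is essentially the conjunction of two already-established inequalities specialized to the tree case; the only thing requiring a word of justification is that an infinite tree satisfies $\diam V(G_1)\ge 3$ and $\d(G_1)=0$. Both are immediate: the latter is stated in the introduction (metric trees are exactly the spaces with $\delta=0$), and the former holds because an infinite connected tree cannot have bounded diameter. Thus the proof is a one-line citation of Theorems~\ref{t:Cota 5/4 G1} and~\ref{t:CotaSup}, and the main care is simply to check that both hypotheses (non-triviality of $G_1$ and of $G_2$, and the diameter condition) are met.
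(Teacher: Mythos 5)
Your overall strategy is exactly the paper's: the corollary is presented there as an immediate consequence of Theorems \ref{t:Cota 5/4 G1} and \ref{t:CotaSup}, i.e.\ the lower bound $3/2$ from the diameter condition on $G_1$ together with the upper bound $\d(G_1)+3/2=3/2$ because a tree is $0$-hyperbolic. The upper-bound half of your argument is complete and correct.

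The step that does not hold as written is the claim that ``an infinite connected tree cannot have bounded diameter,'' which you use to conclude $\diam V(G_1)\ge 3$. Without a local finiteness hypothesis this is false: the infinite star $K_{1,\infty}$ (one center adjacent to infinitely many leaves) is an infinite tree with $\diam V(K_{1,\infty})=2$. For such a $G_1$, Theorem \ref{t:Cota 5/4 G1} only yields $\d(G_1\circ G_2)\ge 5/4$, and indeed Theorem \ref{t:tree_graph} gives $\d(K_{1,\infty}\circ P_2)=5/4$, not $3/2$, since $P_2\notin\mathcal{F}$. So either one restricts to locally finite trees --- in which case K\"onig's lemma produces an arbitrarily long geodesic path, hence $\diam V(G_1)\ge 3$, and your argument goes through verbatim --- or the hypothesis $\diam V(G_1)\ge 3$ must be assumed outright. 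To be fair, the paper's own one-line derivation makes the same unexamined leap; but since you supplied the justification explicitly, that justification is where the gap appears. With ``locally finite'' added (or with $\diam V(G_1)\ge 3$ in place of ``infinite''), your proof is correct and coincides with the paper's.
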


\begin{theorem}\label{t:Caracterizacion 1}
Let  $G_1$ be any non-trivial graph and $G_2$ any  graph. The lexicographic product $G_1 \circ G_2$ is hyperbolic if and only if $G_1$ is hyperbolic.
\end{theorem}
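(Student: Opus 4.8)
The plan is to obtain the characterization directly from the sharp two-sided estimate already established in Theorem \ref{t:InfSup}, namely $\d(G_1)\le \d(G_1\circ G_2)\le \d(G_1)+3/2$, which holds whenever $G_1$ is a non-trivial graph. Since being hyperbolic is precisely the condition $\d(\cdot)<\infty$, both inequalities together pin down the finiteness of $\d(G_1\circ G_2)$ in terms of the finiteness of $\d(G_1)$, with no further work on the geometry of the product needed.

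First I would argue the forward implication by contraposition: suppose $G_1$ is not hyperbolic, so $\d(G_1)=\infty$. Then the lower bound $\d(G_1)\le \d(G_1\circ G_2)$ from Theorem \ref{t:sublex} (equivalently the left inequality of Theorem \ref{t:InfSup}) forces $\d(G_1\circ G_2)=\infty$, so $G_1\circ G_2$ is not hyperbolic. This gives: if $G_1\circ G_2$ is hyperbolic then $G_1$ is hyperbolic.

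Conversely, suppose $G_1$ is hyperbolic, so $\d(G_1)<\infty$. The upper bound $\d(G_1\circ G_2)\le \d(G_1)+3/2$ from Theorem \ref{t:CotaSup} is finite, hence $\d(G_1\circ G_2)<\infty$ and $G_1\circ G_2$ is hyperbolic. Combining the two directions yields the equivalence: $G_1\circ G_2$ is hyperbolic if and only if $G_1$ is hyperbolic.

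There is essentially no obstacle here, since the substantive work has already been done in establishing the bounds of Theorem \ref{t:InfSup}; the proof is a one-line application of those inequalities to the definition of hyperbolicity as $\d<\infty$. The only point requiring mild care is that these bounds are stated under the hypothesis that $G_1$ is non-trivial, which is exactly the hypothesis of the theorem, so both inequalities are available without qualification.
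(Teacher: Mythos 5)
Your argument is correct and is exactly the paper's intended proof: the theorem is presented as an immediate consequence of the two-sided bound $\d(G_1)\le\d(G_1\circ G_2)\le\d(G_1)+3/2$ of Theorem \ref{t:InfSup} (i.e., of Theorems \ref{t:sublex} and \ref{t:CotaSup}), combined with the characterization of hyperbolicity as finiteness of $\d$. Nothing further is needed.
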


\begin{remark}\label{r:d_Trivial}
For any graph $G$ and the trivial graph $E_1$, the lexicographic product graph $E_1\circ G$ is hyperbolic if and only if $G$ is hyperbolic, since $\d(E_1\circ G)=\d(G)$.
This trivial result completes the characterization of hyperbolic lexicographic products.
\end{remark}

The following results allow to characterize the graphs for which the bound in Theorem \ref{t:CotaSup} is attained.

\begin{theorem}\label{th:equal+3/2}
Let $G_1$ be any hyperbolic  graph and let $G_2$ be any graph. If $\d( G_1\circ G_2)=\d(G_1)+3/2$, then $G_1$ is a tree, $G_2$ is a non-trivial graph and $\d(G_1\circ G_2)=3/2$.
\end{theorem}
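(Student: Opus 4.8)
The statement packages three conclusions, but only one has content: that $G_1$ is a tree. Indeed, if $G_2$ were trivial then $G_1\circ G_2\simeq G_1$ by Remark \ref{r:remark2}, forcing $\d(G_1\circ G_2)=\d(G_1)\ne \d(G_1)+3/2$, so $G_2$ must be non-trivial; and once $G_1$ is shown to be a tree we have $\d(G_1)=0$, whence $\d(G_1\circ G_2)=\d(G_1)+3/2=3/2$. The plan is therefore to prove, by contradiction, that $G_1$ must be a tree: assume $G_1$ is hyperbolic but not a tree, so that $\d(G_1)>0$ and, by Theorem \ref{t:multk/4}, $\d(G_1)\ge 1/4$.

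Under this assumption I would invoke Remark \ref{r:midpoint} to produce the extremal witness: a geodesic triangle $T=\{x,y,z\}$ with $x,y,z\in J(G_1\circ G_2)$, a midpoint $p\in[xy]$ with $d_{G_1\circ G_2}(p,[xz]\cup[zy])=\d(G_1)+3/2$, a closest vertex $V_p=(v,w)$ to $p$, the projection $\pi\colon G_1\circ G_2\to G_1\circ\{w\}$, and a vertex $\a\in[\pi(x)\pi(z)]\cup[\pi(z)\pi(y)]$ with $d_{G_1\circ\{w\}}(V_p,\a)=\d(G_1)$. The first payoff is an integrality remark: $V_p$ and $\a$ are vertices and $G_1\circ\{w\}$ is isometric to $G_1$, so $\d(G_1)=d_{G_1}(\cdot,\cdot)$ is an integer; together with $\d(G_1)>0$ this upgrades the bound to $\d(G_1)\ge 1$. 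Writing $\a=(v_\a',w)$, we get $d_{G_1}(v,v_\a')=\d(G_1)\ge 1$, so $v\ne v_\a'$; and since $d_{G_1\circ G_2}(p,V_p)=1/2$, every point of $[xz]\cup[zy]$ lies at distance at least $\d(G_1)+1$ from $V_p$.

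The heart of the argument is to contradict this last inequality by exhibiting a point of $[xz]\cup[zy]$ too close to $V_p$. The clean case is $\a\in\pi([xz]\cup[zy])$: since $\pi$ preserves the $G_1$-coordinate and $\a$ is a vertex, some vertex $Q$ of $[xz]\cup[zy]$ lies in the fiber $\{v_\a'\}\circ G_2$, and because $v\ne v_\a'$ Lemma \ref{l:dist} gives $d_{G_1\circ G_2}(V_p,Q)=d_{G_1}(v,v_\a')=\d(G_1)$, whence $d_{G_1\circ G_2}(p,[xz]\cup[zy])\le 1/2+\d(G_1)$, a contradiction. (Note this refines the estimate in Theorem \ref{t:CotaSup}, where $\a$ was lifted only through a neighbour.) Consequently equality forces $\a\notin\pi([xz]\cup[zy])$; hence the projected side carrying $\a$, say $\pi([xz])$, is not a geodesic, so $L([xz])\le 3$ by Lemma \ref{l:proy} and $d_{G_1}(\pi(x),\pi(z))<3$ by Corollary \ref{c:proy3}. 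Thus $\a$ sits on a geodesic side $[\pi(x)\pi(z)]$ of length less than $3$.

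The remaining case is where I expect the real difficulty. Applying the lifting argument of the previous paragraph to the endpoints $\pi(x),\pi(z)$ (which do lie on $\pi([xz]\cup[zy])$) forces both of them to sit at distance at least $\d(G_1)+1$ from $v$, while $\a$, an interior vertex of the short side $[\pi(x)\pi(z)]$, sits at distance exactly $\d(G_1)$; this pins down a rigid local picture in which $\a$ is essentially a common neighbour of the two corners and $\d(G_1)$ is realized as a sharp dip of the distance-to-$v$ function along a side of length $2$. The plan is to show that this picture is impossible once $\d(G_1)\ge 1$: one argues, using Corollary \ref{c:proy3} (the projected sides are short) together with the connectedness of $\pi([xz]\cup[zy])$ and the minimality of $\a$, that the path $\pi([xz]\cup[zy])$ must after all meet the ball $\{u\in G_1:d_{G_1}(v,u)\le\d(G_1)\}$—which reintroduces the shortcut of the clean case—or else that the geodesic triangle $\{v,\pi(x),\pi(z)\}$ already forces thinness strictly larger than $\d(G_1)$. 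Controlling how the short, non-geodesic projected sides are positioned relative to $v$, that is, the \emph{localization} of the hyperbolicity of $G_1$ near the short side, is the main obstacle; once a contradiction is secured in this case as well we conclude $\d(G_1)=0$, so $G_1$ is a tree and $\d(G_1\circ G_2)=3/2$, completing the proof.
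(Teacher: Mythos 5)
Your reduction to ``$G_1$ is a tree,'' the integrality upgrade $\d(G_1)\ge 1$, and your ``clean case'' are all sound; in fact the direct lift of $\a$ through the fiber $\{v_\a'\}\circ G_2$ (using Lemma \ref{l:dist} to get $d_{G_1\circ G_2}(V_p,Q)=d_{G_1}(v,v_\a')=\d(G_1)$, hence $d_{G_1\circ G_2}(p,[xz]\cup[zy])\le \d(G_1)+1/2$) is a clean refinement of the lifting step in Theorem \ref{t:CotaSup}. But the proof is not complete: in the case $\a\notin\pi([xz]\cup[zy])$ you only describe a ``plan'' and explicitly flag the ``main obstacle'' without resolving it. That case is exactly where the content of the theorem lives, and the argument you sketch (forcing $\pi([xz]\cup[zy])$ to meet the ball of radius $\d(G_1)$ around $v$, or deriving a thinness violation from the triangle $\{v,\pi(x),\pi(z)\}$) is not carried out; as written there is no contradiction obtained there, so the theorem is not proved. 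A secondary issue: when $\pi([xz])$ is a geodesic it need not coincide with the chosen geodesic $[\pi(x)\pi(z)]$ carrying $\a$, so the inference ``$\a\notin\pi([xz]\cup[zy])$ implies the projected side is not a geodesic'' needs the same case discipline as in the proof of Theorem \ref{t:CotaSup} (choose the triangle's sides to be $\pi([xz]),\pi([zy])$ whenever these are geodesics).

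For comparison, the paper avoids your case split altogether and instead exploits the edge $[V_p,V_p']$ containing the extremal midpoint $p$. First it shows $\pi(V_p)\ne\pi(V_p')$ (otherwise two vertices $a,b\in[xy]$ with $d(a,b)=3$ would project to points at distance $2$, contradicting Lemma \ref{l:dist}). Then it examines $d_{G_1\circ\{w\}}(\pi(p),[\pi(x)\pi(z)]\cup[\pi(z)\pi(y)])$: if this still equaled $\d(G_1)$, the midpoint $\xi$ of $[\pi(p)\,V_p]$ would lie on the geodesic $\pi([xy])$ at distance $\d(G_1)+1/4$ from the other two sides, contradicting $\d(G_1)$-thinness in $G_1$; hence the distance must drop to $\d(G_1)-1/2$ at $\pi(p)$ and to $\d(G_1)-1$ at $\pi(V_p')$. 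Rerunning the lifting argument of Theorem \ref{t:CotaSup} from $V_p'$ instead of $V_p$ then gives $d_{G_1\circ G_2}(p,[xz]\cup[zy])\le \d(G_1)+1/2$, the desired contradiction, with no need to control where $\a$ sits relative to $\pi([xz]\cup[zy])$. If you want to salvage your route, you would need an argument of comparable strength for your remaining case; the paper's ``slide to $V_p'$'' trick is the missing idea.
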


\begin{proof}
Seeking for a contradiction assume that $G_1$ is not a tree (\emph{i.e.}, $\d(G_1)>0$).
By hypothesis $G_1\circ G_2$ is hyperbolic, thus, Theorem \ref{t:TrianVMp} and Remark \ref{r:midpoint} give that there is a geodesic triangle $T=\{x,y,z\}$ in $G_1 \circ G_2$ that is a cycle with $x,y,z \in J(G_1 \circ G_2)$ and a midpoint $p\in[xy]$ such that $d_{G_1\circ G_2}(p,[xz]\cup[zy])=\d(G_1)+3/2$. Let $V_p:=(v,w)$ be a closest vertex to $p$ in $[xy]\cap V(G_1\circ G_2)$ as in the proof of Theorem \ref{t:CotaSup}, \emph{i.e.}, $d_{G_1\circ \{w\}}(V_p, [\pi(x)\pi(z)]\cup[\pi(z)\pi(y)])=\d(G_1)$ with $\pi$ the canonical projection on $G_1\circ\{w\}$; besides, this equality is attained in a vertex $\a\in [\pi(x)\pi(z)]\cup[\pi(z)\pi(y)]$.
Note that $\d(G_1)$ is an integer number since it is the distance between two vertices.  Since $\d(G_1)>0$, we have $\d(G_1)\ge 1$.
Let $V_p'$ be the vertex in $T\cap V(G_1\circ G_2)$ such that $[V_p,V_p']$ is the edge in $G_1\circ G_2$ with $p\in[V_p,V_p']$.
Since $d_{G_1\circ G_2}(p,\{x,y\}) \ge d_{G_1\circ G_2}(p,[xz]\cup[zy])= \d(G_1)+3/2$, there exist $a,b\in [xy]\cap V(G_1\circ G_2)$ with $d_{G_1\circ G_2}(a,p)= d_{G_1\circ G_2}(b,p)= 3/2$ and $d_{G_1\circ G_2}(a,b)=3$. If $\pi(V_p)=\pi(V_p')$, then $d_{G_1\circ \{w\}}(\pi(a),\pi(b))=2$. This contradicts Lemma \ref{l:dist}, and so, we have $\pi(V_p)\neq \pi(V_p')$ and $\pi(V_p)\neq \pi(p)\neq \pi(V_p')$.
If $d_{G_1\circ\{w\}}(\pi(p), [\pi(x)\pi(z)]\cup[\pi(z)\pi(y)]) = d_{G_1\circ \{w\}}(V_p, [\pi(x)\pi(z)]\cup[\pi(z)\pi(y)]) = \d(G_1)\ge 1$, then since $\pi(V_p)\neq\pi(p)$ we obtain that $d_{G_1\circ\{w\}}(\xi, [\pi(x)\pi(z)]\cup[\pi(z)\pi(y)]) = \d(G_1)+1/4$ where $\xi$ is the midpoint of $[\pi(p) V_p]$. But this is a contradiction since $d_{G_1\circ\{w\}}(\xi, [\pi(x)\pi(z)]\cup[\pi(z)\pi(y)]) \le \d(G_1)$.
Then we have $d_{G_1\circ\{w\}}(\pi(p), [\pi(x)\pi(z)]\cup[\pi(z)\pi(y)]) < d_{G_1\circ \{w\}}(V_p, [\pi(x)\pi(z)]\cup[\pi(z)\pi(y)])=\d(G_1)$; hence,  $d_{G_1\circ\{w\}}(\pi(p), [\pi(x)\pi(z)]\cup[\pi(z)\pi(y)]) = \d(G_1) - 1/2$ and $d_{G_1\circ\{w\}}(\pi(V_p'), [\pi(x)\pi(z)]\cup[\pi(z)\pi(y)]) = \d(G_1) - 1$.
We can repeat the same argument in the proof of Theorem \ref{t:CotaSup} for $V_p'$ instead of $V_p$, and we obtain $d_{G_1\circ G_2}(p,[xz]\cup[zy]) \le \d(G_1)+1/2$.
This is the contradiction we were looking for and $G_1$ is a tree.

Hence, $\d(G_1\circ G_2)=3/2$. If $G_2$ is a trivial graph, then $3/2=\d(G_1\circ G_2)=\d(G_1)=0$, which is a contradiction. Therefore, $G_2$ is a non-trivial graph.
\end{proof}

Theorem \ref{th:hyp3/2_diam2} below is a converse of Theorem \ref{th:equal+3/2}; furthermore, it provides the exact value of the hyperbolicity constant of the lexicographic product of many trees and graphs. We need some lemmas.

\begin{lemma}\label{l:Tcopia}
Let $G_1$ be any tree with $1\le \diam G_1\le 2$ and $G_2$ any graph. Then $\d(G_1\circ G_2)=3/2$ if and only if there is a geodesic triangle $T = \{x, y, z\}$ in $G_1\circ G_2$ that is a cycle contained in $\{v_0\} \circ G_2$ for some $v_0\in V(G_1)$ with $x, y, z \in J(\{v_0\} \circ G_2)$ and a vertex $p \in [xy]$ such that $d _{G_1\circ G_2} (p, [xz] \cup [zy])= d _{G_1\circ G_2} (p, x)= d _{G_1\circ G_2} (p, y)= 3/2$.
\end{lemma}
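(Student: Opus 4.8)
The backward implication is immediate: if such a triangle exists then $\d(G_1\circ G_2)\ge \d(T)\ge d_{G_1\circ G_2}(p,[xz]\cup[zy])=3/2$, while Theorem \ref{t:CotaSup} gives $\d(G_1\circ G_2)\le \d(G_1)+3/2=3/2$ since $G_1$ is a tree; hence equality. The content is in the forward implication, so assume $\d(G_1\circ G_2)=3/2$. Since $\diam V(G_1)\le 2$, Lemma \ref{l:dist} yields $\diam V(G_1\circ G_2)\le 2$ and thus $\diam (G_1\circ G_2)\le 3$; combined with Theorem \ref{t:diameter1} and $\d=3/2$ this forces $\diam (G_1\circ G_2)=3$. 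By Theorem \ref{t:TrianVMp} I would fix a geodesic triangle $T=\{x,y,z\}$ that is a cycle with $x,y,z\in J(G_1\circ G_2)$ and $\d(T)=3/2$, together with a point $p$ on some side, say $p\in[xy]$, with $d_{G_1\circ G_2}(p,[xz]\cup[zy])=3/2$. As $x\in[xz]$ and $y\in[zy]$ we get $d(p,x),d(p,y)\ge 3/2$, so $d(x,y)\ge 3$; since $\diam(G_1\circ G_2)=3$ this gives $d(x,y)=3$, $d(p,x)=d(p,y)=3/2$, and $p$ the midpoint of $[xy]$. Reading off the length-$3$ geodesic $[xy]$ then shows $x,y$ are midpoints of edges, $p$ is the middle vertex of $[xy]$, and, writing $x$ (resp. $y$) as the midpoint of $[X_1,X_2]$ (resp. $[Y_1,Y_2]$), a computation with Lemma \ref{l:dist} gives that all four distances $d(X_i,Y_j)$ equal $2$.

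The next step is to localize the configuration to a single copy. Set $v_0:=\pi(p)$ and write $p=(v_0,\mu)$. The geometric engine is that, $G_1$ being a star (a tree of diameter $\le 2$), its copies are themselves arranged in a star: the only copies adjacent to $\{v_0\}\circ G_2$ are the neighbouring ones, and by Lemma \ref{l:dist} \emph{every} vertex of a neighbouring copy is at distance exactly $1$ from $p$. Hence any path leaving $\{v_0\}\circ G_2$ must cross an edge whose far endpoint lies at distance $1$ from $p$, and therefore contains points at distance $<3/2$ from $p$. Since every point of $[xz]$ and of $[zy]$ is at distance $\ge 3/2$ from $p$, neither side can cross out of $\{v_0\}\circ G_2$ once inside it. Tracking the directions in which these sides leave $x$ and $y$ (each must leave toward the endpoint of $x$'s, resp. $y$'s, edge that is far from $p$, since the near endpoint is at distance $1<3/2$) I would conclude $z\in\{v_0\}\circ G_2$ and $X_2,Y_2\in\{v_0\}\circ G_2$, with $[xz],[zy]$ lying in $\{v_0\}\circ G_2$ except possibly for an initial half-edge at $x$ and at $y$.

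The remaining obstacle, which I expect to be the delicate point, is that a priori $x$ (or $y$) may be the midpoint of a \emph{cross edge} $[X_1,X_2]$ with $X_1$ in a neighbouring copy and $X_2=(v_0,\xi_2)\in\{v_0\}\circ G_2$; this is fully consistent with $d(p,x)=3/2$ realised through $X_1$, so the given $T$ need not be contained in one copy and must be replaced. To repair this I would use that $G_2$ is connected and non-trivial (Theorem \ref{th:equal+3/2}) and that $d_{G_2}(\mu,\xi_2)\ge 2$: choosing a $G_2$-geodesic $\mu=w_0\sim w_1\sim\cdots\sim w_k=\xi_2$ with $k\ge 2$, the edge $[(v_0,w_1),(v_0,w_2)]$ lies \emph{inside} $\{v_0\}\circ G_2$ and its midpoint $x''$ satisfies $d_{G_1\circ G_2}(p,x'')=3/2$, the value being attained through the near endpoint $(v_0,w_1)$ at distance $1$ from $p$. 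Relocating $x$ to $x''$ and, symmetrically, $y$ to $y''$, and keeping the already-interior vertex $z$, produces a geodesic triangle contained in $\{v_0\}\circ G_2$ with vertices in $J(\{v_0\}\circ G_2)$.

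To finish I would verify, using Lemma \ref{l:geodesicas} (short geodesics remain inside a copy) and Lemma \ref{l:dist}, that the relocated triangle still realises $d_{G_1\circ G_2}\big(p,[x''z]\cup[zy'']\big)=d(p,x'')=d(p,y'')=3/2$, so that it is exactly the triangle required by the statement. The main technical hurdle is precisely this last check: confirming that the extremal thinness value $3/2$ is preserved when $x$ and $y$ are pushed into the copy, the rest being bookkeeping with Lemma \ref{l:dist} and the star structure of $G_1$. (The degenerate sub-case in which $z$ coincides with $x$ or $y$ simply yields the bigon version of the same triangle and is handled identically.)
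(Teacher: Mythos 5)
Your backward implication and the opening of the forward one (forcing $\diam(G_1\circ G_2)=3$, hence $d(x,y)=3$ with $p$ a vertex at the middle of $[xy]$ and $x,y$ midpoints of edges) are correct and coincide with the paper's argument. The divergence is in the localization step, and that is where the gap lies. The paper shows directly, by deriving contradictions, that the extremal triangle supplied by Theorem \ref{t:TrianVMp} is \emph{already} contained in a single copy $\{v_0\}\circ G_2$; in particular the ``delicate point'' you isolate --- that $x$ or $y$ might be the midpoint of a cross edge --- simply cannot occur. Indeed, since $1\le\diam G_1\le 2$, the tree $G_1$ is a star with some center $c$, so any cross edge has the form $[(c,a),(u,a')]$ with $u\ne c$; for the endpoint $Y_1$ of $y$'s edge lying on $[xy]$, one of $(c,a),(u,a')$ is at distance $1$ from $Y_1$ by Lemma \ref{l:dist} (use $(c,a)$ if $\pi(Y_1)\ne c$ and $(u,a')$ if $\pi(Y_1)=c$), whence $d(x,y)\le \tfrac12+1+\tfrac12=2$, contradicting $d(x,y)=3$. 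Checking consistency with the single constraint $d(p,x)=3/2$ is not enough; the constraint $d(x,y)=3$ already kills the configuration.

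Because you do not see this, you build a repair mechanism, and that repair is exactly the unproven part of your proposal. You explicitly defer the verification that the relocated configuration is still a geodesic triangle that is a cycle with $d_{G_1\circ G_2}(p,[x''z]\cup[zy''])=3/2$: nothing in the proposal controls the new sides $[x''z]$ and $[zy'']$ (which could a priori pass through one of the many vertices at distance $1$ from $p$, e.g.\ any $(v_0,w)$ with $d_{G_2}(\mu,w)=1$), nor shows that the concatenation through $p$ is a geodesic $[x''y'']$, nor that the union is a cycle with $x'',y'',z\in J(\{v_0\}\circ G_2)$. Relatedly, your claim that $z$ and $X_2,Y_2$ land in $\{v_0\}\circ G_2$ is not forced by the ``far endpoint at distance $1$'' observation alone when $v_0$ is a leaf of the star, since $X_2$ could a priori sit in a copy at $G_1$-distance $2$ from $v_0$. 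Since producing the localized triangle is the entire content of the forward implication, the proof is incomplete as it stands; the fix is to replace the relocation by the impossibility argument above (which is essentially what the paper does).
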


\begin{proof}
Assume first that $\d(G_1\circ G_2)=3/2$.
By Theorem \ref{t:TrianVMp} there exists a geodesic triangle $T=\{x,y,z\}$ in $G_1 \circ G_2$ that is a cycle with $x,y,z \in J(G_1 \circ G_2)$ and a point $p\in [xy]$ such that $\d(T)=d_{G_1\circ G_2}(p,[yz]\cup [zx])=3/2$. Thus, $d _{G_1\circ G_2} (p, \{x,y\})\ge d _{G_1\circ G_2} (p, [xz] \cup [zy])= 3/2$ and $L([xy])\ge 3$.

Assume that $\diam G_1= 2$ (the case $\diam G_1=1$ is similar and simpler).
We show now that $\diam G_1\circ G_2=3$.
Note that $\diam G_1\circ G_2 \ge L([xy])\ge 3$.
Let $A,B\in V(G_1\circ G_2)$.
If $\pi(A)=\pi(B)$, then by Lemma \ref {l:dist} we have $d_{G_1\circ G_2}(A,B)\le 2$.
If $\pi(A)\neq \pi(B)$, then by Lemma \ref {l:dist} we have $d_{G_1\circ G_2}(A,B)\le 2$ since that $\diam G_1= 2$.
Therefore, $\diam V(G_1\circ G_2)=2$ and $\diam G_1\circ G_2\le 3$.
Consequently,  $\diam G_1\circ G_2=3$, $L([xy])=3$ and $d _{G_1\circ G_2} (p, x)= d _{G_1\circ G_2} (p, y)= 3/2$.
Notice that $x,y$ are midpoints of $G_1\circ G_2$ and $p$ a vertex of $G_1\circ G_2$.

Assume now that $x\in \{v_0\}\circ G_2$ for some $v_0\in V(G_1)$ and $y\notin \{v_0\}\circ G_2$, where $x\in [A_1,A_2]$ and $y\in [B_1,B_2]$ with $A_1,B_1\in [xy]$; then $d_{G_1\circ G_2}(A_1,B_1)=2$ since that $L([xy])=3$.
Note that $A_1\in \{v_0\}\times V(G_2)$ and $B_1\in \{w_0\}\times V(G_2)$ with $d_{G_1}(v_0,w_0)=2$.
We have that $[xy]\cap([yz]\cup [zx])=\{x,y\}$ since $T$ is a cycle.
Hence, $A_2,B_2\in V([yz]\cup [zx])$ and $d_{G_1\circ G_2}(p,[yz]\cup [zx])=d_{G_1\circ G_2}(p,\{A_2,B_2\})=1$ since $p$ is a vertex, and this is a contradiction.
If $y\in \{v_0\}\circ G_2$ for some $v_0\in V(G_1)$ and $x\notin \{v_0\}\circ G_2$, then the same argument gives a contradiction.
If $x,y \notin \cup_{v_0\in V(G_1)} \{v_0\}\circ G_2$, then one can check that $d_{G_1\circ G_2}(x,y)\le 2$, which is a contradiction.
Hence, we conclude that $x,y\in \{v_0\}\circ G_2$ for some $v_0\in V(G_1)$.
We also have $p\in \{v_0\}\circ G_2$ and we conclude that $[xy]$ is contained in $\{v_0\}\circ G_2$.
If $[yz]\cup [zx]$ is not contained in $\{v_0\}\circ G_2$, then there is a vertex $W\in [yz]\cup [zx]$ such that $W\in \{w_0\}\circ G_2$ and $d_{G_1}(v_0,w_0)=1$.
Hence, $d_{G_1\circ G_2}(p,W)=1$, which is a contradiction.
Then $T$ is contained in $\{v_0\}\circ G_2$.

It is easy to check that if there exists such a geodesic triangle $T$, then $\d(G_1\circ G_2)\ge \d(T)\ge3/2$. Theorem \ref{t:CotaSup} allows to conclude $\d(G_1\circ G_2)=3/2$.
\end{proof}

Now we define some families of graphs which will be useful.
Denote by $C_n$ the cycle graph with $n\ge3$ vertices and by $V(C_n):=\{v_1^{(n)},\ldots,v_n^{(n)}\}$ the set of their vertices such that $[v_n^{(n)},v_1^{(n)}]\in E(C_n)$ and $[v_i^{(n)},v_{i+1}^{(n)}]\in E(C_n)$ for $1\le i\le n-1$.
Let  $\mathcal{C}_6^{(1)}$ be the set of graphs obtained from $C_6$ by adding a (proper or not) subset of the set of edges $\{[v_2^{(6)},v_6^{(6)}]$, $[v_4^{(6)},v_6^{(6)}]\}$.
Let us define the set of graphs
$$\mathcal{F}_6:=\{\text{graphs containing, as induced subgraph, an isomorphic graph to some element of } \mathcal{C}_6^{(1)}\}.$$
Let  $\mathcal{C}_7^{(1)}$ be the set of graphs obtained from $C_7$ by adding a (proper or not) subset of the set of edges $\{[v_2^{(7)},v_6^{(7)}]$, $[v_2^{(7)},v_7^{(7)}]$, $[v_4^{(7)},v_6^{(7)}]$, $[v_4^{(7)},v_7^{(7)}]\}$.
Define
$$\mathcal{F}_7:=\{\text{graphs containing, as induced subgraph, an isomorphic graph to some element of } \mathcal{C}_7^{(1)}\}.$$
Let  $\mathcal{C}_8^{(1)}$ be the set of graphs obtained from $C_8$ by adding a (proper or not) subset of the set $\{[v_2^{(8)},v_6^{(8)}]$, $[v_2^{(8)},v_8^{(8)}]$, $[v_4^{(8)},v_6^{(8)}]$, $[v_4^{(8)},v_8^{(8)}]\}$.
Also, let $\mathcal{C}_8^{(2)}$ be the set of graphs obtained from $C_8$ by adding a (proper or not) subset of $\{[v_2^{(8)},v_8^{(8)}]$, $[v_4^{(8)},v_6^{(8)}]$, $[v_4^{(8)},v_7^{(8)}]$, $[v_4^{(8)},v_8^{(8)}]\}$.
Define
$$\mathcal{F}_8:=\{\text{graphs containing, as induced subgraph, an isomorphic graph to some element of } \mathcal{C}_8^{(1)}\cup \mathcal{C}_8^{(2)}\}.$$
Let  $\mathcal{C}_9^{(1)}$ be the set of graphs obtained from $C_9$ by adding a (proper or not) subset of the set of edges $\{[v_2^{(9)},v_6^{(9)}]$, $[v_2^{(9)},v_9^{(9)}]$, $[v_4^{(9)},v_6^{(9)}]$, $[v_4^{(9)},v_9^{(9)}]\}$.
Define
$$\mathcal{F}_9:=\{\text{graphs containing, as induced subgraph, an isomorphic graph to some element of } \mathcal{C}_9^{(1)}\}.$$
Finally, we define the set $\mathcal{F}$ by
$$\mathcal{F}:=\mathcal{F}_6\cup\mathcal{F}_7\cup\mathcal{F}_8\cup\mathcal{F}_9.$$
Note that $\mathcal{F}_6$, $\mathcal{F}_7$, $\mathcal{F}_8$ and $\mathcal{F}_9$ are not disjoint sets of graphs.

For any non-empty set $S\subset V(G)$, the induced subgraph of $S$ will be denoted by $\langle S\rangle$.

\begin{lemma}\label{l:hyp3/2_6-9}
Let $G$ be any graph. Then $G\in \mathcal{F}$ if and only if there is a geodesic triangle $T=\{x,y,z\}$ in $G$ that is a cycle with $x,y,z\in J(G)$, $L([xy]),L([yz]),L([zx])\le 3$ and $\d(T)=3/2=d_{G}(p,[yz]\cup[zx])$ for some $p\in[xy]$.
\end{lemma}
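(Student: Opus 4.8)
The plan is to prove both implications by first extracting a rigid combinatorial skeleton from the right-hand side and then running a finite case analysis. First I would read off the forced structure. Since $x\in[zx]$ and $y\in[yz]$, the equality $d_G(p,[yz]\cup[zx])=3/2$ forces $d_G(p,x)\ge 3/2$ and $d_G(p,y)\ge 3/2$; as $p\in[xy]$ and $L([xy])\le 3$, this gives $L([xy])=3$ and $d_G(p,x)=d_G(p,y)=3/2$, so $p$ is the midpoint of $[xy]$. Because $[yz]$ and $[zx]$ are geodesics we have $L([yz])=d_G(y,z)$ and $L([zx])=d_G(z,x)$, and the triangle inequality $3=d_G(x,y)\le d_G(x,z)+d_G(z,y)$ together with $L([yz]),L([zx])\le 3$ yields $6\le L(T)\le 9$. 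Since $T$ is a simple closed curve in which every edge of $G$ that it meets is traversed fully (an edge meeting a midpoint corner is split between the two incident sides but still covered), the underlying closed walk is a cycle of $G$ of integer length $n:=L(T)\in\{6,7,8,9\}$; its vertices span an induced subgraph $\langle V(C_n)\rangle$ equal to $C_n$ plus some set of chords. An auxiliary observation I would record is that $x,y$ are of the same type and $p$ is a vertex exactly when $x,y$ are midpoints; this dichotomy, refined by the type of $z$, produces the finitely many placements of the three corners on $C_n$.

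For the implication from the triangle to $G\in\mathcal F$, I would fix, for each $n$ and each admissible placement, the standard labelling $v_1^{(n)},\dots,v_n^{(n)}$ so that $[xy]$ occupies the arc through $p$ and the corners sit at prescribed positions. The only remaining freedom is the set of chords of $C_n$ present in $G$. Call a chord \emph{forbidden} if it shortens one of the three sides below its length (breaking the geodesic property) or yields a path from $p$ to $[yz]\cup[zx]$ of length $<3/2$, and \emph{harmless} otherwise. Running through all non-edges of $C_n$ under each placement, I expect the harmless chords to be exactly those listed in the definitions of $\mathcal C_6^{(1)}$, $\mathcal C_7^{(1)}$, $\mathcal C_8^{(1)}\cup\mathcal C_8^{(2)}$ and $\mathcal C_9^{(1)}$, with $n=8$ splitting into two inequivalent placements and hence two families. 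Since the given triangle already certifies that no forbidden chord occurs in $G$, the subgraph $\langle V(C_n)\rangle$ is isomorphic to an element of the corresponding $\mathcal C_n^{(\cdot)}$, whence $G\in\mathcal F$.

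For the converse, given $S\subseteq V(G)$ with $\langle S\rangle$ isomorphic to an element of some $\mathcal C_n^{(\cdot)}$, I would read off $x,y,z$ and $p$ from the standard labelling and take the three cycle-arcs as candidate sides; then $d_G(p,[yz]\cup[zx])=3/2$ gives $\d(T)\ge 3/2$, and since all three sides have length at most $3$ one checks directly that no point of $T$ is farther than $3/2$ from the other two sides, so $\d(T)=3/2$. The substantive point is that vertices of $G$ outside $S$ can only create detours of length at least $2$ between cycle vertices: a genuine length-$1$ chord among vertices of $S$ would already lie in the induced subgraph $\langle S\rangle$ and hence be one of the allowed chords, so an outside vertex acts at best as a ``length-$2$ chord'' between a cycle-distance-$3$ pair. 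Such detours never drop $d_G(p,[yz]\cup[zx])$ below $2$, hence never below $3/2$, and the allowed chords do not drop it below $3/2$ either. The only place where an outside vertex can do real harm is by shortcutting a length-$3$ side joining two \emph{vertices} at cycle-distance $3$; there I would instead place those corners at midpoints, using that a midpoint-to-midpoint distance $3$ survives length-$2$ detours (the controlling vertex distances are already $2$). This is precisely the role of the midpoint configurations encoded in the families.

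The main obstacle is the backward chord census: for each $n$ and each corner placement one must verify case by case that exactly the listed chords are harmless and every other non-edge is forbidden, and in particular isolate the two genuinely different placements for $n=8$. The secondary difficulty, in the forward direction, is the robustness against vertices outside $S$; the clean way to dispatch it is the length-$\ge 2$ observation above, which confines all possible interference to the shortcutting of vertex-based length-$3$ sides and is then neutralised by switching to the midpoint placements.
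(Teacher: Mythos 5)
Your proposal follows essentially the same route as the paper's proof: force $L([xy])=3$ with $p$ the midpoint, deduce $6\le L(T)\le 9$, and then classify the admissible chords of the underlying cycle $C_n$ for $n=6,7,8,9$ to land in the families $\mathcal{C}_n^{(\cdot)}$, with the converse obtained by reading the triangle off an induced copy. The only differences are ones of emphasis: the explicit chord census that you defer with ``I expect'' is precisely the body of the paper's argument, while your discussion of robustness against vertices outside $S$ (detours of length $\ge 2$, midpoint placement of corners on length-$3$ sides) is more detailed than the paper's one-sentence treatment of the converse.
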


\begin{proof}
Assume first that there is a geodesic triangle $T=\{x,y,z\}$ in $G$ that is a cycle with $x,y,z\in J(G)$, $L([xy]),L([yz]),L([zx])\le 3$ and $\d(T)=3/2=d_{G}(p,[yz]\cup[zx])$ for some $p\in[xy]$.
Since $d_{G}(p,\{x,y\})\ge d_{G}(p, [yz]\cup[zx]) = 3/2$, we have $L([xy])=3$ and $p$ is the midpoint of $[xy]$, thus $p\in V(G)$.
Since $L([yz])\le3$, $L([zx])\le3$ and $L([yz])+L([zx])\ge L([xy])$, we have $6\le L(T)\le9$.

Assume now that $L(T)=6$.
Denote by $\{v_1,\ldots,v_6\}$ the vertices in $T$ such that $T=\bigcup_{i=1}^{6}[v_i,v_{i+1}]$ with $v_7:=v_1$. Without loss of generality we can assume that $x\in[v_1,v_2]$, $y\in[v_4,v_5]$ and $p=v_3$. Since $d_{G}(x,y)=3$, we have that $\langle\{v_1,\ldots,v_6\}\rangle$ contains neither $[v_1,v_4]$, $[v_1,v_5]$, $[v_2,v_4]$ nor $[v_2,v_5]$; besides, since $d_{G}(p,[yz]\cup[zx])>1$ we have that $\langle\{v_1,\ldots,v_6\}\rangle$ contains neither $[v_3,v_1]$, $[v_3,v_5]$ nor $[v_3,v_6]$.
Note that $[v_2,v_6]$, $[v_4,v_6]$ may be contained in $\langle\{v_1,\ldots,v_6\}\rangle$. Therefore, $G\in \mathcal{F}_6$.

Assume that $L(T)=7$ and $G\notin \mathcal{F}_6$.
Denote by $\{v_1,\ldots,v_7\}$ the vertices in $T$ such that $T=\bigcup_{i=1}^{7}[v_i,v_{i+1}]$ with $v_8:=v_1$. Without loss of generality we can assume that $x\in[v_1,v_2]$, $y\in[v_4,v_5]$ and $p=v_3$. Since $d_{G}(x,y)=3$, we have that $\langle\{v_1,\ldots,v_7\}\rangle$ contains neither $[v_1,v_4]$, $[v_1,v_5]$, $[v_2,v_4]$ nor $[v_2,v_5]$; besides, since $d_{G}(p,[yz]\cup[zx])>1$ we have that $\langle\{v_1,\ldots,v_7\}\rangle$ contains neither $[v_3,v_1]$, $[v_3,v_5]$, $[v_3,v_6]$ nor $[v_3,v_7]$.
Since $G\notin \mathcal{F}_6$, $[v_1,v_6]$ and $[v_5,v_7]$ are not contained in $\langle\{v_1,\ldots,v_7\}\rangle$.
Note that $[v_2,v_6]$, $[v_2,v_7]$, $[v_4,v_6]$, $[v_4,v_7]$ may be contained in $\langle\{v_1,\ldots,v_7\}\rangle$. Hence, $G\in \mathcal{F}_7$.

Assume that $L(T)=8$ and $G\notin \mathcal{F}_6\cup\mathcal{F}_7$.
Denote by $\{v_1,\ldots,v_8\}$ the vertices in $T$ such that $T=\bigcup_{i=1}^{8}[v_i,v_{i+1}]$ with $v_9:=v_1$. Without loss of generality we can assume that $x\in[v_1,v_2]$, $y\in[v_4,v_5]$ and $p=v_3$. Since $d_{G}(x,y)=3$, we have that $\langle\{v_1,\ldots,v_8\}\rangle$ contains neither $[v_1,v_4]$, $[v_1,v_5]$, $[v_2,v_4]$ nor $[v_2,v_5]$; besides, since $d_{G}(p,[yz]\cup[zx])>1$ we have that $\langle\{v_1,\ldots,v_8\}\rangle$ contains neither $[v_3,v_1]$, $[v_3,v_5]$, $[v_3,v_6]$, $[v_3,v_7]$ nor $[v_3,v_8]$.
Since $G\notin \mathcal{F}_6\cup\mathcal{F}_7$, $[v_1,v_6]$, $[v_1,v_7]$, $[v_5,v_7]$, $[v_5,v_8]$ and $[v_6,v_8]$ are not contained in $\langle\{v_1,\ldots,v_8\}\rangle$.
Since $T$ is a geodesic triangle we have that $z\in\{v_{6,7},v_7,v_{7,8}\}$ with $v_{6,7}$ and $v_{7,8}$ the midpoints of $[v_6,v_7]$ and $[v_7,v_8]$, respectively.
If $z=v_7$ then $\langle\{v_1,\ldots,v_8\}\rangle$ contains neither $[v_2,v_7]$ nor $[v_4,v_7]$.
Note that $[v_2,v_6]$, $[v_2,v_8]$, $[v_4,v_6]$, $[v_4,v_8]$ may be contained in $\langle\{v_1,\ldots,v_8\}\rangle$.
If $z=v_{6,7}$ then $\langle\{v_1,\ldots,v_8\}\rangle$ contains neither $[v_2,v_6]$ nor $[v_2,v_7]$.
Note that $[v_2,v_8]$, $[v_4,v_6]$, $[v_4,v_7]$, $[v_4,v_8]$ may be contained in $\langle\{v_1,\ldots,v_8\}\rangle$.
By symmetry, we obtain an equivalent result for $z=v_{7,8}$. Therefore, $G\in \mathcal{F}_8$.

Assume that $L(T)=9$ and $G\notin \mathcal{F}_6\cup\mathcal{F}_7\cup\mathcal{F}_8$.
Denote by $\{v_1,\ldots,v_9\}$ the vertices in $T$ such that $T=\bigcup_{i=1}^{9}[v_i,v_{i+1}]$ with $v_{10}:=v_1$. Without loss of generality we can assume that $x\in[v_1,v_2]$, $y\in[v_4,v_5]$ and $p=v_3$. Since $d_{G}(x,y)=3$, we have that $\langle\{v_1,\ldots,v_9\}\rangle$ contains neither $[v_1,v_4]$, $[v_1,v_5]$, $[v_2,v_4]$ nor $[v_2,v_5]$; besides, since $d_{G}(p,[yz]\cup[zx])>1$ we have that $\langle\{v_1,\ldots,v_9\}\rangle$ contains neither $[v_3,v_1]$, $[v_3,v_5]$, $[v_3,v_6]$, $[v_3,v_7]$, $[v_3,v_8]$ nor $[v_3,v_9]$.
Since $T$ is a geodesic triangle we have that $z$ is the midpoint of $[v_7,v_8]$.
Since $d_{G}(y,z)=d_{G}(z,x)=3$, we have that $\langle\{v_1,\ldots,v_9\}\rangle$ contains neither $[v_1,v_7]$, $[v_1,v_8]$, $[v_2,v_7]$, $[v_2,v_8]$, $[v_4,v_7]$, $[v_4,v_8]$, $[v_5,v_7]$ nor $[v_5,v_8]$.
Since $G\notin \mathcal{F}_6\cup\mathcal{F}_7\cup\mathcal{F}_8$, $[v_1,v_6]$, $[v_5,v_9]$, $[v_6,v_8]$, $[v_6,v_9]$ and $[v_7,v_9]$ are not contained in $\langle\{v_1,\ldots,v_9\}\rangle$.
Note that $[v_2,v_6]$, $[v_2,v_9]$, $[v_4,v_6]$, $[v_4,v_9]$ may be contained in $\langle\{v_1,\ldots,v_9\}\rangle$.
Hence, $G\in \mathcal{F}_9$.

Therefore, in any case $G\in \mathcal{F}$.

The previous argument also shows that if $G\in \mathcal{F}$, then there is a geodesic triangle with the required properties.
\end{proof}

Theorem \ref{th:equal+3/2} and the following result characterize the graphs for which the bound in Theorem \ref{t:CotaSup} is attained.

\begin{theorem}\label{th:hyp3/2_diam2}
Let $G_1$ be any tree and $G_2$ any non-trivial graph.
\begin{itemize}
\item [$(1)$]\quad If $\diam G_1\ge 3$, then $\d(G_1\circ G_2)=3/2$.

\item [$(2)$]\quad If $1\le \diam G_1\le 2$, then $\d(G_1\circ G_2)=3/2$ if and only if $G_2\in\mathcal{F}$.

\item [$(3)$]\quad If $G_1$ is trivial, then $\d(G_1\circ G_2)=3/2$ if and only if $\d(G_2)=3/2$.
\end{itemize}
\end{theorem}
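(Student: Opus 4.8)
The plan is to dispatch parts $(1)$ and $(3)$ directly from bounds already established, and to obtain $(2)$ by transporting the two characterizing triangles of Lemma~\ref{l:Tcopia} and Lemma~\ref{l:hyp3/2_6-9} through the isomorphism $\{v_0\}\circ G_2\simeq G_2$.

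For $(1)$: since $G_1$ is a tree, its diameter is attained between two vertices (a short check shows midpoints never exceed the vertex diameter in a tree), so $\diam G_1\ge 3$ forces $\diam V(G_1)\ge 3$; then Theorem~\ref{t:Cota 5/4 G1} gives $\d(G_1\circ G_2)\ge 3/2$, while $\d(G_1)=0$ and Theorem~\ref{t:CotaSup} give $\d(G_1\circ G_2)\le 3/2$, whence equality. For $(3)$: by Remark~\ref{r:remark2} the trivial factor yields $G_1\circ G_2\simeq G_2$, so $\d(G_1\circ G_2)=\d(G_2)$ and the equivalence is immediate.

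The heart is $(2)$. Here $\d(G_1)=0$, so Lemma~\ref{l:Tcopia} reduces $\d(G_1\circ G_2)=3/2$ to the existence of a geodesic triangle $T=\{x,y,z\}$ that is a cycle inside some copy $\{v_0\}\circ G_2$, with a vertex $p\in[xy]$ satisfying $d_{G_1\circ G_2}(p,[xz]\cup[zy])=d_{G_1\circ G_2}(p,x)=d_{G_1\circ G_2}(p,y)=3/2$. On the other hand, Lemma~\ref{l:hyp3/2_6-9} reduces $G_2\in\mathcal{F}$ to the existence of a geodesic triangle in $G_2$ of the same shape: a cycle with $x,y,z\in J(G_2)$, all sides of length $\le 3$, and a point $p\in[xy]$ with $\d(T)=3/2=d_{G_2}(p,[yz]\cup[zx])$. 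My plan is to show that, via the natural bijection $\{v_0\}\circ G_2\simeq G_2$, each such triangle produces the other, which forces the two conditions to be equivalent and yields $(2)$.

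The technical core is that the copy $\{v_0\}\circ G_2$ is isomorphic but \emph{not} isometric to $G_2$, since Lemma~\ref{l:dist} caps vertex distances at $2$; all verifications amount to checking that this capping never interferes. I would first record that, in either characterization, $[xy]$ has length exactly $3$ with $x,y$ midpoints and $p$ their (vertex) midpoint; since the endpoints lie in $J(G)$, every side length is a multiple of $1/2$, and an elementary parity remark (a midpoint--vertex distance is a half-integer) shows every side of length exactly $3$ joins two midpoints, while all other sides have length $\le 5/2$. Hence, in the direction $G_2\in\mathcal{F}\Rightarrow\d=3/2$, each side of the $G_2$-triangle becomes a geodesic of $G_1\circ G_2$ by Lemma~\ref{l:geodesicas} (lengths $\le 5/2$) or Remark~\ref{r:midpoints} (length-$3$ sides, midpoint endpoints), so $T$ lifts to a genuine geodesic triangle inside $\{v_0\}\circ G_2$; conversely, a geodesic of $G_1\circ G_2$ lying in $\{v_0\}\circ G_2$ of length $\le 3$ is a geodesic of $G_2$, because Corollary~\ref{c:geod3} forbids $G_2$-distance exceeding $3$ and a subgraph cannot shorten lengths. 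Finally, the value $3/2$ is preserved both ways: using the explicit formula of Lemma~\ref{l:dist}, for a vertex $p$ and any point $q$ of $[xz]\cup[zy]$ one checks that $d_{G_1\circ G_2}(p,q)\ge 3/2$ holds if and only if $d_{G_2}(p,q)\ge 3/2$, since $d_{G_2}(p,q)\ge 3/2$ keeps the relevant vertex distance equal to $1$, below the threshold $2$ at which capping occurs. Once all sides have length $\le 3$, the bound $\d(T)\le 3/2$ is automatic, as any point of a side lies within half the side length $\le 3/2$ of one endpoint, which belongs to the other two sides; this supplies the equality $\d(T)=3/2$ needed for Lemma~\ref{l:hyp3/2_6-9}. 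I expect the main obstacle to be exactly this bookkeeping around the non-isometric embedding: one must verify at every step that the distances that matter sit strictly below the capping threshold, so that $G_2$-distances and $G_1\circ G_2$-distances agree precisely where the argument requires it.
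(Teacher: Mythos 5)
Your proof is correct and follows essentially the same route as the paper's: part $(1)$ via Theorems \ref{t:Cota 5/4 G1} and \ref{t:CotaSup}, part $(3)$ via Remark \ref{r:remark2}, and part $(2)$ by matching the characterizing triangles of Lemma \ref{l:Tcopia} and Lemma \ref{l:hyp3/2_6-9} through the identification $\{v_0\}\circ G_2\simeq G_2$. The only difference is that you make explicit the transfer of geodesics and distances across the non-isometric copy (via Lemma \ref{l:geodesicas}, Remark \ref{r:midpoints} and the distance cap of Lemma \ref{l:dist}), a verification the paper leaves implicit when it applies Lemma \ref{l:hyp3/2_6-9} directly to $\{v\}\circ G_2$.
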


\begin{proof}
If $\diam G_1\ge 3$, then Theorems \ref{t:Cota 5/4 G1} and \ref{t:CotaSup} give the result since that $\d(G_1)=0$.

In order to prove $(2)$, by Lemma \ref{l:Tcopia}, we have that $\d(G_1\circ G_2)=3/2$ if and only if there is a geodesic triangle $T = \{x, y, z\}$ in $G_1\circ G_2$ that is a cycle contained in $\{v\} \circ G_2$ for some $v\in V(G_1)$ with $x, y, z \in J(\{v\} \circ G_2)$ and a vertex $p \in [xy]$ such that $d _{G_1\circ G_2} (p, [xz] \cup [zy])= d _{G_1\circ G_2} (p, x)= d _{G_1\circ G_2} (p, y)= 3/2$.
By Lemma \ref{l:dist}, $\diam V(G_1\circ G_2)=2$, hence, $L([yz]),L([zx])\le3$ and $x,y$ are midpoints with $L([xy])=3$.
Hence, by Lemma \ref{l:hyp3/2_6-9} we have that $\d(G_1\circ G_2)=3/2$ if and only if $\{v\} \circ G_2\in \mathcal{F}$ and so, Remark \ref{r:remark2} gives that this is equivalent to $G_2\in \mathcal{F}$.

Finally, if $G_1$ is trivial, then Remark \ref{r:remark2} gives the result.
\end{proof}

The following result allows to compute, in a simple way, the hyperbolicity constant of the lexicographic product of any tree and any graph.

\begin{theorem}\label{t:tree_graph}
Let $G_1$ be any tree and $G_2$ any graph. Then
 \[\d(G_1\circ G_2)=\left\{
\begin{array}{ll}
\d(G_2), \quad &\mbox{if }\ G_1\simeq E_1,\\
0, \quad &\mbox{if }\ G_2\simeq E_1,\\
1,\quad &\mbox{if }\ \diam G_1=1 \quad  \mbox{and}\ \quad \, 1\le \diam G_2\le2,\\
5/4,\quad &\mbox{if }\ \diam G_1=1\quad  \mbox{and}\ \quad \diam G_2 > 2 \quad  \mbox{and}\ \quad G_2\notin \mathcal{F},\\
5/4,\quad &\mbox{if }\ \diam G_1=2 \quad  \mbox{and}\ \quad \diam G_2\ge 1\quad  \mbox{and}\ \quad G_2\notin \mathcal{F},\\
3/2,\quad &\mbox{if }\ 1\le \diam G_1\le 2 \quad \quad \, \,   \mbox{and}\ \quad G_2\in \mathcal{F},\\
3/2,\quad &\mbox{if }\ \diam G_1\ge 3 \quad  \mbox{and}\ \quad \diam G_2\ge 1.
\end{array}
\right.
\]
\end{theorem}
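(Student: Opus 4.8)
The plan is to treat the seven cases by reducing each to the structural results established above, after disposing of two degenerate situations. If $G_1\simeq E_1$, then $G_1\circ G_2\simeq G_2$ by Remark \ref{r:remark2}, so $\d(G_1\circ G_2)=\d(G_2)$; if instead $G_2\simeq E_1$, then $G_1\circ G_2\simeq G_1$ is a tree and $\d(G_1\circ G_2)=0$. In every remaining case both factors are non-trivial, so $\diam G_1\ge 1$, $\diam G_2\ge 1$ and, since $G_1$ is a tree, $\d(G_1)=0$. By Theorem \ref{t:Caracterizacion 1} the product $G_1\circ G_2$ is hyperbolic, so Theorem \ref{t:multk/4} shows $\d(G_1\circ G_2)\in\tfrac14\mathbb{Z}$, and Theorem \ref{t:InfSup} gives $0\le\d(G_1\circ G_2)\le\tfrac32$.

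The two cases with value $3/2$ are immediate: Theorem \ref{th:hyp3/2_diam2}$(1)$ covers $\diam G_1\ge 3$, and Theorem \ref{th:hyp3/2_diam2}$(2)$ covers $1\le\diam G_1\le 2$ with $G_2\in\mathcal{F}$. For the two cases with value $5/4$ I would pair a lower with an upper bound. When $\diam G_1=2$, a tree satisfies $\diam V(G_1)=2$, so Theorem \ref{t:Cota 5/4 G1} yields $\d\ge 5/4$; when $\diam G_1=1$ and $\diam G_2>2$, Theorem \ref{t:Cota 5/4 G2} yields $\d\ge 5/4$. In both of these cases $G_2\notin\mathcal{F}$ is assumed, so Theorem \ref{th:hyp3/2_diam2}$(2)$ gives $\d\neq 3/2$; combined with $\d\le\tfrac32$ and $\d\in\tfrac14\mathbb{Z}$ this forces $\d\le\tfrac54$, hence $\d=\tfrac54$.

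The remaining and most delicate case is $\diam G_1=1$ and $1\le\diam G_2\le 2$, where the claim is $\d(G_1\circ G_2)=1$. The lower bound $\d\ge 1$ is Theorem \ref{t:Cota 1}. For the upper bound the key step is to prove $\diam(G_1\circ G_2)\le 2$ as a \emph{metric space} (not merely $\diam V\le 2$), after which Theorem \ref{t:diameter1} gives $\d\le\tfrac12\diam(G_1\circ G_2)\le 1$. By Lemma \ref{l:dist} all vertex--vertex distances are at most $2$, and the metric diameter is governed by the distances $1+\min_{i,j}d(a_i,b_j)$ between midpoints of two edges $e_1=[a_1,a_2]$, $e_2=[b_1,b_2]$; thus I must show $\min_{i,j}d(a_i,b_j)\le 1$. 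Arguing by contradiction, suppose all four distances equal $2$. Since $\diam G_1=1$, Lemma \ref{l:dist} forbids distance $2$ between points with different $G_1$-coordinates, so all of $a_1,a_2,b_1,b_2$ share one coordinate $u_0$; hence $e_1,e_2$ are edges of the isometric copy $\{u_0\}\circ G_2\cong G_2$ with all endpoint distances equal to $2$ in $G_2$, which produces two points of $G_2$ at distance $3>2$ and contradicts $\diam G_2\le 2$. Therefore $\min_{i,j}d(a_i,b_j)\le 1$, every distance in $G_1\circ G_2$ is at most $2$, and $\d(G_1\circ G_2)=1$.

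Finally I would verify the cases are exhaustive and non-overlapping. The only point needing care is that membership $G_2\in\mathcal{F}$ forces $\diam G_2\ge 3$ --- the extremal triangle of Lemma \ref{l:hyp3/2_6-9} has a side of length $3$ --- so the hypothesis $\diam G_2\le 2$ of the value-$1$ case automatically excludes $G_2\in\mathcal{F}$ and is consistent with $\d\neq\tfrac32$. With this, the three constants $1,\tfrac54,\tfrac32$ are assigned unambiguously. The main obstacle is the metric-diameter computation in the value-$1$ case, where one must be careful to bound $\diam(G_1\circ G_2)$ rather than $\diam V(G_1\circ G_2)$; the rest is an organized appeal to the cited results.
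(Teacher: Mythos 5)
Your case decomposition, the degenerate cases, the two $3/2$ cases and the two $5/4$ cases all track the paper's own proof: lower bounds from Theorems \ref{t:Cota 1}, \ref{t:Cota 5/4 G1} and \ref{t:Cota 5/4 G2}, upper bounds from Theorems \ref{t:CotaSup} and \ref{t:multk/4} together with the exclusion of $3/2$ via Theorem \ref{th:hyp3/2_diam2}. The problem is the value-$1$ case. There the paper takes an extremal geodesic triangle from Theorem \ref{t:TrianVMp}, observes that a side of length at least $5/2$ forces $x,y$ into a single copy $\{v\}\circ G_2$, and contradicts $\diam G_2\le 2$; you instead invoke $\d\le\frac12\diam(G_1\circ G_2)$ and claim that the metric diameter of a graph is governed by vertex--vertex distances together with the midpoint--midpoint distances $1+\min_{i,j}d(a_i,b_j)$. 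That reduction is false. In $C_5$ every vertex--vertex distance is at most $2$ and every midpoint--midpoint distance is at most $2$ (for any two edges of $C_5$ some pair of endpoints is at distance at most $1$), yet $\diam C_5=5/2$, attained between a vertex and the midpoint of the opposite edge. In general the supremum of $d(x,y)$ over $x\in e_1$, $y\in e_2$ equals $\min\bigl\{1+\frac{A+D}{2},\,1+\frac{B+C}{2}\bigr\}$, where $A,B,C,D$ are the four endpoint distances, and this can strictly exceed $1+\min\{A,B,C,D\}$.

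Consequently your bound $\diam(G_1\circ G_2)\le2$ is not established as written: you must also exclude, for instance, a vertex $x$ at distance $2$ from both endpoints $b_1,b_2$ of an edge, which would place $x$ at distance $5/2$ from its midpoint. The gap is repairable by the same device you already use: if $d(x,b_1)=d(x,b_2)=2$, then (since $\diam G_1=1$ forces distance exactly $1$ between vertices with distinct $G_1$-coordinates, by Lemma \ref{l:dist}) all of $x,b_1,b_2$ lie in one copy $\{u_0\}\circ G_2$, so $d_{G_2}(x',b_i')\ge2$ and $G_2$ contains a point at distance at least $5/2$ from $x'$, contradicting $\diam G_2\le2$; a similar check handles arbitrary pairs of interior points. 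A minor further quibble: $\{u_0\}\circ G_2$ is not an isometric copy of $G_2$ inside $G_1\circ G_2$ (distances there are $\min\{2,d_{G_2}\}$), although your argument only needs the implication $d_{G_1\circ G_2}=2\Rightarrow d_{G_2}\ge2$, which is correct. With these repairs your alternative route through Theorem \ref{t:diameter1} does work, but the key step as stated would fail.
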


\begin{proof}
If $G_1\simeq E_1$ or $G_2\simeq E_1$, then we have the result by Remark \ref {r:remark2}.

If $\diam G_1=1$ and $1\le \diam G_2 \le 2$, then Theorems \ref{t:multk/4}, \ref{t:Cota 1}, \ref{t:CotaSup} and  \ref{th:hyp3/2_diam2} give $\d(G_1\circ G_2)\in \{1,5/4\}$ since  $G_2\notin \mathcal{F}$.
Seeking for a contradiction we can assume that $\d(G_1\circ G_2)=5/4$. Then by Theorem \ref{t:TrianVMp}  there is a geodesic triangle $T = \{x, y, z\}$ in $G_1\circ G_2$ that is a cycle with $x,y,z\in J(G_1\circ G_2)$  and a point $p\in [xy]$ such that $\d(T)=d_{G_1\circ G_2}(p,[yz]\cup [zx])=5/4$. Thus, $d _{G_1\circ G_2} (p, \{x,y\})\ge d _{G_1\circ G_2} (p, [xz] \cup [zy])= 5/4$, $L([xy])\ge 5/2$ and $x,y\in \{v\}\circ G_2$ for some $v\in V(G_1)$ since $\diam G_1=1$. This is a contradiction since $\diam G_2\le2$ and we conclude that $\d(G_1\circ G_2) = 1$.

If  $\diam G_1=1$ and $\diam G_2 > 2$ or $\diam G_1=2$ and $\diam G_2\ge 1$, then Theorems \ref{t:multk/4}, \ref {t:Cota 5/4 G1}, \ref {t:Cota 5/4 G2} and \ref {t:CotaSup} give $\d(G_1\circ G_2)\in \{5/4,3/2\}$.
Finally, since $G_2\notin \mathcal{F}$, Theorem \ref{th:hyp3/2_diam2} gives $\d(G_1\circ G_2)\neq 3/2$ and we have $\d(G_1\circ G_2)=5/4$.

If $1\le\diam G_1\le2$ and $G_2\in \mathcal{F}$ or $\diam G_1\ge 3$ and $\diam G_2\ge 1$, then we have the result by Theorem \ref {th:hyp3/2_diam2}.
\end{proof}

\begin{corollary}\label{c:trees}
Let $G_1,G_2$ be any trees. Then
 \[\d(G_1\circ G_2)=\left\{
\begin{array}{ll}
0, \quad &\mbox{if }\ G_1\simeq E_1 \quad \quad \quad \mbox{or}\ \quad \, \, \, G_2\simeq E_1,\\
1,\quad &\mbox{if }\ \diam G_1=1 \quad  \mbox{and}\ \quad 1\le \diam G_2\le 2,\\
5/4,\quad &\mbox{if }\ \diam G_1=1\quad  \mbox{and}\ \quad \diam G_2\ge 3 ,\\
5/4,\quad &\mbox{if }\ \diam G_1=2 \quad  \mbox{and}\ \quad \diam G_2\ge 1,\\
3/2,\quad &\mbox{if }\ \diam G_1\ge 3 \quad  \mbox{and}\ \quad \diam G_2\ge 1.
\end{array}
\right.
\]
\end{corollary}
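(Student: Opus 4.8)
The plan is to deduce this corollary directly from Theorem \ref{t:tree_graph}, which already computes $\delta(G_1\circ G_2)$ for an arbitrary tree $G_1$ and an \emph{arbitrary} graph $G_2$. Specializing to the case in which $G_2$ is also a tree, three observations collapse the seven cases of Theorem \ref{t:tree_graph} into the five cases stated here.

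First I would establish the decisive point: \emph{no tree belongs to $\mathcal{F}$}. Indeed, by definition every graph in $\mathcal{F}$ contains, as an induced subgraph, a graph isomorphic to some element of $\mathcal{C}_6^{(1)}\cup\mathcal{C}_7^{(1)}\cup\mathcal{C}_8^{(1)}\cup\mathcal{C}_8^{(2)}\cup\mathcal{C}_9^{(1)}$, and each such element is obtained from a cycle $C_n$ (with $6\le n\le 9$) by adding edges; in particular it contains a cycle. Since a tree is acyclic, a tree cannot contain any of these graphs as a subgraph, so $G_2\notin\mathcal{F}$. This immediately removes the case ``$1\le\diam G_1\le2$ and $G_2\in\mathcal{F}$'' of Theorem \ref{t:tree_graph} and makes the hypothesis $G_2\notin\mathcal{F}$ automatic in the two cases where it appears.

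Next I would record two elementary facts about the tree $G_2$. Since $G_2$ is a tree it is a metric tree, hence $\delta(G_2)=0$; this converts the case $G_1\simeq E_1$ of Theorem \ref{t:tree_graph} (which yields $\delta(G_2)$) into the value $0$, matching the first line of the corollary together with the case $G_2\simeq E_1$. Moreover, the diameter of a tree is attained between two of its leaves, which are vertices, so $\diam G_2$ is an integer; consequently $\diam G_2>2$ is equivalent to $\diam G_2\ge 3$, which is exactly the reformulation used in the third line of the corollary.

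With these three observations in hand, the remaining step is routine bookkeeping: I would match the surviving cases of Theorem \ref{t:tree_graph} line by line against the statement. The cases $\diam G_1=1$ with $1\le\diam G_2\le 2$, $\diam G_1=1$ with $\diam G_2\ge 3$, $\diam G_1=2$ with $\diam G_2\ge 1$, and $\diam G_1\ge 3$ with $\diam G_2\ge 1$ then give the values $1$, $5/4$, $5/4$ and $3/2$, respectively, and these cases together with $G_1\simeq E_1$ or $G_2\simeq E_1$ exhaust all pairs of trees (when neither factor is trivial both diameters are at least $1$). There is no genuine obstacle here; the only point requiring care is the verification that every element of $\mathcal{F}$ contains a cycle, which is what forces a tree to lie outside $\mathcal{F}$ and thereby eliminates the value $3/2$ for small diameters of $G_1$.
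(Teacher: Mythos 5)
Your proposal is correct and follows exactly the route the paper intends: the corollary is stated without proof as an immediate specialization of Theorem \ref{t:tree_graph}, and your three observations (a tree contains no cycle, hence lies outside $\mathcal{F}$; $\d(G_2)=0$ for a tree; the diameter of a tree is an integer, so $\diam G_2>2$ is equivalent to $\diam G_2\ge 3$) are precisely what is needed to collapse the cases of that theorem into the five lines of the corollary.
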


\begin{corollary}\label{c:Pn_Pm}
Let $P_n, P_m$ be two path graphs. Then
 \[\d(P_n\circ P_m)=\left\{
\begin{array}{ll}
0, \quad &\mbox{if }\ n=1 \quad  \mbox{or}\ \quad \, \, \, \, m=1,\\
1,\quad &\mbox{if }\ n=2 \quad  \mbox{and}\ \quad m=2,3,\\
5/4,\quad &\mbox{if }\ n=2\quad  \mbox{and}\ \quad m\ge 4 \quad \mbox{or}\ \quad n=3 \quad  \mbox{and}\ \quad m\ge 2,\\
3/2,\quad &\mbox{if }\ n\ge 4 \quad  \mbox{and}\ \quad m\ge 2.
\end{array}
\right.
\]
\end{corollary}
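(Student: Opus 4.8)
The plan is to derive this result as a direct specialization of Theorem \ref{t:tree_graph}, since every path graph $P_n$ is a tree. First I would record the elementary fact that $\diam P_k = k-1$ for every $k \ge 1$, so that the diameter hypotheses of Theorem \ref{t:tree_graph} translate into conditions on $n$ and $m$: the trivial graph $E_1$ corresponds to $P_1$; $\diam P_n = 1$ corresponds to $n = 2$; $\diam P_n = 2$ corresponds to $n = 3$; and $\diam P_n \ge 3$ corresponds to $n \ge 4$ (and analogously for $P_m$).

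The crucial observation, which unlocks every case, is that $P_m \notin \mathcal{F}$ for all $m$. Indeed, each graph in $\mathcal{C}_6^{(1)} \cup \mathcal{C}_7^{(1)} \cup \mathcal{C}_8^{(1)} \cup \mathcal{C}_8^{(2)} \cup \mathcal{C}_9^{(1)}$ is obtained from a cycle $C_j$ (with $6 \le j \le 9$) by \emph{adding} edges, hence it contains a cycle as a subgraph. Membership in $\mathcal{F}$ therefore forces the presence of a cycle. Since $P_m$ is a tree it is acyclic, and every induced subgraph of $P_m$ is a disjoint union of paths; none of these can be isomorphic to an element of any $\mathcal{C}_j^{(i)}$. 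Thus $P_m \notin \mathcal{F}$, so the branch $G_2 \in \mathcal{F}$ of Theorem \ref{t:tree_graph} never occurs, while the branches requiring $G_2 \notin \mathcal{F}$ apply whenever their diameter hypotheses hold.

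With these two facts in hand, I would match each branch of the corollary against the corresponding branch of Theorem \ref{t:tree_graph}: the case $n = 1$ or $m = 1$ gives $\d = 0$ (first two lines, using $\d(P_m) = 0$ since paths are trees); the case $n = 2$ with $m \in \{2,3\}$ gives $\d = 1$ (third line, $\diam P_n = 1$ and $1 \le \diam P_m \le 2$); the cases $n = 2$ with $m \ge 4$ and $n = 3$ with $m \ge 2$ both give $\d = 5/4$ (fourth and fifth lines, invoking $P_m \notin \mathcal{F}$); and the case $n \ge 4$ with $m \ge 2$ gives $\d = 3/2$ (last line, $\diam P_n \ge 3$). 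Because $P_m \notin \mathcal{F}$, the sixth line of the theorem contributes nothing here.

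The bookkeeping is entirely mechanical once the diameters are computed; the only step requiring any thought is verifying that no path lies in $\mathcal{F}$, and even this is immediate from the acyclicity of trees. I therefore expect no genuine obstacle beyond presenting the case-matching cleanly.
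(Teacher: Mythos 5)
Your proposal is correct and takes essentially the same route as the paper, which reads the corollary off from Theorem \ref{t:tree_graph} (via the intermediate Corollary \ref{c:trees} for trees). The one substantive point --- that no path graph belongs to $\mathcal{F}$ --- you justify correctly by noting that every member of $\mathcal{F}$ contains a cycle while paths are acyclic.
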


\end{document}